\newcommand{\vertiii}[1]{{\left\vert\kern-0.25ex\left\vert\kern-0.25ex\left\vert #1 
    \right\vert\kern-0.25ex\right\vert\kern-0.25ex\right\vert}}
\newtheorem{theorem}{Theorem}
\newtheorem{definition}{Definition}
\newtheorem{lemma}{Lemma}
\newtheorem{proposition}{Proposition}
\newtheorem{remark}{ Remark}
\begin{document}
\title{Cazenave-Dickstein-Weissler-type extension of Fujita's problem on Heisenberg groups}

\author{Mokhtar Kirane \footnote{\noindent Department of Mathematics, Faculty of Arts and Science, Khalifa University, P.O. Box: 127788,  Abu Dhabi, UAE; Email: mokhtar.kirane@ku.ac.ae
 \newline \indent $\,\,{}^{a}$Ahmad Z. Fino, College of Engineering and Technology, American University of the Middle East, Kuwait; Email: ahmad.fino@aum.edu.kw
   \newline \indent $\,\,{}^{b}$ Berikbol T. Torebek, Department of Mathematics: Analysis, Logic and Discrete Mathematics, Ghent University, Belgium. 
   \newline \indent $\,\,\,$ Institute of Mathematics and Mathematical Modeling, 125 Pushkin str., Almaty 050010, Kazakhstan; Email: berikbol.torebek@ugent.be
     \newline \indent $\,\,{}^{c}$ Zineb Sabbagh, Department of Mathematics, 
University Saad Dahleb, Blida, Algeria; Email: szinebedp@gmail.com
 }, Ahmad Z. Fino$^{\,a}$, Berikbol T. Torebek$^{\,b}$, Zineb Sabbagh$^{\,c}$}

\date{}
\maketitle

\begin{abstract} 
This paper examines the critical exponents for the existence of global solutions to the equation
\begin{equation*}
\begin{array}{ll}
\displaystyle u_t-\Delta_{\mathbb{H}}u=\int_0^t(t-s)^{-\gamma}|u(s)|^{p-1}u(s)\,ds,&\qquad 0\leq\gamma<1,\,\,\, {\eta\in \mathbb{H}^n,\,\,\,t>0,}
 \end{array}
\end{equation*} on the Heisenberg groups $\mathbb{H}^n.$ There exists a critical exponent $$p_c= \max\Big\{\frac{1}{\gamma},\frac{n+2}{n+\gamma}\Big\}\in(0,+\infty],$$ such that for all $1<p\leq p_c,$ no global solution exists regardless of the non-negative initial data, while for $p>p_c$, a global positive solution exists if the initial data is sufficiently small. The results obtained are a natural extension of the results of Cazenave et al. [Nonlinear Analysis 68 (2008), 862-874], where similar studies were carried out in $\mathbb{R}^n$. Furthermore, several theorems are presented that provide lifespan estimates for local solutions under various initial data conditions. The proofs of the main results are based on test function methods and Banach fixed point principle.
\end{abstract}

\noindent {\small {\bf MSC[2020]:} 35A01; 35R03; 35B53; 35K65; 35B33} 

\noindent {\small {\bf Keywords:} Critical exponents, semilinear parabolic equations, Heisenberg group.}


\section{Introduction}

In this paper, we examines the global existence of solutions of the following problem
\begin{equation}\label{1}
\begin{array}{ll}
\displaystyle u_t-\Delta_{\mathbb{H}}u=\int_0^t(t-s)^{-\gamma}|u(s)|^{p-1}u(s)\,ds,&\qquad {\eta\in \mathbb{H}^n,\,\,\,t>0,}
 \\{}\\ u(0,\eta)=u_{0}(\eta), &\qquad \eta\in \mathbb{H}^n,
 \end{array}
\end{equation}
where $n\geq1$, $p>1$, $0\leq \gamma<1$, $u_0\in C_0(\mathbb{H}^n)$, $\Delta_{\mathbb{H}}$ is the Heisenberg Laplacian, and  $C_0(\mathbb{H}^n)$ denotes the space of all continuous functions tending to zero at infinity. 

The problem \eqref{1} is the setting of the semilinear heat equation with nonlocal nonlinearity
\begin{equation}\label{2}
\begin{array}{ll}
\displaystyle u_t-\Delta u=\int_0^t(t-s)^{-\gamma}|u(s)|^{p-1}u(s)\,ds,&\qquad {x\in \mathbb{R}^n,\,\,\,t>0,}\end{array}
\end{equation} which Cazenave et al. \cite{CDW} studied in the Euclidean space $\mathbb{R}^n$, on the Heisenberg group. Here $\Delta$ is a Laplacian in $\mathbb{R}^n$.
In particular, in \cite{CDW} it is proved that the critical exponent ensuring global existence of solutions to equation \eqref{2} is given by 
$$p_{\mathrm{crit}}=\max\Big\{\frac{1}{\gamma},1+\frac{2(2-\gamma)}{(n-2+2\gamma)_+}\Big\}\in(0,+\infty],$$
 i.e., it has been proven that: if $p \leq p_{\mathrm{crit}}$, and $u_0$ is nonnegative, then $u$ blows up in finite time, while there exist a global positive solution for sufficiently small $u_0\geq 0$, if $p>p_{\mathrm{crit}}.$

This in turn coincides with Fujita's pioneering results \cite{Fujita}, where a critical exponent of the form $$p_F=1+\frac{2}{n}$$ 
was established for the classical semilinear heat equation
 \begin{equation}\label{3}
 u_t=\Delta u +u^p,\qquad \text{in}\,\,\mathbb{R}^n.
 \end{equation}
Notice that when $\gamma\to1$, using the relation
$$\lim_{\gamma\to1}c_\gamma\,s_+^{-\gamma}=\delta_0(s)\quad\text{in distributional sense with}\,\,s_+^{-\gamma}:=\left\{\begin{array}{ll}
s^{-\gamma}&\,\,\text{if}\,\,s>0,\\
0&\,\,\text{if}\,\,s<0,
\end{array}
\right.
$$
where $c_\gamma=1/\Gamma(1-\gamma)$, along with a suitable change of variables, we see that problem \eqref{2} reduces to the semilinear heat equation \eqref{3}. Thus, the critical exponent $\lim\limits_{\gamma\rightarrow1}p_{\mathrm{crit}}=1+\frac{2}{n}=p_F$ for problem \eqref{2} matches the Fujita critical exponent for problem \eqref{3}.

To our knowledge, Fujita-type results on Heisenberg groups $\mathbb{H}^n$ were first obtained by Zhang \cite{Zhang} and Pascucci \cite{Pascucci}. In the aforementioned studies, the critical Fujita exponent $p_c=1+\frac{2}{Q}$ was derived, where $Q=2n+2$ is the homogeneous dimension of the Heisenberg groups $\mathbb{H}^n$.

Further extensions of Fujita-type results on the Heisenberg group have been obtained by many authors (see for example \cite{Kirane2, Torebek, D'Ambrosio, FRT, Georgiev, Kirane3, Kirane4, Kirane5, Pohozaev} and the references therein). More specifically, the first two authors of this paper, together with their collaborators \cite{FKBK}, have recently examined a problem even closer to \eqref{1}: They analyzed Fujita-type results for semilinear heat inequality with convolution-type nonlinearity:
\begin{align*}
u_t-\Delta_{\mathbb{H}}u\geq \left(\mathcal{K}*_{\mathbb{H}}|u|^{p}\right)|u|^q,&\qquad {\eta\in \mathbb{H}^n,\,\,\,t>0,}
\end{align*} where $\mathcal{K}:(0,\infty)\rightarrow(0,\infty)$ is a continuous function satisfying $\mathcal{K}\left(|\cdot|_{\mathbb{H}}\right)\in L^1_{loc}(\mathbb{H}^n)$ which decreases in a vicinity of infinity, and $*_{\mathbb{H}}$ denotes the convolution operation in $\mathbb{H}^n.$
In particular, for $\mathcal{K}\left(r\right) = r^{-\alpha},\,\alpha\in(0,Q)$, it has been shown that no global solutions exist whenever $$\int_{\mathbb{H}^n}u_0(\eta)d\eta>0\,\,\,\,\text{and}\,\,\,\,2<p + q <\frac{2(Q+1)}{Q+\alpha}.$$
To the best of our knowledge, no Fujita-type analyses have yet been carried out for semilinear heat equations with time-nonlocal nonlinearities; this gap in the literature prompts us to investigate the associated Fujita critical exponents for problem \eqref{1}.

For the convenience of the readers, we present the main results of the paper in the subsection below, and then in the following sections we present the proofs of the main results in turn.

\subsection{Main results}
\begin{definition}[Mild solution]${}$\\
Let $u_0\in C_0(\mathbb{H}^n)$, $n\geq1$, $p>1$, and $T>0$. We say that $u\in C([0,T],C_0(\mathbb{H}^n))$
is a mild solution of problem \eqref{1} if $u$ satisfies the following integral equation
\begin{equation}\label{IE}
    u(t,\eta)=S_{\mathbb{H}}(t) u_0(\eta)+\Gamma(\alpha)\int_{0}^tS_{\mathbb{H}}(t-s)I_{0|s}^\alpha(|u|^{p-1}u)(s,\eta)\,ds,\quad \hbox{for all}\,\,\,\eta\in \mathbb{H}^n,\,t\in[0,T],
\end{equation}
where $\alpha=1-\gamma$, $\Gamma$ is the Euler gamma function and $I_{0|s}^\alpha$ is the Riemann-Liouville left-sided fractional integral defined in \eqref{I1} below. More general, for all $t_0\geq 0$, we have
\begin{equation}\label{IEG}
    u(t,\eta)=S_{\mathbb{H}}(t-t_0) u(t_0,\eta)+\Gamma(\alpha)\int_{t_0}^tS_{\mathbb{H}}(t-s)I_{0|s}^\alpha(|u|^{p-1}u)(s,\eta)\,ds,\quad \hbox{for all}\,\,\,\eta\in \mathbb{H}^n,\,t\in[t_0,T].
\end{equation}
If $u$ is a mild solution of \eqref{1} in $[0,T]$ for all $T>0$, then $u$ is called global-in-time mild solution of \eqref{1}.
\end{definition}
First, we establish the existence and uniqueness of a local mild solution.
\begin{theorem}[Local existence]\label{Local}${}$\\
Let $u_0\in C_0(\mathbb{H}^n)$, with $n\geq1$ and $p>1$. Then there exists a maximal
time $T_{\max}>0$ and a unique mild solution $u\in
C([0,T_{\max}),C_0(\mathbb{H}^n))$ to problem \eqref{1}. Moreover, we have the alternative:\\
\begin{itemize} 
\item[$\bullet$] Either $T_{\max}=+\infty$, in which case the solution is global, or
\item[$\bullet$]  $T_{\max}<+\infty$ and 
$$\liminf_{t\rightarrow T_{\max}}\|u\|_{L^\infty((0,t),L^\infty(\mathbb{H}^{n}))}=+\infty,$$
i.e., the solution blows up in finite time.
\end{itemize} 
 In addition, if $u_0\geq0,$ $u_0\not\equiv0,$ then the solution remains strictly positive for all $0<t<T_{\max}.$
 Furthermore, if $u_0\in L^r(\mathbb{H}^n),$ for some $1\leq r<\infty,$ then $u\in C([0,T_{\max}),L^r(\mathbb{H}^n))$.
\end{theorem}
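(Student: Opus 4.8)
The plan is to prove existence and uniqueness by a contraction mapping argument applied to the integral formulation \eqref{IE}, and then to deduce the blow-up alternative, positivity and $L^r$-persistence as consequences. Throughout I will use three standard properties of the Heisenberg heat semigroup $S_{\mathbb{H}}(t)=e^{t\Delta_{\mathbb{H}}}$: it is a strongly continuous contraction semigroup on $C_0(\mathbb{H}^n)$ and on every $L^q(\mathbb{H}^n)$, $1\le q\le\infty$, with $\|S_{\mathbb{H}}(t)f\|_{L^q}\le\|f\|_{L^q}$; it is order preserving and has a strictly positive convolution kernel; and it maps $C_0(\mathbb{H}^n)$ into itself. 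I also record the elementary bound for the Riemann--Liouville integral, $\|I_{0|s}^{\alpha}g(s)\|\le \frac{s^{\alpha}}{\Gamma(\alpha+1)}\sup_{0\le\tau\le s}\|g(\tau)\|$ with $\alpha=1-\gamma\in(0,1]$, valid in any of the norms above.

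Fix $M=2\|u_0\|_{L^\infty}$ and, on the space $X_T=C([0,T],C_0(\mathbb{H}^n))$ with norm $\|u\|_{X_T}=\sup_{t\in[0,T]}\|u(t)\|_{L^\infty}$, let $\Phi$ be the map sending $u$ to the right-hand side of \eqref{IE}. Using the contraction property of $S_{\mathbb{H}}$ and the fractional-integral bound applied to $g=|u|^{p-1}u$ (so that $\sup_\tau\|g(\tau)\|_{L^\infty}\le\|u\|_{X_T}^{p}$), the Duhamel term is controlled by $\Gamma(\alpha)\int_0^t \frac{s^\alpha}{\Gamma(\alpha+1)}\|u\|_{X_T}^p\,ds\le\frac{\Gamma(\alpha)}{\Gamma(\alpha+2)}\,T^{\alpha+1}\|u\|_{X_T}^p$. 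Hence on the ball $B_M=\{\|u\|_{X_T}\le M\}$ one has $\|\Phi u\|_{X_T}\le \|u_0\|_{L^\infty}+CT^{\alpha+1}M^p$, which is $\le M$ once $T$ is small. The local Lipschitz estimate $\big||u|^{p-1}u-|v|^{p-1}v\big|\le p\,(|u|^{p-1}+|v|^{p-1})|u-v|$ gives $\|\Phi u-\Phi v\|_{X_T}\le CT^{\alpha+1}M^{p-1}\|u-v\|_{X_T}$, so $\Phi$ is a contraction on $B_M$ for $T$ small. Banach's fixed point theorem then yields a unique fixed point, i.e. a mild solution; the only point requiring a short separate check is that $\Phi u\in C([0,T],C_0(\mathbb{H}^n))$, which follows from strong continuity of the semigroup and continuity of the integrand. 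Uniqueness on the whole interval (not merely in $B_M$) follows by comparing two solutions via a Gronwall argument built on the same estimates.

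Next I would define $T_{\max}$ as the supremum of existence times and establish the dichotomy by a continuation argument. Here the \emph{main obstacle} is the nonlocal-in-time term: because $I_{0|s}^\alpha$ integrates the history back to $s=0$, one cannot restart the Cauchy problem at a time $t_0<T_{\max}$ using only the state $u(t_0)$, as a Markovian restart for a local equation would. The remedy is to split, for $s\ge t_0$,
\[
I_{0|s}^{\alpha}(|u|^{p-1}u)(s)=\frac{1}{\Gamma(\alpha)}\int_0^{t_0}(s-\tau)^{\alpha-1}|u|^{p-1}u\,d\tau+\frac{1}{\Gamma(\alpha)}\int_{t_0}^{s}(s-\tau)^{\alpha-1}|u|^{p-1}u\,d\tau,
\]
so that equation \eqref{IEG} on $[t_0,t_0+\delta]$ becomes a fixed-point problem whose first summand is a \emph{known} forcing, bounded in terms of $\sup_{0\le\tau\le t_0}\|u(\tau)\|_{L^\infty}$, while the second is the genuine nonlinearity. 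If $T_{\max}<\infty$ but $\liminf_{t\to T_{\max}}\|u\|_{L^\infty((0,t),L^\infty(\mathbb{H}^{n}))}=:K<\infty$, then the forcing and the radius of the ball are bounded uniformly by a constant depending only on $K$, so the existence time $\delta$ produced by the contraction estimate is bounded below independently of $t_0$; choosing $t_0$ close enough to $T_{\max}$ extends the solution beyond $T_{\max}$, a contradiction. Therefore $T_{\max}<\infty$ forces $\liminf_{t\to T_{\max}}\|u\|_{L^\infty((0,t),L^\infty(\mathbb{H}^{n}))}=+\infty$.

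Finally, positivity and $L^r$-persistence. For $u_0\ge0$, $u_0\not\equiv0$, I would run the Picard iteration $u^{(k+1)}=\Phi u^{(k)}$ starting from $u^{(0)}=S_{\mathbb{H}}(\cdot)u_0$: since $S_{\mathbb{H}}$ is order preserving and $|w|^{p-1}w=w^p\ge0$ whenever $w\ge0$, each iterate is nonnegative, and the contraction guarantees $u^{(k)}\to u$, whence $u\ge0$. Strict positivity for $t>0$ then follows from \eqref{IE}, since the Duhamel term is nonnegative and the strictly positive kernel of $S_{\mathbb{H}}(t)$ gives $S_{\mathbb{H}}(t)u_0>0$. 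For the last assertion, assuming $u_0\in L^r$ I would repeat the a priori estimate in the $L^r$ norm, using the contraction of $S_{\mathbb{H}}$ on $L^r$ together with $\||u(\tau)|^{p-1}u(\tau)\|_{L^r}\le\|u(\tau)\|_{L^\infty}^{p-1}\|u(\tau)\|_{L^r}$; since $\|u\|_{L^\infty}$ is already known to be finite on compact subintervals of $[0,T_{\max})$, a Gronwall argument bounds $\|u(t)\|_{L^r}$ locally, and strong continuity of the semigroup upgrades this to $u\in C([0,T_{\max}),L^r(\mathbb{H}^n))$.
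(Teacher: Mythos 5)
Your proposal is correct and follows essentially the same route as the paper: a Banach fixed point argument in a ball of radius $2\|u_0\|_{L^\infty}$ with the identical bound $T^{2-\gamma}/((1-\gamma)(2-\gamma))$ on the Duhamel term, the same splitting of the memory integral $I_{0|s}^\alpha$ into a known history forcing plus a genuine nonlinearity to obtain the continuation/blow-up alternative, the same positivity argument via nonnegativity-preserving iteration and $u\ge S_{\mathbb{H}}(t)u_0>0$, and the same estimate $\||u|^{p-1}u\|_{L^r}\le\|u\|_{L^\infty}^{p-1}\|u\|_{L^r}$ for $L^r$-persistence. The only cosmetic differences (working directly in $C([0,T],C_0(\mathbb{H}^n))$ rather than $L^\infty((0,T),C_0(\mathbb{H}^n))$, and a Gronwall a priori bound in place of a second fixed point in $E_{T,r}$) do not change the substance.
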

\begin{remark}
Theorem \ref{Local} remains valid for a more general source term, provided that $|u|^{p-1}u$ is replaced by an arbitrary locally Lipschitz function $f(u)$.
\end{remark}
Next, we establish conditions that guarantee the global existence of mild solutions and, conversely, delineate the parameter regimes in which these solutions undergo finite-time blow-up. For the formulation of the upcoming theorems, recall that $Q=2n+2$ denotes the homogeneous dimension of the Heisenberg group.
\begin{theorem}\label{Blowup-global} Let $0\leq\gamma<1$, $p>1$, and set
\[p_c= \max\Big\{\frac{1}{\gamma},p_\gamma\Big\}\in(0,+\infty],\qquad\hbox{with}\quad p_\gamma=1+\frac{2(2-\gamma)}{Q-2+2\gamma}.\]
Let $u_0\in C_0(\mathbb{H}^n)$, and let $u\in
C([0,T_{\max}),C_0(\mathbb{H}^n))$ be the corresponding solution of \eqref{1}.  If $\gamma=0$, then all nontrivial positive solutions blow up. If $0<\gamma<1$, we have the following:
\begin{enumerate}
\item[$(\mathrm{i})$] If
 \begin{equation}\label{esti1}
    p\leq p_c,
\end{equation}
and $u_0\geq0$, $u_0\not\equiv 0$, then the mild solution $u$ of \eqref{1} blows up in finite time.\\
\item[$(\mathrm{ii})$] If
 \begin{equation}\label{esti2}
    p> p_c,
\end{equation}
and $u_0\in L^{q_{\mathrm{sc}}}(\mathbb{H}^n)$, where $q_{\mathrm{sc}}:=Q(p-1)/2(2-\gamma)$ is the scaling-related exponent, then the solution $u$ exists
globally in time, provided that $\|u_0\|_{L^{q_{\mathrm{sc}}}}$ is sufficiently small.
\end{enumerate}
\end{theorem}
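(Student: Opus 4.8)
The plan is to treat the two assertions by different devices, exactly as announced in the abstract: the blow-up statement $(\mathrm{i})$ rests on the rescaled test function (nonlinear capacity) method combined with fractional integration by parts in time, while the global existence statement $(\mathrm{ii})$ is obtained through a contraction argument built on the parabolic $L^r$--$L^q$ smoothing of $S_{\mathbb{H}}$. Throughout I would rewrite the source term as $\Gamma(\alpha)I^{\alpha}_{0|t}(u^{p})$, which is legitimate once positivity of $u$ is known from Theorem \ref{Local}, so that \eqref{1} reads $u_t-\Delta_{\mathbb{H}}u=\Gamma(\alpha)I^{\alpha}_{0|t}(u^{p})$. I would keep in mind that $p_c=\max\{1/\gamma,p_\gamma\}$ reflects two competing blow-up mechanisms: a purely time-nonlocal (memory) mechanism producing the threshold $1/\gamma$, and a diffusion-versus-reaction (Fujita) mechanism producing $p_\gamma$. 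The degenerate case $\gamma=0$, where $p_c=+\infty$, falls out of the memory mechanism, since $\int_0^t u^{p}\,ds$ is then nondecreasing in $t$ and the reaction always overwhelms the diffusion.

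For $(\mathrm{i})$ I would test the weak form of \eqref{1} against $\varphi(t,\eta)=\Phi(t/T)^{\ell}\Psi(|\eta|_{\mathbb{H}}/R)^{\ell}$, with $\Phi,\Psi$ standard cut-offs and $\ell$ large. The essential algebraic step is the duality $\int_0^T(I^{\alpha}_{0|t}u^{p})\varphi\,dt=\int_0^T u^{p}\,(I^{\alpha}_{t|T}\varphi)\,dt$ for the right-sided fractional integral $I^{\alpha}_{t|T}$, together with the explicit identity $I^{\alpha}_{t|T}[(1-t/T)^{m}]=c\,T^{\alpha}(1-t/T)^{m+\alpha}$. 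After integrating by parts in space and time one reaches
\[\Gamma(\alpha)\int_{0}^{T}\!\!\int_{\mathbb{H}^n} u^{p}\,\bigl(I^{\alpha}_{t|T}\varphi\bigr)\,d\eta\,dt+\int_{\mathbb{H}^n}u_0\,\varphi(0,\cdot)\,d\eta\le \int_{0}^{T}\!\!\int_{\mathbb{H}^n} u\,\bigl(|\varphi_t|+|\Delta_{\mathbb{H}}\varphi|\bigr)\,d\eta\,dt,\]
and a H\"older--Young splitting weighted by $I^{\alpha}_{t|T}\varphi$ absorbs the nonlinear term, leaving $\int u_0\,\varphi(0,\cdot)\le C\,\mathcal{I}(T,R)$. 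Using that $\Delta_{\mathbb{H}}$ is $\delta_\lambda$-homogeneous of degree $2$ and that $|B_R|\sim R^{Q}$, a balance of the space--time scales in $\mathcal{I}(T,R)$ produces a Fujita-type threshold, whereas fixing the spatial scale and letting only the time horizon grow isolates the memory threshold $1/\gamma$ through a Volterra-type inequality for $t\mapsto\int_{\mathbb{H}^n}u\,\psi$. Since $u_0\ge0$, $u_0\not\equiv0$ forces $\int u_0\,\varphi(0,\cdot)\ge c_0>0$ for $R$ large, so letting $R\to\infty$ in the subcritical range gives $\mathcal{I}(T,R)\to0$ and the contradiction.

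For $(\mathrm{ii})$ I would set up the fixed point for $\mathcal{F}(u)=S_{\mathbb{H}}(t)u_0+\Gamma(\alpha)\int_0^t S_{\mathbb{H}}(t-s)I^{\alpha}_{0|s}(|u|^{p-1}u)\,ds$ in a scaling-invariant Kato-type space combining the critical norm $L^{q_{\mathrm{sc}}}$ with a decaying subcritical norm, e.g. $\|u\|_X=\sup_{t>0}\|u(t)\|_{L^{q_{\mathrm{sc}}}}+\sup_{t>0}t^{\delta}\|u(t)\|_{L^{\rho}}$ with $\rho>q_{\mathrm{sc}}$ and $\delta$ dictated by the self-similar exponent $\theta=2(2-\gamma)/(p-1)$. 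Estimating $\mathcal{F}$ uses $\|S_{\mathbb{H}}(t)f\|_{L^q}\le C t^{-\frac{Q}{2}(\frac1r-\frac1q)}\|f\|_{L^r}$ and $\||u|^{p-1}u\|_{L^{q/p}}=\|u\|_{L^q}^{p}$; the two time integrations, the memory kernel $(s-\tau)^{\alpha-1}$ and the heat factor, are composed and evaluated by Beta functions. Convergence of these integrals at their two endpoints encodes precisely the conditions $p>1/\gamma$ and $p>p_\gamma$, and once $\|u_0\|_{L^{q_{\mathrm{sc}}}}$ is small a contraction on a small ball of $X$ yields a global mild solution.

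The hardest part will be pinning down the \emph{sharp} Fujita exponent $p_\gamma=1+\frac{2(2-\gamma)}{Q-2+2\gamma}$, whose denominator $Q-2+2\gamma=Q-2\alpha$ encodes the extra time-regularisation of the Riemann--Liouville integral and is not produced by a naive parabolic scaling (which yields only the cruder exponent $1+2(2-\gamma)/Q$). Reaching $p_\gamma$ seems to require a delicate optimisation coupling the time power $m$ in $\Phi$ to $\alpha$ and $p$, plausibly supplemented by a Duhamel lower bound $u(t)\gtrsim S_{\mathbb{H}}(t)u_0$ fed back into the memory term to sharpen $\mathcal{I}(T,R)$. The two remaining delicate points are the critical case $p=p_c$, where the strict decay of $\mathcal{I}(T,R)$ degenerates and a borderline (logarithmic) refinement of the cut-offs is needed, and the bookkeeping of the composed fractional--parabolic time integrals, which must converge simultaneously in the two conjugate roles they play on the blow-up and existence sides.
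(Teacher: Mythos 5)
Your plan for part (ii) is essentially the paper's: a contraction in a Kato-type weighted space built on the $L^p$--$L^q$ smoothing of $S_{\mathbb{H}}$, with the Beta-function evaluation of the composed memory/heat time integrals encoding $p>1/\gamma$ and $p>p_\gamma$ (the paper uses the single norm $\sup_t t^\beta\|u(t)\|_{L^q}$ rather than a two-norm space, but this is cosmetic). The blow-up part, however, has a genuine gap, and it sits exactly where you flag uncertainty. With your test function $\varphi(t,\eta)=\Phi(t/T)^{\ell}\Psi^{\ell}$ and the duality $\int_0^T (I^{\alpha}_{0|t}u^p)\varphi\,dt=\int_0^T u^p\,(I^{\alpha}_{t|T}\varphi)\,dt$, the nonlinear term is weighted by $I^{\alpha}_{t|T}\varphi\sim T^{\alpha}\Phi^{\ell+\alpha}\Psi^{\ell}$, so the Young absorption of $\int u\,|\varphi_t|$ costs $(I^{\alpha}_{t|T}\varphi)^{-(p'-1)}|\varphi_t|^{p'}\sim T^{\,1-p'-\alpha(p'-1)}R^{Q/2}$. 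Balancing against the $\Delta_{\mathbb{H}}\varphi$ term and optimizing the scales, the resulting nonexistence condition is $p'+\alpha(p'-1)>1+Q/2$, i.e.\ $p<1+2(2-\gamma)/Q=p_{\mathrm{sc}}$ --- only the scaling exponent, strictly below $p_\gamma$. This exponent does not depend on the cut-off power $m$ (or $\ell$), so the ``delicate optimisation of $m$'' you propose cannot close the gap, and a Duhamel lower bound is not the mechanism either. The missing idea is the paper's choice $\varphi=D^{\alpha}_{t|T}\bigl(w_1(t)\varphi_R(\eta)\bigr)$ with $w_1$ as in \eqref{w1}: by \eqref{I3} the right-sided derivative exactly undoes $I^{\alpha}_{0|t}$, so the nonlinearity is recovered with the full weight $w_1\varphi_R$, while by \eqref{I4} and \eqref{P3} the parasitic time term becomes $u\,D^{1+\alpha}_{t|T}w_1\sim T^{-(1+\alpha)}$, gaining an extra factor $T^{-\alpha}$ over your version ($(1+\alpha)p'$ versus $(1+\alpha)p'-\alpha$ in the decisive exponent) and producing precisely $p_\gamma=1+2(2-\gamma)/(Q-2+2\gamma)$. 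The critical case $p=p_\gamma$ is then handled not by a logarithmic cut-off but by replacing Young with H\"older on the $\Delta_{\mathbb{H}}\varphi_R$ term and taking $R=T/K$ with $K$ large, using the a priori bound $u\in L^p((0,\infty)\times\mathbb{H}^n)$ obtained at the threshold.

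Two further points. First, the other half of the threshold, $p\gamma\le 1$ (which is the whole story when $\gamma=0$), is not reachable by compactly supported spatial cut-offs: your ``Volterra-type inequality for $t\mapsto\int u\psi$'' requires $\int u\,\Delta_{\mathbb{H}}\psi\ge -C\int u\,\psi$, which fails pointwise where $\psi$ vanishes. The paper instead tests against the globally positive exponential weight $\Theta$ of Lemma \ref{lemma1}, for which $\Delta_{\mathbb{H}}\Theta\ge-\Theta$, obtaining $f'+f\ge\int_0^t(t-s)^{-\gamma}f^p\,ds$ by Jensen, and then invokes the nontrivial ODE lemma of Cazenave--Dickstein--Weissler (Lemma \ref{lemma2}(ii)) asserting that this Volterra inequality has no global positive solution when $p\gamma\le1$; that lemma is an essential imported ingredient, not a routine Gronwall step. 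Second, a small but real hypothesis mismatch in part (i): the conclusion $\int_{\mathbb{H}^n}u_0\,\varphi(0,\cdot)\,d\eta\ge c_0>0$ for large $R$ requires $u_0\ge0$, $u_0\not\equiv0$ together with the sign information $u\ge0$ from Theorem \ref{Local}; in the paper's scheme one in fact only needs to drop the (nonnegative) initial-data term, since the contradiction comes from forcing $\int\int u^p\widetilde\varphi\to0$, not from a lower bound on the data.
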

\begin{remark} ${}$
\begin{itemize}
\item [$(\mathrm{a})$] Based on \eqref{esti1} and \eqref{esti2}, $p_c$ is the critical exponent of problem \eqref{1} when $0<\gamma<1$. In addition, as $$p_\gamma>p_{\mathrm{sc}}:=1+2(2-\gamma)/Q,$$ it follows that $p_c>p_{\mathrm{sc}}$, i.e. the critical exponent $p_c$ is not the one predicted by scaling. Furthermore, we note that $p>p_c\Rightarrow
q_{\mathrm{sc}}>1$ since $p_c>1+2(2-\gamma)/Q$.
\item[$(\mathrm{b})$] It is worth noting that in Theorem \ref{Blowup-global} one can actually replace $|u|^{p-1}u$ with a more general function $f(u)$ satisfying $$c|u|^{p}\leq f(u)\leq C|u|^{p},\qquad 0<c<C.$$ For example $f(u)=\left(1+u^{-2}\right)|u|^p$.
\item [$(\mathrm{c})$]  Observe that one may assume the decay condition $|u_0(\eta)|\leq C
(1+|\eta|_{_\mathbb{H}})^{-\kappa}$ with $\kappa>2(2-\gamma)/(p-1)$ instead of the integrability condition $u_0\in L^{q_{\mathrm{sc}}}(\mathbb{H}^n)$. This example highlights the relationship between the decay rate of $u_0$ and that described in Theorem \ref{Lifespan-2} below.
\end{itemize}
\end{remark} 
\begin{remark} From \eqref{int3A}, and Lemma \ref{lemma2} $\mathrm{(i)}$ below, it follows that for any $p >1$, there exists a constant $B> 0$ such that
	if $u_0\geq0$ satisfies
	$$\int_{\mathbb{H}^n}u_0(\eta)e^{-\varepsilon\sqrt{1+(|x|^2+|y|^2)^2+\tau^2}}\,d\eta\geq B,$$
	with  $\varepsilon=1/(2Q+4)$, then the mild solution of \eqref{1} blows up in finite time.
\end{remark} 
Next, we investigate estimates for the lifespan of local-in-time weak or mild solutions to equation \eqref{1}. Specifically, we aim to understand how the maximal existence time depends on the size of the initial data. To capture this dependency, we consider scaled initial data of the form \( u_0^\varepsilon = \varepsilon u_0 \), where \( \varepsilon > 0 \) is a small parameter and $u_0\in C_0(\mathbb{H})\cap L^1(\mathbb{H})$.

We define the lifespan \( T_\varepsilon \) as the supremum of all \( T > 0 \) such that the corresponding solution \( u^\varepsilon(t, \eta) \) exists and remains in the appropriate function space on the time interval \( [0, T) \). Our goal is to derive asymptotic estimates on \( T_\varepsilon \) as \( \varepsilon \to 0^+ \), thereby revealing how the smallness of the initial data governs the time interval of existence.

\begin{theorem}[Lifespan estimates]\label{Lifespan}${}$\\
Let $0\leq\gamma<1$, $p>1$, and $u_0\in C_0(\mathbb{H}^n)\cap L^1(\mathbb{H})$ such that $\displaystyle \int_{\mathbb{H}^n} u_0(\eta)\,d\eta>0$. If $$p< p_{\mathrm{sc}}=1+\frac{2(2-\gamma)}{Q},$$ then there exists $\varepsilon_0>0$ such that for any $\varepsilon\in(0,\varepsilon_0]$ it holds
$$T_\varepsilon\leq C \varepsilon^{-\left(\frac{2-\gamma}{p-1}-\frac{Q}{2}\right)^{-1}},$$
where $C>0$ is a positive constant independent of $\varepsilon$.
\end{theorem}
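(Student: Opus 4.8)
The plan is to prove this upper bound on the lifespan by the \emph{test function} (nonlinear capacity) method, exploiting parabolic scaling adapted to the Heisenberg dilations. First I would pass from the mild formulation \eqref{IE} to a weak formulation: for every nonnegative $\varphi\in C^\infty_c([0,T)\times\mathbb{H}^n)$ one has
\begin{equation*}
\varepsilon\int_{\mathbb{H}^n}u_0\,\varphi(0,\cdot)\,d\eta+\int_0^T\!\!\int_{\mathbb{H}^n}|u|^{p-1}u\,(\mathcal{J}_T\varphi)\,d\eta\,dt=\int_0^T\!\!\int_{\mathbb{H}^n}u\,\bigl(-\partial_t\varphi-\Delta_{\mathbb{H}}\varphi\bigr)\,d\eta\,dt,
\end{equation*}
where $(\mathcal{J}_T\varphi)(s,\eta):=\int_s^T(t-s)^{-\gamma}\varphi(t,\eta)\,dt$ is the backward Riemann--Liouville integral produced by Fubini on the memory term, and where I use that $\Delta_{\mathbb{H}}$ is formally self-adjoint with vanishing integral. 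Since we are after blow-up I work with the nonnegative solution furnished by Theorem \ref{Local}, so that $|u|^{p-1}u=u^p\ge 0$ and the weight $\mathcal{J}_T\varphi$ is nonnegative.

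Next I would apply H\"older's inequality with the weight $\mathcal{J}_T\varphi$ on the right-hand side and absorb the resulting factor of $\int_0^T\!\!\int_{\mathbb{H}^n}u^p(\mathcal{J}_T\varphi)$ by Young's inequality. With $p'=p/(p-1)$ this yields the capacity estimate
\begin{equation*}
\varepsilon\int_{\mathbb{H}^n}u_0\,\varphi(0,\cdot)\,d\eta\le \frac{1}{p'}\int_0^T\!\!\int_{\mathbb{H}^n}\frac{\bigl|\partial_t\varphi+\Delta_{\mathbb{H}}\varphi\bigr|^{p'}}{(\mathcal{J}_T\varphi)^{p'-1}}\,d\eta\,dt=:\frac{1}{p'}\,\mathcal{I}(\varphi),
\end{equation*}
which no longer contains $u$ on the right. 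I would then insert the rescaled test function $\varphi(t,\eta)=\Phi(t/T)^{\ell}\,\Psi\bigl(\delta_{T^{-1/2}}\eta\bigr)^{\ell}$, where $\Phi,\Psi$ are fixed smooth cutoffs equal to $1$ near the origin, $\ell$ is a large integer, and $\delta_\lambda$ is the Heisenberg dilation. Using that $\Delta_{\mathbb{H}}$ is homogeneous of degree $2$ and $d\eta$ of degree $Q$ under $\delta_\lambda$, that $\partial_t$ scales as $T^{-1}$, and that $\mathcal{J}_T$ scales as $T^{1-\gamma}$ (its kernel $(t-s)^{-\gamma}$ integrates to order $1-\gamma$), a direct change of variables gives $\mathcal{I}(\varphi)\le C\,T^{E}$ with
\begin{equation*}
E=\frac{Q}{2}-\frac{2-\gamma}{p-1}.
\end{equation*}

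On the other hand, since $u_0\in L^1$ and $\int u_0>0$, dominated convergence gives $\int u_0\,\varphi(0,\cdot)\,d\eta\to\int u_0\,d\eta>0$, so this quantity is bounded below by a positive constant for all $T\ge T_*$; this is the relevant range precisely when $\varepsilon\le\varepsilon_0$ for a suitable $\varepsilon_0>0$. Combining the two estimates gives $c\,\varepsilon\le C\,T^{E}$. The hypothesis $p<p_{\mathrm{sc}}$ is \emph{exactly} the condition $E<0$ (equivalently $\delta:=2-\gamma-\tfrac{Q(p-1)}{2}>0$), so rearranging yields $T\le C'\varepsilon^{-(p-1)/\delta}$. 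As this holds for every $T<T_\varepsilon$, letting $T\uparrow T_\varepsilon$ and noting that $(p-1)/\delta=\bigl(\tfrac{2-\gamma}{p-1}-\tfrac{Q}{2}\bigr)^{-1}$ gives the claimed bound.

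The main obstacle is the finiteness of the rescaled constant hidden in $\mathcal{I}(\varphi)\le C\,T^{E}$: near the temporal endpoint $t=T$ and near the boundary of the spatial support both $\mathcal{J}_T\varphi$ and $\partial_t\varphi+\Delta_{\mathbb{H}}\varphi$ degenerate, and one must check that the ratio $|\partial_t\varphi+\Delta_{\mathbb{H}}\varphi|^{p'}(\mathcal{J}_T\varphi)^{-(p'-1)}$ stays integrable. This is what forces $\ell$ to be taken large, and it is genuinely more delicate than in the local heat case, because the singular kernel $(t-s)^{-\gamma}$ couples the temporal decay of $\varphi$ to the weight in the denominator; producing a bound on the rescaled integral that is independent of $T$ (and clean in $\ell$) is the crux of the argument. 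A secondary technical point is the rigorous justification of the mild-to-weak passage, in particular the Fubini interchange on the memory term and the reduction to the nonnegative solution so that the Young absorption closes.
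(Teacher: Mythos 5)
Your argument is correct and lands on the same exponent as the paper, but it reaches the capacity estimate by a different mechanism. The paper does not re-derive anything for this theorem: its proof is a two-line corollary of the inequality \eqref{weak2A}--\eqref{weak3A} already established in the blow-up section, obtained by simply retaining the initial-data term $C\,T^{-\alpha}\int_{\mathbb{H}^n}u_0\varphi_R\,d\eta$ (instead of discarding it) and setting $R=T$. That inequality is itself produced with the test function $\varphi=D^{\alpha}_{t|T}\bigl(w_1(t)\varphi_R(\eta)\bigr)$, $w_1(t)=(1-t/T)^{\sigma}$, using fractional integration by parts \eqref{IP}, the identity $D^{\alpha}_{0|t}I^{\alpha}_{0|t}=\mathrm{id}$ \eqref{I3}, and the explicit formulas \eqref{P1}--\eqref{P3}. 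Your route instead pushes the memory kernel onto the test function by Fubini, producing the weight $\mathcal{J}_T\varphi=\Gamma(\alpha)I^{\alpha}_{t|T}\varphi$, and then runs weighted H\"older/Young against $\mathcal{J}_T\varphi$. The two are essentially dual: choosing $\varphi=D^{\alpha}_{t|T}\widetilde{\varphi}$ is precisely what makes $\mathcal{J}_T\varphi$ collapse back to $\widetilde{\varphi}$. What the paper's choice buys is exactly the resolution of the difficulty you flag as the crux: with the temporal profile taken to be the \emph{polynomial} $(1-t/T)^{\sigma}$, $\sigma\gg1$, both $D^{\alpha}_{t|T}w_1$ and $\mathcal{J}_Tw_1$ are explicit Beta-function expressions of order $T^{-\alpha}(1-t/T)^{\sigma-\alpha}$, respectively $T^{1-\gamma}(1-t/T)^{\sigma+1-\gamma}$, so the ratio $|\partial_t\varphi+\Delta_{\mathbb{H}}\varphi|^{p'}(\mathcal{J}_T\varphi)^{-(p'-1)}$ is manifestly integrable near $t=T$ with the clean scaling $T^{Q/2-(2-\gamma)/(p-1)}$; a generic smooth bump in $t$ makes the lower bound on $\mathcal{J}_T\varphi$ near the endpoint genuinely awkward. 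Your spatial cutoff and scaling computation match \eqref{varphi_R} and Lemma \ref{lem:testfunction}, and your endgame (dominated convergence for $\int u_0\varphi(0,\cdot)$, the sign of the exponent being equivalent to $p<p_{\mathrm{sc}}$, and the trivial treatment of $T\le T_*$) is identical to the paper's.

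One caveat you should make explicit: you invoke the nonnegative solution from Theorem \ref{Local} to write $|u|^{p-1}u=u^p$ and close the Young absorption, but that reduction requires $u_0\ge0$, whereas the theorem only assumes $\int_{\mathbb{H}^n}u_0\,d\eta>0$. The paper's own derivation of \eqref{weak2A} has the same implicit restriction, so this is a shared gap rather than a defect of your approach, but as written your absorption step $\varepsilon\int\!\!\int|u|^p\mathcal{J}_T\varphi-\int\!\!\int|u|^{p-1}u\,\mathcal{J}_T\varphi\le0$ genuinely fails on the set where $u<0$.
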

The result on the lifespan can be relaxed in the following manner.
\begin{theorem}[Lifespan estimates]\label{Lifespan-2}${}$\\
Let $0\leq\gamma<1$, $p>1$, and $u_0\in C_0(\mathbb{H}^n)\cap L^1_{\mathrm{loc}}(\mathbb{H})$ such that 
\begin{equation}\label{corA}
u_0(\eta)\geq (1+|\eta|)^{-\kappa},\qquad\hbox{for all}\,\,\eta\in\mathbb{H}^n.
\end{equation}
 If $0<\kappa<2(2-\gamma)/(p-1)$,  there exists a constant $\varepsilon_0>0$ such that the lifespan $T_\varepsilon$ verifies
$$
T_\varepsilon \leq\,C \, \varepsilon^{-\left(\frac{2-\gamma}{p-1}-\frac{\kappa}{2}\right)^{-1}}, \qquad \hbox{for any}\,\, \varepsilon \in ( 0,\varepsilon_0].
$$
\end{theorem}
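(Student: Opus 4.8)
The plan is to derive the lifespan bound by the rescaled test function (nonlinear capacity) method, reusing the framework already employed for Theorem \ref{Lifespan} and Theorem \ref{Blowup-global}(i), and changing only the lower bound on the initial term so as to exploit the polynomial profile \eqref{corA}. Write $u^\varepsilon$ for the solution of \eqref{1} with datum $\varepsilon u_0$, and suppose it exists on $[0,T)$. Testing \eqref{1} against a nonnegative $\psi(t,\eta)$ with $\psi(T,\cdot)=0$, integrating by parts (using self-adjointness of $\Delta_{\mathbb{H}}$) and applying Fubini to the time-nonlocal term, I would obtain the identity
$$\varepsilon\int_{\mathbb{H}^n} u_0\,\psi(0,\cdot)\,d\eta+\int_0^T\!\!\int_{\mathbb{H}^n}|u^\varepsilon|^{p}\,\big(J^{1-\gamma}_{T}\psi\big)\,d\eta\,dt=-\int_0^T\!\!\int_{\mathbb{H}^n}u^\varepsilon\big(\psi_t+\Delta_{\mathbb{H}}\psi\big)\,d\eta\,dt,$$
where $J^{1-\gamma}_T\psi(s,\eta)=\int_s^T(t-s)^{-\gamma}\psi(t,\eta)\,dt$ is the backward Riemann--Liouville integral, the natural dual of $I^\alpha_{0|s}$. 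Positivity of $u^\varepsilon$ (Theorem \ref{Local}) is what lets me write $|u^\varepsilon|^{p-1}u^\varepsilon=|u^\varepsilon|^p$ and bound the right-hand side by $\int_0^T\!\!\int u^\varepsilon(|\psi_t|+|\Delta_{\mathbb{H}}\psi|)$.

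Next I would take the separated, rescaled test function $\psi(t,\eta)=\theta(t/T)^\ell\,\Phi(\delta_{R^{-1}}\eta)^\ell$, with $\theta,\Phi$ smooth cutoffs equal to $1$ near the origin and supported in the unit ball, $\ell\ge 2p'$ (where $p'=p/(p-1)$) so that all test-function integrals below converge, and the parabolic calibration $R=T^{1/2}$ forced by the degree-$2$ homogeneity of $\Delta_{\mathbb{H}}$. Splitting the two error terms, applying Hölder with exponents $(p,p')$ against the weight $J^{1-\gamma}_T\psi$ and then Young's inequality, I absorb $\tfrac12\int_0^T\!\!\int|u^\varepsilon|^pJ^{1-\gamma}_T\psi$ into the left-hand side and reach
$$\varepsilon\int_{\mathbb{H}^n} u_0\,\psi(0,\cdot)\,d\eta\le C\big(\mathcal A+\mathcal B\big),\qquad \mathcal A=\int_0^T\!\!\int\frac{|\psi_t|^{p'}}{(J^{1-\gamma}_T\psi)^{p'/p}},\quad \mathcal B=\int_0^T\!\!\int\frac{|\Delta_{\mathbb{H}}\psi|^{p'}}{(J^{1-\gamma}_T\psi)^{p'/p}}.$$

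The decisive step is the scaling of $\mathcal A,\mathcal B$. The substitutions $t=T\tau$, $\eta=\delta_R\xi$ give $J^{1-\gamma}_T\psi\sim T^{1-\gamma}$ on the relevant support, $\psi_t\sim T^{-1}$, $\Delta_{\mathbb{H}}\psi\sim R^{-2}$, the $\Phi$-powers stay integrable thanks to $\ell\ge 2p'$, and $d\eta\,dt\sim R^{Q}\,T$; using $1-p'-(1-\gamma)p'/p=-\tfrac{2-\gamma}{p-1}$ and $R=T^{1/2}$ this yields
$$\mathcal A+\mathcal B\le C\,R^{Q}\,T^{\,1-p'-(1-\gamma)p'/p}+C\,R^{Q-2p'}\,T^{\,1-(1-\gamma)p'/p}\le C\,T^{\,\frac{Q}{2}-\frac{2-\gamma}{p-1}}.$$
For the left-hand side I invoke \eqref{corA}: since $\psi(0,\cdot)\ge\mathbbm{1}_{B_R}$ and the Heisenberg ball of radius $R$ has Haar measure $\sim R^{Q}$,
$$\int_{\mathbb{H}^n} u_0\,\psi(0,\cdot)\,d\eta\ge\int_{B_R}(1+|\eta|_{\mathbb{H}})^{-\kappa}\,d\eta\ge c\,R^{\,Q-\kappa}=c\,T^{\frac{Q-\kappa}{2}},$$
which holds whenever $\kappa<Q$. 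Combining the two estimates gives $\varepsilon\,T^{(Q-\kappa)/2}\le C\,T^{Q/2-(2-\gamma)/(p-1)}$, i.e. $\varepsilon\le C\,T^{-(\frac{2-\gamma}{p-1}-\frac{\kappa}{2})}$; since $\tfrac{2-\gamma}{p-1}-\tfrac{\kappa}{2}>0$ by hypothesis, solving for $T$ shows that every $T$ below the lifespan obeys $T\le C\,\varepsilon^{-(\frac{2-\gamma}{p-1}-\frac{\kappa}{2})^{-1}}$, whence the same bound holds for $T_\varepsilon$.

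The main obstacle I anticipate is the rigorous handling of the time-nonlocal term: one must justify the weak formulation and the use of Fubini for merely mild solutions, and, more delicately, check that $J^{1-\gamma}_T\psi$ is comparable to $T^{1-\gamma}\psi$ on the supports of $\psi_t$ and of $\Delta_{\mathbb{H}}\psi$, so that the quotients in $\mathcal A,\mathcal B$ are integrable and carry the sharp power $T^{-(2-\gamma)/(p-1)}$; this dictates the precise profile of $\theta$ and the threshold $\ell\ge 2p'$. By contrast, the only genuinely new ingredient relative to Theorem \ref{Lifespan} is the spatial lower bound $\int_{B_R}(1+|\eta|_{\mathbb{H}})^{-\kappa}\sim R^{Q-\kappa}$, which replaces the constant $\int u_0$ available for $L^1$ data and accounts for the substitution of $Q/2$ by $\kappa/2$ (formally recovering Theorem \ref{Lifespan} as $\kappa\uparrow Q$). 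The complementary range $\kappa\ge Q$ can occur only when $p<p_{\mathrm{sc}}$; there $\int_{B_R}(1+|\eta|_{\mathbb{H}})^{-\kappa}$ stays bounded below by a positive constant, so the argument closes a fortiori and in fact delivers the stronger $Q$-exponent bound, which implies the stated one.
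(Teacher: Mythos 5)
Your proposal is correct and follows essentially the same route as the paper: the rescaled test-function (nonlinear capacity) bound $\varepsilon\int_{\mathbb{H}^n}u_0\varphi_R\,d\eta\lesssim T^{\frac{Q}{2}-\frac{2-\gamma}{p-1}}$ at the parabolic calibration, combined with the single new ingredient $\int u_0\varphi_R\,d\eta\gtrsim (\text{radius})^{Q-\kappa}$ coming from \eqref{corA}, exactly as in \eqref{corB}--\eqref{corC}. The only difference is cosmetic: you dualize the memory term via the backward integral $J^{1-\gamma}_T\psi$ with a generic temporal cutoff $\theta$, whereas the paper reuses \eqref{corC}, derived with the test function $D^\alpha_{t|T}\left(w_1(t)\varphi_R(\eta)\right)$, $w_1(t)=(1-t/T)^\sigma$, whose explicit fractional-derivative formulas (Lemma \ref{L1}) settle the integrability near $t=T$ that you correctly flag as the one delicate point.
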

\begin{remark} 
Unfortunately, Theorem \ref{Lifespan} does not provide a lifespan estimate for all cases of blow-up solutions. Since $p_{sc}<p_c$, we do not know the lifespan estimate for the case $p_{sc}\leq p\leq p_c.$ 
\end{remark}

The proofs of local and global existence of solutions rely on $L^p-L^q$ estimates and the application of a fixed point theorem. In contrast, the analysis of blow-up and lifespan of solutions is carried out using nonlinear capacity estimates, also known as the rescaled test function method. The nonlinear capacity method was introduced to prove the non-existence of global solutions by  Baras and Pierre \cite{Baras-Pierre}, then used by Baras and Kersner in \cite{Baras}; later on, it was developed by Zhang in \cite{Zhang} and Mitidieri and Pohozaev in \cite{19}.  It was also used by Kirane et al.  in \cite{Kirane-Guedda,KLT},  and Fino et al in \cite{Fino1,Fino3,Fino4,Fino5}. \\

The paper is organized as follows. Section \ref{sec2} introduces key definitions, terminology, and preliminary results related to the Heisenberg group, the sub-Laplacian operator, and its heat kernel. In Section \ref{sec3}, we derive the local existence of the mild solution.
Sections \ref{sec4}, \ref{sec5} and \ref{sec6} are dedicated to proving the main results of the paper.

\medskip
\noindent\textbf{Notation:}  We shall employ the following notational conventions in what follows. For $\Omega \subset \mathbb{R}$, the characteristic function on $\Omega$ is denoted by $\mathbbm{1}_\Omega$; for any nonnegative functions $g$ and $h$, we write $f\lesssim g$ whenever there exists a positive constant $C>0$ such that $f\leqslant Cg$. We use $C$ as a generic constant, which may change value from line to line. Norms in Lebesgue spaces $L^p$, for $p \in [1, \infty]$, are written as $\| \cdot \|_p$.


\section{Preliminaries}\label{sec2}
As a preliminary step, we recall the definition and fundamental properties of the Heisenberg group (cf. (see, e.g., \cite{Folland,Follandstein}).

\subsection{Heisenberg group}
The Heisenberg group $\mathbb{H}^n$  is the space $\mathbb{R}^{2n+1}=\mathbb{R}^n\times \mathbb{R}^n\times \mathbb{R}$ equipped with the group operation
$$\eta\circ\eta^\prime=(x+x^\prime,y+y^\prime,\tau+\tau^\prime+2(x\cdotp y^\prime-x^\prime\cdotp y)),$$
where $\eta=(x,y,\tau)$, $\eta^\prime=(x^\prime,y^\prime,\tau^\prime)$, and $\cdotp$ is the standard scalar product in $\mathbb{R}^n$. Let us denote the parabolic dilation in $\mathbb{R}^{2n+1}$ by $\delta_\lambda$, namely, $\delta_\lambda(\eta)=(\lambda x, \lambda y, \lambda^2 \tau)$ for any $\lambda>0$, $\eta=(x,y,\tau)\in\mathbb{H}^n$. The Jacobian determinant of $\delta_\lambda$ is $\lambda^Q$, where $Q=2n+2$ is the homogeneous dimension of $\mathbb{H}^n$. A direct calculation shows that $\delta_\lambda$ is an automorphism of $\mathbb{H}^n$ for every $\lambda>0$, and therefore  $\mathbb{H}^n=(\mathbb{R}^{2n+1},\circ,\delta_\lambda)$ is a homogeneous Lie group on $\mathbb{R}^{2n+1}$.

The homogeneous Heisenberg norm (also called Kor\'anyi norm) is derived from an anisotropic dilation on the Heisenberg group and is defined by
$$|\eta|_{_{\mathbb{H}}}=\left(\left(\sum_{i=1}^n (x_i^2+y_i^2)\right)^2+\tau^2\right)^{\frac{1}{4}}=\left((|x|^2+|y|^2)^2+\tau^2\right)^{\frac{1}{4}},$$
where $|\cdotp|$ is the Euclidean norm associated with $\mathbb{R}^n$. The associated Kor\'anyi distance between two points $\eta$ and $\xi$ of $\mathbb{H}$ is defined by
$$d_{_{\mathbb{H}}}(\eta,\xi)=|\xi^{-1}\circ\eta|_{_{\mathbb{H}}},\quad \eta,\xi\in\mathbb{H},$$
where $\xi^{-1}$ denotes the inverse of $\xi$ with respect to the group action, i.e. $\xi^{-1}=-\xi$. This metric induces a topology on $\mathbb{H}^n$. Thus, we can define the Heisenberg ball of $\mathbb{H}^n$, centered
at $\eta$ and with radius $r>0$, as $$B_{\mathbb{H}}(\eta,r)=\{\xi\in\mathbb{H}^n:\,d_{_{\mathbb{H}}}(\eta,\xi)<r\}.$$

The Heisenberg convolution between any two regular functions $f$ and $g$ is defined by
\begin{align*}
(f\ast_{_{\mathbb{H}}}g)(\eta)&=\int_{\mathbb{H}^n}f(\eta\circ\xi^{-1})g(\xi)\,d\xi\\&=\int_{\mathbb{H}^n}f(\xi)g(\xi^{-1}\circ\eta)\,d\xi.\end{align*}

The left-invariant vector fields that span the Lie algebra \(\mathfrak{h}_n\) of \(\mathbb{H}^n\) are given by
\[
X_i = \partial_{x_i}-2 y_i\partial_\tau, \quad
Y_i = \partial_{y_i}+2 x_i\partial_\tau, \quad
T = \partial_\tau, \quad i = 1, \ldots, n.
\]
The Heisenberg gradient is given by
\begin{equation}\label{48}
\nabla_{\mathbb{H}}=(X_1,\dots,X_n,Y_1,\dots,Y_n),
\end{equation}
and the sub-Laplacian (also referred to as Kohn Laplacian) is defined as
\begin{equation}\label{40}
\Delta_{\mathbb{H}}=\sum_{i=1}^n(X_i^2+Y_i^2)=\Delta_x+\Delta_y+4(|x|^2+|y|^2)\partial_\tau^2+4\sum_{i=1}^{n}\left(x_i\partial_{y_i\tau}^2-y_i\partial_{x_i\tau}^2\right),
\end{equation}
where $\Delta_x=\nabla_x\cdotp\nabla_x$ and $\Delta_y=\nabla_y\cdotp\nabla_y$ stand for the Laplace operators on $\mathbb{R}^n$.

\subsection{Heat kernel}
We recall the definition and some properties related to the heat kernel associated with $-\Delta_{\mathbb{H}}$ on the Heisenberg group.
\begin{proposition}\label{Heat}\cite[Theorem~1.8]{BahouriGallagher}${}$\\
There exists a function $h\in\mathcal{S}(\mathbb{H}^n)$ such that if u denotes the
solution of the free heat equation on the Heisenberg group
\begin{equation}\label{}
\begin{array}{ll}
\partial_t u-\Delta_{\mathbb{H}}u=0,&\qquad {\eta\in \mathbb{H}^n,\,\,\,t>0,}
 \\{}\\ u(\eta,0)=u_{0}(\eta), &\qquad \eta\in \mathbb{H}^n,
 \end{array}
\end{equation}
then we have
$$u(\cdotp,t)=h_t\ast_{_{\mathbb{H}}} u_0,$$
where $\ast_{_{\mathbb{H}}}$ denotes the convolution on the Heisenberg group defined above, while the heat kernel $h_t$ associated with $-\Delta_{\mathbb{H}}$ is defined by
$$h_t(\eta)=\frac{1}{t^{n+1}}h\left(\frac{x}{\sqrt{t}},\frac{y}{\sqrt{t}},\frac{s}{t} \right),\qquad\hbox{for all}\,\,\,\eta=(x,y,s)\in\mathbb{H}^n, \,\,t>0.$$
\end{proposition}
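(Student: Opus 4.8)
The plan is to construct $h_t$ via the group Fourier transform on $\mathbb{H}^n$, which turns the PDE into a family of ODEs solvable in closed form, and then to read off the scaling law and the Schwartz regularity from the homogeneity and hypoellipticity of $\Delta_{\mathbb{H}}$. First I would exploit left-invariance: since $\Delta_{\mathbb{H}}$ is built from the left-invariant fields $X_i,Y_i$, it commutes with the left translations $L_{\eta_0}f(\eta)=f(\eta_0^{-1}\circ\eta)$, hence so does the heat semigroup $S_{\mathbb{H}}(t)=e^{t\Delta_{\mathbb{H}}}$. A left-translation-invariant bounded operator is a right convolution, so $S_{\mathbb{H}}(t)u_0=h_t\ast_{\mathbb{H}}u_0$ for a (distribution) kernel $h_t$; the whole problem reduces to identifying $h_t$ and proving $h_t\in\mathcal{S}(\mathbb{H}^n)$.

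The computational heart is the second step. The irreducible unitary (Schr\"odinger) representations $\pi_\lambda$, $\lambda\in\mathbb{R}\setminus\{0\}$, act on $L^2(\mathbb{R}^n)$, and under them the sub-Laplacian becomes the rescaled harmonic oscillator $\pi_\lambda(-\Delta_{\mathbb{H}})=-\Delta_\xi+\lambda^2|\xi|^2$ on $L^2(\mathbb{R}^n_\xi)$, up to normalizing constants. Applying the group Fourier transform to the heat equation yields the operator-valued ODE $\partial_t\widehat{u}(t,\lambda)=-\pi_\lambda(-\Delta_{\mathbb{H}})\widehat{u}(t,\lambda)$, whose solution is the oscillator semigroup $e^{-tH_\lambda}$ with $H_\lambda=-\Delta_\xi+\lambda^2|\xi|^2$. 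I would then invoke Mehler's formula to write down the Schwartz kernel of $e^{-tH_\lambda}$ explicitly in terms of $\cosh$, $\sinh$, and a Gaussian in $\xi$, and invert the Fourier transform (integrating against $\pi_\lambda$ with the Plancherel measure, of size $|\lambda|^n\,d\lambda$) to obtain $h_t$ as an explicit oscillatory-Gaussian integral in $\lambda$.

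Next I would extract the scaling law from homogeneity. Since $X_i,Y_i$ are homogeneous of degree $1$ for $\delta_\mu$, the operator satisfies $\Delta_{\mathbb{H}}(f\circ\delta_\mu)=\mu^2(\Delta_{\mathbb{H}}f)\circ\delta_\mu$, so $u_\mu(\eta,t):=u(\delta_\mu\eta,\mu^2t)$ again solves the free heat equation, with initial datum $u_0\circ\delta_\mu$. Combining this with the Jacobian $\det D\delta_\mu=\mu^Q$ and the uniqueness of the convolution kernel forces $h_t(\eta)=t^{-Q/2}\,h_1(\delta_{t^{-1/2}}\eta)$; setting $h:=h_1$ and using $Q/2=n+1$ gives exactly the stated formula $h_t(\eta)=t^{-(n+1)}h(x/\sqrt{t},y/\sqrt{t},s/t)$. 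Finally, the regularity: the bracket relation $[X_i,Y_i]=4T$ shows that $X_1,\dots,X_n,Y_1,\dots,Y_n$ satisfy H\"ormander's condition, so $\partial_t-\Delta_{\mathbb{H}}$ is hypoelliptic and $h_t$ is $C^\infty$; the explicit Mehler representation shows moreover that $h_t$ and all its $\nabla_{\mathbb{H}}$-derivatives decay rapidly, yielding $h\in\mathcal{S}(\mathbb{H}^n)$.

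I expect the genuine obstacle to be the inversion in the second step: carrying out the $\lambda$-integral of the Mehler kernel against $\pi_\lambda$ and controlling its convergence, smoothness, and rapid decay requires careful Gaussian and stationary-phase bookkeeping (this is where the Schwartz class membership is really earned). By contrast, the left-invariance, the homogeneity-driven scaling, and the H\"ormander/hypoellipticity argument are structural and comparatively routine once the explicit kernel is in hand.
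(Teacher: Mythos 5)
The paper offers no proof of this proposition: it is imported verbatim as a citation of \cite[Theorem~1.8]{BahouriGallagher}, so there is nothing internal to compare your argument against. That said, your outline is essentially the standard proof of the cited theorem: left-invariance reduces the semigroup to a right convolution, the group Fourier transform conjugates $-\Delta_{\mathbb{H}}$ into the family of rescaled harmonic oscillators, Mehler's formula gives the oscillator heat kernel, and the inverse Fourier integral in $\lambda$ produces exactly the explicit representation that the paper records immediately afterwards (the Randall formula involving $(\lambda/\sinh(t\lambda))^n\exp(-|z|^2\lambda/(4\tanh(t\lambda)))e^{i\lambda\tau}$); the scaling law $h_t(\eta)=t^{-(n+1)}h(\delta_{t^{-1/2}}\eta)$ then follows from the degree-$2$ homogeneity of $\Delta_{\mathbb{H}}$ together with $Q/2=n+1$, as you say. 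Your self-diagnosis of the genuine difficulty is also accurate: the a priori convolution kernel is only a distribution, and upgrading it to a Schwartz function requires either the careful seminorm estimates on the $\lambda$-integral that you flag or a Hulanicki-type functional-calculus argument, so the sketch is correct in structure but that step is where the real work of the reference lies.
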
 
The following proposition follows directly from \cite[Theorem~3.1]{Folland}, \cite[Corollary~1.70]{Follandstein}, \cite[Proposition~1.68]{Follandstein}, and \cite[Corollary~2.3]{Pazy}.
\begin{proposition}\label{properties}${}$\\
There is a unique semigroup $(S_{\mathbb{H}}(t))_{t>0}$ generated by $\Delta_{\mathbb{H}}$ and satisfying the following properties:
\begin{itemize}
\item[$(i)$] $(S_{\mathbb{H}}(t))_{t>0}$ is a contraction semigroup on $L^p(\mathbb{H}^n)$, $1\leq p\leq \infty$, which is strongly continuous for $p<\infty$;
\item[$(ii)$] $(S_{\mathbb{H}}(t))_{t>0}$  is a strongly continuous semigroup on $C_0(\mathbb{H}^n)$;
\item[$(iii)$] For every $v\in X$, where $X$ is either $L^p(\mathbb{H}^n)$ for $1\leq p< \infty$ or $C_0(\mathbb{H}^n)$, the map $t\longmapsto S_{\mathbb{H}}(t)v$ is continuous from $[0,\infty)$ into $X$;
\item[$(iv)$] $S_{\mathbb{H}}(t)f=h_t\ast_{_{\mathbb{H}}} f$, for all $f\in L^p(\mathbb{H}^n)$, $1\leq p\leq \infty$, $t>0$;
\item[$(v)$] $\displaystyle\int_{\mathbb{H}^n}h_t(\eta)\,d\eta=1$, for all $t>0$;
\item[$(vi)$] $h_t(\eta)\geq0$, for all $\eta \in \mathbb{H}^n, t>0$; 
\item[$(vii)$] $h_{r^2t}(rx,ry,r^2s)=r^{-Q}h_{t}(x,y,s)$, for all $r,t>0$, $(x, y,s) \in \mathbb{H}^n$; 
\item[$(viii)$] $h_t(\eta)=h_t(\eta^{-1})$, for all $\eta \in \mathbb{H}^n$, $t>0$;
\item[$(ix)$] $h_t\ast_{_{\mathbb{H}}} h_s=h_{t+s}$, for all $s,t>0$. 
\end{itemize}
\end{proposition}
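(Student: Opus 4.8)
The plan is to assemble the nine assertions from the explicit heat kernel $h_t$ furnished by Proposition \ref{Heat} together with abstract semigroup theory, rather than to prove anything from first principles. I would start from Folland's construction of the fundamental solution of $-\Delta_{\mathbb{H}}$ on the stratified group $\mathbb{H}^n$, which produces the smooth, rapidly decaying kernel $h\in\mathcal{S}(\mathbb{H}^n)$ of Proposition \ref{Heat}. The scaling identity $(vii)$ is then immediate: substituting $(rx,ry,r^2s)$ and $r^2t$ into the representation $h_t(\eta)=t^{-(n+1)}h(x/\sqrt t,\,y/\sqrt t,\,s/t)$ and using $n+1=Q/2$ recovers $h_{r^2t}(rx,ry,r^2s)=r^{-Q}h_t(x,y,s)$, which merely expresses that $\Delta_{\mathbb{H}}$ is homogeneous of degree $2$ under the dilations $\delta_\lambda$.

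Next I would record the pointwise and integral properties of $h_t$. Positivity $(vi)$ follows from the maximum principle for the subelliptic heat operator $\partial_t-\Delta_{\mathbb{H}}$ (equivalently from the Folland--Stein estimates). The symmetry $(viii)$ comes from the self-adjointness of $\Delta_{\mathbb{H}}$ on $L^2(\mathbb{H}^n)$ with respect to the bi-invariant Haar measure, since the generators obey $X_i^{*}=-X_i$ and $Y_i^{*}=-Y_i$: self-adjointness forces the integral kernel $K_t(\eta,\xi)=h_t(\xi^{-1}\circ\eta)$ of $S_{\mathbb{H}}(t)$ to be symmetric in $(\eta,\xi)$, and unravelling this symmetry yields $h_t(\zeta)=h_t(\zeta^{-1})$. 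For the normalization $(v)$ the idea is that the constant function is a stationary solution of the free heat equation; conservativeness of the heat semigroup on the Carnot group $\mathbb{H}^n$ then gives $S_{\mathbb{H}}(t)1=1$, whence $\int_{\mathbb{H}^n}h_t\,d\eta=1$ for every $t>0$.

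With the kernel in hand I would set $S_{\mathbb{H}}(t)f:=h_t\ast_{\mathbb{H}}f$ for $t>0$ and $S_{\mathbb{H}}(0)=\mathrm{Id}$, which is exactly $(iv)$, and then verify the semigroup law $(ix)$. The cleanest route to $h_t\ast_{\mathbb{H}}h_s=h_{t+s}$ is uniqueness in Proposition \ref{Heat}: the function $t\mapsto h_t\ast_{\mathbb{H}}h_s$ solves the free heat equation with datum $h_s$, hence coincides with $h_{t+s}$, the left-invariance of $\Delta_{\mathbb{H}}$ being what makes the convolution operator commute with the flow. Combining $(v)$ and $(vi)$ with Young's convolution inequality on $\mathbb{H}^n$ gives $\|S_{\mathbb{H}}(t)f\|_{p}\leq\|h_t\|_{1}\|f\|_{p}=\|f\|_{p}$, which is the contraction property in $(i)$ for every $1\leq p\leq\infty$.

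The remaining content, the strong-continuity claims in $(i)$, $(ii)$, $(iii)$ and the uniqueness of the semigroup, I would obtain from the abstract framework of Pazy combined with the approximate-identity behaviour of $h_t$ as $t\to0^{+}$. Using the Schwartz decay of $h$ and the scaling $(vii)$, the family $\{h_t\}_{t>0}$ is an approximate identity, so $h_t\ast_{\mathbb{H}}f\to f$ in $L^p(\mathbb{H}^n)$ for $p<\infty$ and in $C_0(\mathbb{H}^n)$; together with the semigroup law this gives strong continuity on each admissible space $X$ and continuity of $t\mapsto S_{\mathbb{H}}(t)v$ on $[0,\infty)$, while the Hille--Yosida theory of Pazy identifies $\Delta_{\mathbb{H}}$ with its natural domain as the generator and so fixes the semigroup uniquely. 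I expect the only real obstacle to be technical: verifying the approximate-identity convergence uniformly enough to secure strong continuity at $t=0$ on the non-compact group, and the conservativeness underlying $(v)$, both of which rest on the rapid decay $h\in\mathcal{S}(\mathbb{H}^n)$ rather than on any genuinely new idea.
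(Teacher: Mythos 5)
Your outline is correct, and it matches the paper's treatment in the only sense available: the paper offers no proof of this proposition, delegating it entirely to the cited results of Folland, Folland--Stein, and Pazy, and your sketch is a faithful reconstruction of the standard arguments those references contain (the dilation identity read off from the explicit form of $h_t$ with $Q=2(n+1)$, positivity and symmetry from the maximum principle and self-adjointness, $\|h_t\|_1=1$ plus Young's inequality for the contraction property, the approximate-identity behaviour of $h_t$ as $t\to0^+$ for strong continuity, and uniqueness of the heat flow for the Chapman--Kolmogorov law $(ix)$). The one point worth tightening is $(v)$: rather than invoking conservativeness (which is essentially the statement being proved), note that the scaling gives $\int_{\mathbb{H}^n}h_t\,d\eta=\int_{\mathbb{H}^n}h\,d\eta$ independently of $t$, so the normalization follows directly from the initial condition $h_t\ast_{\mathbb{H}}u_0\to u_0$ as $t\to0^+$.
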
 
\begin{proposition}\cite[Theorem~1.3.2]{Randall}${}$\\
The function \( h_t \) is given by
\[
h_t(\eta) = \frac{1}{(2\pi)^{n+2} 2^n} \int_{\mathbb{R}} \left(\frac{\lambda}{\sinh(t\lambda)}\right)^n
\exp\left( -\frac{|z|^2 \lambda}{4 \tanh(t\lambda)} \right) 
e^{i\lambda \tau} \, d\lambda, \quad \hbox{for all}\,\,\eta=(z, \tau) \in \mathbb{R}^{2n} \times \mathbb{R}.
\]
\end{proposition}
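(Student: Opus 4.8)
The plan is to compute $h_t$ explicitly by reducing the $(2n+1)$-dimensional degenerate heat equation to a family of harmonic-oscillator problems via a partial Fourier transform in the central variable. By Proposition~\ref{Heat}, $h_t$ is the (Schwartz) convolution kernel of the free semigroup, i.e. the fundamental solution of $\partial_t h=\Delta_{\mathbb{H}}h$ with $h_0=\delta_e$, where $\delta_e$ is the Dirac mass at the group identity; writing $\eta=(z,\tau)$ with $z=(x,y)\in\mathbb{R}^{2n}$, we have $\delta_e=\delta_0(z)\otimes\delta_0(\tau)$. It therefore suffices to identify this fundamental solution with the stated integral.

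First I would apply the Fourier transform in $\tau$ alone, $\tilde h_t(z,\lambda)=\int_{\mathbb{R}}h_t(z,\tau)e^{-i\lambda\tau}\,d\tau$, under which $\partial_\tau\mapsto i\lambda$. From the explicit forms of $X_i,Y_i$ and of $\Delta_{\mathbb{H}}$ in \eqref{40}, the sub-Laplacian is converted into the twisted (Hermite-type) operator
\[
\mathcal{L}_\lambda=\Delta_z-4\lambda^2|z|^2+4i\lambda\,(x\cdot\nabla_y-y\cdot\nabla_x),
\]
acting on functions of $z$ with $\lambda$ a parameter. The transformed Cauchy problem becomes $\partial_t\tilde h_t=\mathcal{L}_\lambda\tilde h_t$ with initial datum $\tilde h_0(z,\lambda)=\delta_0(z)$ (the $\tau$-transform of $\delta_e$), and the Schwartz regularity furnished by Proposition~\ref{Heat} justifies differentiating under the integral sign and, later, the inversion.

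The crucial simplification is that the first-order rotation term $4i\lambda(x\cdot\nabla_y-y\cdot\nabla_x)$ annihilates radial functions of $z$, while both the datum $\delta_0(z)$ and the operator $\Delta_z-4\lambda^2|z|^2$ are rotation-invariant; hence $\tilde h_t(\cdot,\lambda)$ stays radial and in fact solves the $2n$-dimensional harmonic-oscillator heat equation $\partial_t\tilde h_t=(\Delta_z-4\lambda^2|z|^2)\tilde h_t$. I would then use the Gaussian ansatz $\tilde h_t(z,\lambda)=a(t,\lambda)\,e^{-b(t,\lambda)|z|^2}$, whose substitution decouples the problem into $a'/a=-4n\,b$ and the Riccati equation $b'=4\lambda^2-4b^2$. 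The branch singular at $t=0$, forced by the $\delta_0(z)$ initial trace, is $b(t,\lambda)=|\lambda|\coth(\,\cdot\,)$, and $a$ follows by quadrature with the prefactor fixed by the normalization $\int_{\mathbb{R}^{2n}}\tilde h_t(z,\lambda)\,dz\to1$ as $t\to0^+$. This is precisely Mehler's formula for the harmonic oscillator, producing the factor $(\lambda/\sinh(\,\cdot\,))^n$ together with a Gaussian of the form $\exp(-c\,|z|^2\lambda\coth(\,\cdot\,))$.

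Finally I would invert the $\tau$-transform, $h_t(z,\tau)=\frac{1}{2\pi}\int_{\mathbb{R}}\tilde h_t(z,\lambda)e^{i\lambda\tau}\,d\lambda$, to reach the asserted representation; the integral converges absolutely since $\lambda/\sinh(t\lambda)$ decays exponentially for fixed $t>0$ while the Gaussian factor stays bounded. The main obstacle is twofold. The genuinely substantial point is establishing Mehler's formula with the correct branch of the Riccati solution and the correct normalizing prefactor, equivalently diagonalizing $\mathcal{L}_\lambda$ in the special Hermite basis and summing the resulting series. The more mundane but delicate point is reconciling every multiplicative constant with the normalizations fixed here, namely the factor $2$ in the group law, the form \eqref{40} of $\Delta_{\mathbb{H}}$, and the chosen Fourier convention, since these are exactly what decide whether one obtains $\sinh(t\lambda)$ and $\tfrac{|z|^2\lambda}{4\tanh(t\lambda)}$ or the same expressions evaluated at a rescaled $\lambda$; indeed the present conventions yield the argument $4|\lambda|t$, and matching Randall's stated form amounts to the rescaling $\lambda\mapsto 4\lambda$ with the Jacobian absorbed into the constant $1/((2\pi)^{n+2}2^n)$. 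Tracking these factors, together with the elementary convergence estimate, completes the identification.
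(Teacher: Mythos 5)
The paper does not prove this proposition at all: it is quoted directly from Randall \cite{Randall}, so there is no internal argument to compare yours against. Your derivation is the classical Hulanicki--Gaveau route --- partial Fourier transform in the central variable, reduction of $\Delta_{\mathbb{H}}$ to the twisted operator $\mathcal{L}_\lambda$, the observation that the rotation term annihilates radial functions so the problem collapses to the $2n$-dimensional harmonic-oscillator heat equation, the Mehler/Riccati solution, and Fourier inversion --- and this architecture is sound; your identification of Mehler's formula as the only substantive input is accurate, and the convergence remark for the inversion is correct.

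The one place your write-up does not close is exactly the place you flag: the constants. With this paper's conventions (group law carrying the factor $2$, vector fields $X_i=\partial_{x_i}-2y_i\partial_\tau$), the potential in $\mathcal{L}_\lambda$ is $4\lambda^2|z|^2$, the oscillator frequency is $2|\lambda|$, and Mehler produces $\sinh(4t\lambda)$ and $\coth(4t\lambda)$. Your proposed repair --- the substitution $\lambda\mapsto4\lambda$ ``with the Jacobian absorbed into the constant'' --- does not reproduce the stated formula, because that substitution also turns $e^{i\lambda\tau}$ into $e^{i\lambda\tau/4}$; it rescales the central variable, not merely the prefactor. In fact the displayed formula cannot be exactly right under the paper's own normalizations: integrating it over $\mathbb{H}^n$ (the $\tau$-integral yields $2\pi\delta_0(\lambda)$, and $\lambda/\sinh(t\lambda)\to 1/t$, $\lambda/(4\tanh(t\lambda))\to 1/(4t)$) gives $\frac{2\pi\,(4\pi t)^n\,t^{-n}}{(2\pi)^{n+2}2^n}=(2\pi)^{-1}$, contradicting Proposition~\ref{properties}~$(v)$, which asserts $\int_{\mathbb{H}^n}h_t=1$. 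So the mismatch you noticed is real and cannot be removed by bookkeeping alone: the quoted kernel corresponds to a different normalization of the group law (Randall's), and either the overall constant or the argument of $\sinh$ must be adjusted to fit the conventions fixed in Section~\ref{sec2}. Your method is the right one; to make the proof complete you should carry the Mehler computation through in the paper's conventions and state the resulting constant, rather than forcing a match with the quoted one.
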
 
\begin{proposition}\label{pointwiseupperbound} \cite[Theorems~2,4]{Jerison} or \cite[Theorems~IV.4.2-4.3]{Varapoulos}${}$\\
Let $h_t$ be the heat kernel associated with $-\Delta_{\mathbb{H}}$. Then there exist two positive constants $c_\star< C_\star$ such that the estimate
\[
c_\star t^{-\frac{Q}{2}} \exp\left( -\frac{C_\star |\eta|_H^2}{t} \right) 
\leq h_t(\eta) 
\leq C_\star t^{-\frac{Q}{2}} \exp\left( -\frac{c_\star |\eta|_H^2}{t} \right) 
\]
holds true, for any $t > 0$ and $\eta \in \mathbb{H}^n$.
\end{proposition}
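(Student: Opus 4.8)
The plan is to reduce everything to the fixed time $t=1$ by homogeneity, and then obtain the two–sided Gaussian estimate from an on–diagonal bound, an integrated maximum principle for the upper bound, and the parabolic Harnack inequality for the lower bound. \emph{Reduction to $t=1$.} By property $(\mathrm{vii})$ of Proposition~\ref{properties} we have $h_t(\eta)=t^{-Q/2}h_1(\delta_{1/\sqrt t}\eta)$, and since the Kor\'anyi norm is $\delta_\lambda$-homogeneous of degree one, $|\delta_{1/\sqrt t}\eta|_{\mathbb{H}}^2=|\eta|_{\mathbb{H}}^2/t$. Hence the claimed bounds for all $t>0$ are equivalent to the single estimate $c_\star\exp(-C_\star|\eta|_{\mathbb{H}}^2)\le h_1(\eta)\le C_\star\exp(-c_\star|\eta|_{\mathbb{H}}^2)$, the prefactor $t^{-Q/2}$ and the exponent $|\eta|_{\mathbb{H}}^2/t$ being generated automatically by scaling. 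For the on-diagonal input, the Jacobian of $\delta_\lambda$ being $\lambda^Q$ yields polynomial volume growth $|B_{\mathbb{H}}(\eta,r)|\sim r^Q$; combined with the sub-elliptic Nash/Sobolev inequality this gives ultracontractivity $\|S_{\mathbb{H}}(t)\|_{L^1\to L^\infty}\lesssim t^{-Q/2}$, so that $\sup_\eta h_t(\eta)\lesssim t^{-Q/2}$.

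For the \emph{upper Gaussian bound} I would run the Davies--Grigor'yan integrated maximum principle. Fixing $\beta>0$ and a Lipschitz function $\psi$ with $|\nabla_{\mathbb{H}}\psi|\le\beta$ approximating $\beta\,d_{\mathbb{H}}(0,\cdot)$, the weighted energy $E(t)=\int_{\mathbb{H}^n}e^{2\psi}(S_{\mathbb{H}}(t)f)^2\,d\eta$ satisfies, after integrating $\Delta_{\mathbb{H}}$ by parts, $E'(t)\le\beta^2E(t)$. This produces weighted $L^2\to L^2$ bounds carrying the factor $e^{C\beta^2t}$; optimizing over $\beta$ and combining with the on-diagonal $L^1\to L^2$ and $L^2\to L^\infty$ estimates yields $h_t(\eta)\lesssim t^{-Q/2}\exp(-|\eta|_{\mathbb{H}}^2/(Ct))$, where the equivalence of $d_{\mathbb{H}}$ with the Carnot--Carath\'eodory distance is used to convert the distance weight into the Kor\'anyi norm.

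For the \emph{lower Gaussian bound} I would first localize mass: from the upper bound just proved and the conservation of mass $(\mathrm{v})$, at least half of the mass of $h_t$ lies in $B_{\mathbb{H}}(0,K\sqrt t)$, and since $|B_{\mathbb{H}}(0,K\sqrt t)|\sim t^{Q/2}$, Cauchy--Schwarz together with the symmetry and semigroup laws $(\mathrm{viii})$--$(\mathrm{ix})$ gives the on-diagonal lower bound $h_{2t}(0)=\int_{\mathbb{H}^n}h_t^2\,d\eta\gtrsim t^{-Q/2}$. I would then invoke the parabolic Harnack inequality for $\partial_t-\Delta_{\mathbb{H}}$ and chain it along a Carnot--Carath\'eodory geodesic joining $0$ to $\eta$, subdivided into $\sim 1+|\eta|_{\mathbb{H}}^2/t$ comparison steps, to propagate the on-diagonal lower bound to $h_t(\eta)\gtrsim t^{-Q/2}\exp(-C|\eta|_{\mathbb{H}}^2/t)$.

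The hard part will be the sub-elliptic machinery underlying the lower bound, namely the parabolic Harnack inequality, whose proof requires Moser iteration adapted to the Heisenberg setting through the sub-elliptic Poincar\'e and Sobolev inequalities; a secondary technical difficulty is the non-smoothness of the Carnot--Carath\'eodory distance, which forces one to work with Lipschitz approximants when invoking the gradient bound $|\nabla_{\mathbb{H}}d_{\mathbb{H}}|\le 1$, and to control the constants in the equivalence $d_{\mathbb{H}}\sim|\cdot|_{\mathbb{H}}$. A more computational alternative would instead start from the explicit integral representation recalled above and extract the decay in the $\tau$-direction by deforming the $\lambda$-contour and applying a saddle-point analysis; in that approach the oscillatory factor $e^{i\lambda\tau}$ is the principal obstacle.
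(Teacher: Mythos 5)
The paper offers no proof of this proposition: it is imported verbatim from the cited references (Jerison--S\'anchez-Calle; Varopoulos--Saloff-Coste--Coulhon), and your outline reproduces exactly the argument given there --- dilation reduction to $t=1$ via Proposition~\ref{properties}(vii) and the homogeneity of $|\cdot|_{\mathbb{H}}$, a Nash-type on-diagonal bound from $|B_{\mathbb{H}}(\eta,r)|\sim r^Q$, Davies' weighted-energy perturbation for the Gaussian upper bound, and an on-diagonal lower bound propagated by Harnack chaining along the Carnot--Carath\'eodory distance (equivalent to the Kor\'anyi gauge) for the lower bound. Your sketch is therefore correct and consistent with the paper's treatment, with the understanding that the parabolic Harnack inequality and the sub-elliptic Sobolev/Poincar\'e inequalities remain substantial imported inputs rather than steps you actually carry out.
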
 
\begin{proposition}\label{pointwiseheatestimation}\cite[Theorem~1]{Jerison} or \cite[Theorem~IV.4.2]{Varapoulos}${}$\\
Let $h_t$ be the heat kernel associated with $-\Delta_{\mathbb{H}}$. Then there exist positive constants $c_1$ and $C_{I,l}$ depending on $-\Delta_{\mathbb{H}}$ such that
$$|\partial_t^lX_I h_t(\eta)|\leq C_{I,l}t^{-l-\frac{|I|}{2}-\frac{Q}{2}}e^{-\frac{c_1|\eta|^2_{_{\mathbb{H}}}}{t}},\qquad\hbox{for all}\,\,\,\eta\in\mathbb{H}^n,\,\,t>0,$$
where $I=(i_1,\dots,i_m)$ with $|I|=m$ and $X_I=X_{i_1}X_{i_2}\dots X_{i_m}$.
\end{proposition}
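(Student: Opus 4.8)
The plan is to prove the bound in two stages: first reduce, via the parabolic dilation structure of $\mathbb{H}^n$, to the single time level $t=1$, and then establish at that fixed scale a Gaussian pointwise bound for $\partial_t^l X_I h_1$ by combining interior subelliptic parabolic regularity for $\partial_t-\Delta_{\mathbb{H}}$ with the kernel upper bound of Proposition \ref{pointwiseupperbound}.

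For the reduction, I would first record that each left-invariant field $X_i$ and $Y_i$ is $\delta_\lambda$-homogeneous of degree one, i.e. $X_i(f\circ\delta_\lambda)=\lambda\,(X_i f)\circ\delta_\lambda$, which is immediate from $X_i=\partial_{x_i}-2y_i\partial_\tau$ and $\delta_\lambda(x,y,\tau)=(\lambda x,\lambda y,\lambda^2\tau)$; consequently $X_I$ is homogeneous of degree $|I|$. Writing $u(\eta,t)=h_t(\eta)$, the self-similarity $h_{r^2t}(\delta_r\eta)=r^{-Q}h_t(\eta)$ of Proposition \ref{properties}$(vii)$ reads $u\circ\Phi_r=r^{-Q}u$ with $\Phi_r(\eta,t)=(\delta_r\eta,r^2 t)$. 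Applying $\partial_t^l X_I$ and using that $\partial_t$ picks up a factor $r^2$ and $X_I$ a factor $r^{|I|}$ under $\Phi_r$, I obtain the exact scaling identity
\[
(\partial_t^l X_I h_{t})(\eta)=t^{-\frac{Q}{2}-l-\frac{|I|}{2}}\,(\partial_t^l X_I h_{1})\big(\delta_{1/\sqrt t}\,\eta\big),\qquad t>0,\ \eta\in\mathbb{H}^n .
\]
Since the Kor\'anyi norm is itself $\delta_\lambda$-homogeneous of degree one, $|\delta_{1/\sqrt t}\eta|_{\mathbb{H}}^2=|\eta|_{\mathbb{H}}^2/t$, so the claimed estimate follows at once from the fixed-scale bound $|\partial_t^l X_I h_1(\zeta)|\le C\,e^{-c_1|\zeta|_{\mathbb{H}}^2}$ for all $\zeta\in\mathbb{H}^n$.

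It remains to prove this $t=1$ bound. Here I would invoke the interior regularity estimate for the hypoelliptic heat operator: since $\{X_i,Y_i\}$ satisfy H\"ormander's bracket condition, $\partial_t-\Delta_{\mathbb{H}}$ is hypoelliptic, and on a parabolic Kor\'anyi cylinder $Q_r(\eta_0)=B_{\mathbb{H}}(\eta_0,r)\times(1-r^2,1]$ every solution $v$ of $\partial_t v=\Delta_{\mathbb{H}}v$ satisfies $|\partial_t^l X_I v(\eta_0,1)|\le C_{l,I}\,r^{-2l-|I|}\sup_{Q_r(\eta_0)}|v|$. Applying this with $v=h_t$ and $r=1/2$ gives $|\partial_t^l X_I h_1(\eta_0)|\lesssim \sup_{Q_{1/2}(\eta_0)}|h_t|$, and Proposition \ref{pointwiseupperbound} controls the right-hand side: on $Q_{1/2}(\eta_0)$ one has $t\ge 3/4$ and, by the triangle inequality for $d_{\mathbb{H}}$, $|\eta|_{\mathbb{H}}\ge|\eta_0|_{\mathbb{H}}-1/2$, whence $h_t(\eta)\le C_\star t^{-Q/2}e^{-c_\star|\eta|_{\mathbb{H}}^2/t}\le C\,e^{-c_1|\eta_0|_{\mathbb{H}}^2}$ after a near/far split in $|\eta_0|_{\mathbb{H}}$ (for $|\eta_0|_{\mathbb{H}}$ bounded the exponential is harmless and $t^{-Q/2}\le(3/4)^{-Q/2}$; for $|\eta_0|_{\mathbb{H}}$ large one uses $(|\eta_0|_{\mathbb{H}}-1/2)^2\ge|\eta_0|_{\mathbb{H}}^2/2$). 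Combined with the scaling identity, this gives the proposition.

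The main obstacle is the interior regularity estimate, which is where the genuinely subelliptic nature of the problem enters: unlike the Euclidean case there is no elementary kernel representation for the horizontal derivatives $X_I h_t$, and the estimate rests on the hypoellipticity of $\partial_t-\Delta_{\mathbb{H}}$ together with the group's translation- and dilation-invariance (this is precisely the content of \cite{Jerison,Varapoulos}). An alternative, more computational route would differentiate the explicit oscillatory representation of $h_t$ directly and estimate by contour deformation and stationary phase, but extracting Gaussian decay in the anisotropic Kor\'anyi norm from that integral is delicate, so I would prefer the regularity-based argument above.
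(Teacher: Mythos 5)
The paper does not actually prove this proposition: it is imported verbatim from \cite[Theorem~1]{Jerison} and \cite[Theorem~IV.4.2]{Varapoulos}, so there is no internal argument to compare yours against. Your two-stage reduction is nevertheless the standard route by which such bounds are established, and both stages are correct as written: the degree-one homogeneity of $X_i$, $Y_i$ under $\delta_\lambda$ together with the self-similarity in Proposition \ref{properties}$(vii)$ does yield the exact identity $(\partial_t^lX_Ih_t)(\eta)=t^{-Q/2-l-|I|/2}(\partial_t^lX_Ih_1)(\delta_{1/\sqrt t}\,\eta)$, and your near/far split, combined with the fact that the Kor\'anyi gauge induces a genuine metric (so the triangle inequality you invoke is legitimate), correctly converts the bound of Proposition \ref{pointwiseupperbound} on a unit parabolic cylinder into Gaussian decay of $\partial_t^lX_Ih_1$ in $|\eta_0|_{_{\mathbb{H}}}$.

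The one place where your argument is not self-contained is the scale-invariant interior estimate $|\partial_t^lX_Iv(\eta_0,1)|\le C_{l,I}\,r^{-2l-|I|}\sup_{Q_r(\eta_0)}|v|$ for solutions of $\partial_tv=\Delta_{\mathbb{H}}v$. You state it, correctly identify it as the genuinely subelliptic ingredient, and attribute it to the cited sources, but you do not prove it; H\"ormander hypoellipticity by itself gives only qualitative smoothness, and upgrading that to a quantitative sup-bound with the stated power of $r$ requires either a normalization argument on the unit cylinder followed by left translation and parabolic dilation, or the parametrix construction of \cite{Jerison}. Since that estimate is precisely the content of the theorems the paper cites, your proposal should be read as a correct reduction of the proposition to its literature inputs rather than an independent proof --- a reasonable stance given that the paper offers no proof either, but you should say explicitly that the interior estimate is assumed, not derived.
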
 
The following result is a direct consequence of Proposition \ref{pointwiseheatestimation}.
\begin{lemma}\label{estimateheatkernel}
Let $h_t$ be the heat kernel associated with $-\Delta_{\mathbb{H}}$. Then there exist positive constants $C_1$ and $C_2$ depending on $-\Delta_{\mathbb{H}}$ such that
$$\left\|\nabla_{\mathbb{H}} h_t(\cdotp)\right\|_1\leq C_1 t^{-\frac{1}{2}},\qquad \left\|T h_t(\cdotp)\right\|_1\leq C_2 t^{-\frac{1}{2}},\qquad t>0,$$
where $\nabla_{\mathbb{H}}$ is defined by \eqref{48} and $T=\partial_\tau$.
\end{lemma}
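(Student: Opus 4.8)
The plan is to derive the $L^1$ bounds for $\nabla_{\mathbb{H}} h_t$ and $T h_t$ directly from the pointwise Gaussian-type estimates of Proposition \ref{pointwiseheatestimation} by integrating over $\mathbb{H}^n$ and exploiting the parabolic scaling structure. First I would record the relevant special cases of Proposition \ref{pointwiseheatestimation}: each component $X_i h_t$ and $Y_i h_t$ corresponds to a multi-index $I$ with $|I|=1$ and $l=0$, so it satisfies $|X_I h_t(\eta)|\leq C\,t^{-\frac12-\frac{Q}{2}}e^{-c_1|\eta|_{\mathbb{H}}^2/t}$. For the term $T h_t=\partial_\tau h_t$, I would note that although $\partial_\tau$ is not itself one of the horizontal fields $X_i,Y_i$, the commutator structure $[X_i,Y_i]=4T$ expresses $T$ as a combination of second-order horizontal derivatives, so $T h_t$ inherits the bound with two extra half-powers of $t^{-1}$, namely $|T h_t(\eta)|\leq C\,t^{-1-\frac{Q}{2}}e^{-c_1|\eta|_{\mathbb{H}}^2/t}$, matching the case $|I|=2$; alternatively one applies Proposition \ref{pointwiseheatestimation} with $l=1$, $|I|=0$ to bound $\partial_t h_t$ and uses $\Delta_{\mathbb{H}}h_t=\partial_t h_t$ together with the representation \eqref{40} of $\Delta_{\mathbb{H}}$. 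Either route yields the correct $t$-power, which is the crucial point.

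Next I would integrate these pointwise bounds. Writing $\eta=(x,y,\tau)$ and using the Kor\'anyi norm $|\eta|_{\mathbb{H}}=((|x|^2+|y|^2)^2+\tau^2)^{1/4}$, the key computation is the change of variables dictated by the parabolic dilation $\delta_{\sqrt{t}}$. Setting $x=\sqrt{t}\,x'$, $y=\sqrt{t}\,y'$, $\tau=t\,\tau'$, the Jacobian is $t^{(2n+2)/2}=t^{Q/2}$, while $|\eta|_{\mathbb{H}}^2/t=|\eta'|_{\mathbb{H}}^2$ is scale-invariant under this substitution. Hence
\begin{equation*}
\int_{\mathbb{H}^n}t^{-\frac12-\frac{Q}{2}}e^{-c_1|\eta|_{\mathbb{H}}^2/t}\,d\eta
= t^{-\frac12-\frac{Q}{2}}\cdot t^{\frac{Q}{2}}\int_{\mathbb{H}^n}e^{-c_1|\eta'|_{\mathbb{H}}^2}\,d\eta'
= C_1\,t^{-\frac12},
\end{equation*}
where $C_1=\int_{\mathbb{H}^n}e^{-c_1|\eta'|_{\mathbb{H}}^2}\,d\eta'<\infty$ because the Gaussian weight in the Kor\'anyi norm is integrable (one checks finiteness by passing to polar-type coordinates adapted to $\delta_\lambda$, or simply bounds $e^{-c_1|\eta'|_{\mathbb{H}}^2}$ by an ordinary Euclidean Gaussian in the region $\{\tau\lesssim |x|^2+|y|^2\}$ and handles the complementary region where $|\eta'|_{\mathbb{H}}^2\gtrsim|\tau|$ separately). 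Summing the finitely many horizontal components $X_i,Y_i$ gives $\|\nabla_{\mathbb{H}}h_t\|_1\leq C_1 t^{-1/2}$. The identical substitution applied to the $T h_t$ bound produces $t^{-1-\frac{Q}{2}}\cdot t^{\frac{Q}{2}}=t^{-1}$ times the finite Gaussian integral, which would give $\|T h_t\|_1\leq C_2 t^{-1}$.

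I expect the main obstacle to be reconciling the claimed exponent $t^{-1/2}$ for $\|T h_t\|_1$ with the scaling heuristic, which on the face of it yields $t^{-1}$ from the naive bound $|T h_t|\lesssim t^{-1-Q/2}e^{-c_1|\eta|^2/t}$. The resolution is that $T=\partial_\tau$ has homogeneous degree $2$ under $\delta_\lambda$ but the $\tau$-variable itself scales like $t$, so the $L^1$ norm of $\partial_\tau h_t$ scales differently from the naive $L^\infty$-times-volume count; more precisely, from $h_{r^2 t}(\delta_r\eta)=r^{-Q}h_t(\eta)$ (Proposition \ref{properties}$(vii)$) one differentiates in $\tau$ to get $(\partial_\tau h)_{r^2t}(\delta_r\eta)\,r^2=r^{-Q}(\partial_\tau h_t)(\eta)$, and integrating $|T h_{r^2 t}|$ over $\mathbb{H}^n$ with the dilation change of variables (Jacobian $r^Q$) shows $\|T h_{r^2 t}\|_1=r^{-2}\|T h_t\|_1$, i.e. $\|T h_t\|_1$ scales like $t^{-1}$. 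Thus the scaling argument firmly predicts the exponent $-1$, and I would therefore suspect that the intended statement should read $\|T h_t\|_1\leq C_2 t^{-1}$; I would carry out the integration carefully to confirm which exponent is correct and flag the discrepancy, since the scaling identity via Proposition \ref{properties}$(vii)$ is unambiguous and is the cleanest way to pin down the $t$-dependence without estimating constants.
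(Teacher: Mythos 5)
Your approach is exactly the one the paper intends: the paper offers no proof beyond the sentence ``The following result is a direct consequence of Proposition \ref{pointwiseheatestimation}'', and your derivation of $\|\nabla_{\mathbb{H}}h_t\|_1\leq C_1t^{-1/2}$ (apply the pointwise bound with $l=0$, $|I|=1$, then the parabolic substitution $x=\sqrt{t}\,x'$, $y=\sqrt{t}\,y'$, $\tau=t\tau'$ with Jacobian $t^{Q/2}$ and the scale invariance $|\eta|_{\mathbb{H}}^2/t=|\eta'|_{\mathbb{H}}^2$) is precisely that intended argument, including the observation that $e^{-c_1|\eta'|_{\mathbb{H}}^2}\leq e^{-c_1(|x'|^2+|y'|^2)/\sqrt{2}}e^{-c_1|\tau'|/\sqrt{2}}$ is integrable.

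More importantly, the discrepancy you flag for the second inequality is real and your scaling argument settles it. Since $T=\tfrac14[X_1,Y_1]$ has homogeneous degree $2$, Proposition \ref{pointwiseheatestimation} with $|I|=2$ (or $l=1$, $|I|=0$ combined with the heat equation) gives $|Th_t(\eta)|\lesssim t^{-1-Q/2}e^{-c_1|\eta|_{\mathbb{H}}^2/t}$, hence $\|Th_t\|_1\lesssim t^{-1}$; and differentiating Proposition \ref{properties}$(vii)$ in $\tau$ and changing variables yields the exact identity $\|Th_t\|_1=t^{-1}\|Th_1\|_1$ with $\|Th_1\|_1>0$ (the explicit kernel formula shows $h_1$ genuinely depends on $\tau$). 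Therefore the bound $\|Th_t\|_1\leq C_2t^{-1/2}$ as printed fails as $t\to0^+$ and the correct exponent is $-1$; this is an error in the paper's statement rather than a gap in your argument. (The lemma does not appear to be invoked anywhere later in the paper, so nothing downstream is affected, but the statement should be corrected to $\|Th_t\|_1\leq C_2t^{-1}$.)
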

\begin{lemma}(Young's inequality)\label{Younginequality}\cite[Proposition~1.8]{Follandstein}${}$\\
Suppose $1\leq p,q,r\leq \infty$ and $\frac{1}{p}+\frac{1}{q}=1+\frac{1}{r}$. If $f\in L^p(\mathbb{H}^n)$ and $g\in L^q(\mathbb{H}^n)$, then $f\ast_{_{\mathbb{H}}}g\in L^r(\mathbb{H}^n)$ and the following inequality holds
$$\|f\ast_{_{\mathbb{H}}}g\|_r\leq \|f\|_p\|g\|_q.$$
\end{lemma}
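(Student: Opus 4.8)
The plan is to give a self-contained proof resting on only two structural features of $\mathbb{H}^n$ together with H\"older's inequality. The first feature is that the Lebesgue measure $d\eta=dx\,dy\,d\tau$ is a bi-invariant Haar measure on $\mathbb{H}^n$ (the group is unimodular) which is moreover invariant under the inversion $\xi\mapsto\xi^{-1}=-\xi$. This is immediate from the explicit group law of Section \ref{sec2}: for fixed $\eta$ the left translation $\xi\mapsto\eta\circ\xi$ is an affine map of $\mathbb{R}^{2n+1}$ with unit Jacobian, the same holds for right translation, and inversion has Jacobian of modulus one. Consequently, for each fixed $\eta$ the change of variables $\xi\mapsto\eta\circ\xi^{-1}$ preserves Lebesgue measure, and likewise $\eta\mapsto\eta\circ\xi^{-1}$ for each fixed $\xi$; in particular
$$\int_{\mathbb{H}^n}|f(\eta\circ\xi^{-1})|^p\,d\xi=\|f\|_p^p\qquad\hbox{and}\qquad\int_{\mathbb{H}^n}|f(\eta\circ\xi^{-1})|^p\,d\eta=\|f\|_p^p,$$
for every $1\le p<\infty$. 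Throughout I may assume $f,g\ge 0$, replacing them by $|f|,|g|$.

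I first dispose of the degenerate cases. When $r=\infty$, one has $\tfrac1p+\tfrac1q=1$, and applying H\"older's inequality in $\xi$ to $(f\ast_{_{\mathbb{H}}}g)(\eta)=\int_{\mathbb{H}^n} f(\eta\circ\xi^{-1})g(\xi)\,d\xi$ together with the invariance above gives the bound at once. When $p=1$ (so $q=r$) or $q=1$ (so $p=r$), Minkowski's integral inequality applied to $\int f(\xi)g(\xi^{-1}\circ\eta)\,d\xi$ and the invariance yield the claim directly. It remains to treat $1<p,q<\infty$, in which case $\tfrac1r=\tfrac1p+\tfrac1q-1\in(0,1)$, so $1<r<\infty$. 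Set $a=\tfrac1r$ and factor the integrand pointwise in $\eta$ as
$$f(\eta\circ\xi^{-1})\,g(\xi)=\Big(f(\eta\circ\xi^{-1})^{p}\,g(\xi)^{q}\Big)^{a}\,f(\eta\circ\xi^{-1})^{\,1-ap}\,g(\xi)^{\,1-aq},$$
where $1-ap>0$ and $1-aq>0$. Applying H\"older's inequality with the three exponents $r$, $p/(1-ap)$ and $q/(1-aq)$ — which are conjugate precisely because $a=\tfrac1p+\tfrac1q-1$ — and evaluating the last two factors via the invariance identity, I obtain for each $\eta$
$$(f\ast_{_{\mathbb{H}}}g)(\eta)\le\Big(\int_{\mathbb{H}^n}f(\eta\circ\xi^{-1})^{p}\,g(\xi)^{q}\,d\xi\Big)^{1/r}\,\|f\|_p^{\,1-ap}\,\|g\|_q^{\,1-aq}.$$

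To finish, I raise this inequality to the power $r$, integrate in $\eta$, and invoke Tonelli's theorem (legitimate since all quantities are nonnegative) to exchange the order of integration:
$$\|f\ast_{_{\mathbb{H}}}g\|_r^{\,r}\le\|f\|_p^{(1-ap)r}\,\|g\|_q^{(1-aq)r}\int_{\mathbb{H}^n}g(\xi)^{q}\Big(\int_{\mathbb{H}^n}f(\eta\circ\xi^{-1})^{p}\,d\eta\Big)d\xi.$$
For fixed $\xi$ the inner integral equals $\|f\|_p^p$ by the measure-invariance of $\eta\mapsto\eta\circ\xi^{-1}$, and the remaining $\xi$-integral equals $\|g\|_q^q$. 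Since $ar=1$, a direct check gives $(1-ap)r+p=r$ and $(1-aq)r+q=r$, whence $\|f\ast_{_{\mathbb{H}}}g\|_r^{\,r}\le\|f\|_p^{\,r}\|g\|_q^{\,r}$, which is the desired estimate.

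The only genuinely non-Euclidean point — and the place I expect to spend the most care — is the verification of the invariance properties on the non-abelian group $\mathbb{H}^n$, namely that both $\xi\mapsto\eta\circ\xi^{-1}$ and $\eta\mapsto\eta\circ\xi^{-1}$ preserve Lebesgue measure; once unimodularity and inversion-invariance are secured, the remainder is the classical H\"older/Tonelli bookkeeping valid on any locally compact unimodular group. A cleaner but less explicit alternative would be to establish only the three endpoint cases and then recover the full range by Riesz--Thorin interpolation applied to the bilinear map $(f,g)\mapsto f\ast_{_{\mathbb{H}}}g$; I would favour the direct argument above since it also yields the pointwise bound on $(f\ast_{_{\mathbb{H}}}g)(\eta)$, which is convenient elsewhere in the paper.
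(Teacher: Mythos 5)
Your proof is correct. Note first that the paper does not prove this lemma at all: it is quoted verbatim from Folland--Stein \cite[Proposition~1.8]{Follandstein}, so there is no internal argument to compare against; what you have written is essentially the standard proof from that reference, valid on any unimodular locally compact group. All the key points check out: the group law shows that left translation, right translation and inversion on $\mathbb{H}^n=\mathbb{R}^{2n+1}$ are affine maps with Jacobian of modulus one, so Lebesgue measure is bi-invariant and inversion-invariant, which legitimises both invariance identities you use; the three H\"older exponents $r$, $p/(1-ap)$, $q/(1-aq)$ with $a=1/r$ are conjugate exactly because $\frac1p+\frac1q=1+\frac1r$, and the positivity $1-ap=p/q'>0$, $1-aq=q/p'>0$ holds in the range $1<p,q<\infty$; the exponent bookkeeping $(1-ap)r+p=r$ and $(1-aq)r+q=r$ is right; and the endpoint cases $r=\infty$, $p=1$, $q=1$ are correctly dispatched by H\"older and Minkowski's integral inequality. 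The one benefit of your direct argument over the bare citation (or over the Riesz--Thorin alternative you mention) is the intermediate pointwise bound on $(f\ast_{_{\mathbb{H}}}g)(\eta)$, though the paper itself only ever invokes the norm inequality.
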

Thus, by applying Young's inequality (Lemma \ref{Younginequality}) and using Proposition \ref{pointwiseheatestimation} along with the homogeneity property 
$$|(\lambda x,\lambda y,\lambda^2 \tau)|_{_{\mathbb{H}}}=\lambda |\eta|_{_{\mathbb{H}}},\quad \hbox{for}\,\, \eta=(x,y,\tau)\in\mathbb{H}^n,\,\,\lambda>0,$$ 
we obtain the following estimate.
\begin{lemma}\label{Lp-Lqestimate}($L^p-L^q$ estimate)${}$\\
 Suppose $1\leq p\leq q\leq \infty$. Then there exists a positive constant $C$ such that for every $f\in L^p(\mathbb{H}^n)$, the following inequality holds:
$$\|S_{\mathbb{H}}(t)f\|_q\leq C\,t^{-\frac{Q}{2}(\frac{1}{p}-\frac{1}{q})}\|f\|_p,\qquad t>0.$$
\end{lemma}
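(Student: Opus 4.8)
The plan is to prove the $L^p$--$L^q$ estimate stated in Lemma \ref{Lp-Lqestimate} by combining Young's convolution inequality (Lemma \ref{Younginequality}) with a sharp $L^m$-bound on the heat kernel $h_t$ obtained from the pointwise Gaussian estimate of Proposition \ref{pointwiseupperbound} (equivalently from Proposition \ref{pointwiseheatestimation} with $l=|I|=0$) together with the scaling homogeneity of the Kor\'anyi norm. The key observation is that since $S_{\mathbb{H}}(t)f=h_t\ast_{_{\mathbb{H}}}f$ by Proposition \ref{properties}$(iv)$, the desired $L^p\to L^q$ mapping is controlled by $\|h_t\|_m$ for the exponent $m$ dictated by Young's inequality.

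First I would fix $1\le p\le q\le\infty$ and choose $m\in[1,\infty]$ satisfying the Young relation $\tfrac1p+\tfrac1m=1+\tfrac1q$; note this forces $m\ge 1$ precisely because $p\le q$, so the hypothesis is exactly what is needed for $m$ to be admissible. Applying Lemma \ref{Younginequality} then gives
\begin{equation*}
\|S_{\mathbb{H}}(t)f\|_q=\|h_t\ast_{_{\mathbb{H}}}f\|_q\le\|h_t\|_m\,\|f\|_p.
\end{equation*}
It therefore remains to show $\|h_t\|_m\le C\,t^{-\frac{Q}{2}(1-\frac1m)}$, and to check that the exponent $\frac{Q}{2}(1-\frac1m)$ coincides with $\frac{Q}{2}(\frac1p-\frac1q)$, which is immediate from the Young relation rewritten as $1-\tfrac1m=\tfrac1p-\tfrac1q$.

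Next I would estimate $\|h_t\|_m$. Using the upper bound $h_t(\eta)\le C_\star t^{-Q/2}\exp(-c_\star|\eta|_{_{\mathbb{H}}}^2/t)$ from Proposition \ref{pointwiseupperbound}, I raise to the power $m$ and integrate over $\mathbb{H}^n$. The homogeneity $|\delta_\lambda\eta|_{_{\mathbb{H}}}=\lambda|\eta|_{_{\mathbb{H}}}$ suggests the change of variables $\eta=\delta_{\sqrt t}\,\xi=(\sqrt t\,x',\sqrt t\,y',t\,\tau')$, whose Jacobian is $t^{Q/2}$ (since the Jacobian of $\delta_\lambda$ is $\lambda^Q$ with $\lambda=\sqrt t$). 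This rescaling turns the Gaussian into a $t$-independent, integrable function of $\xi$, and one collects the powers of $t$: the factor $t^{-Qm/2}$ from the kernel prefactor and $t^{Q/2}$ from the Jacobian yield $\|h_t\|_m^m\le C\,t^{-Qm/2+Q/2}$, hence $\|h_t\|_m\le C\,t^{-\frac{Q}{2}(1-\frac1m)}$, with the finiteness of the constant coming from the Gaussian integrability $\int_{\mathbb{H}^n}e^{-c_\star m|\xi|_{_{\mathbb{H}}}^2}\,d\xi<\infty$. The endpoint cases $m=\infty$ (i.e. $p=q$) and $m=1$ require only the trivial bound $\|h_t\|_\infty\le C_\star t^{-Q/2}$ and the mass normalization $\|h_t\|_1=1$ from Proposition \ref{properties}$(v)$--$(vi)$, respectively.

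I do not anticipate a genuine obstacle here, since all ingredients are already assembled in the preliminaries; the only point demanding care is the bookkeeping of exponents—verifying that the Young exponent $m$ and the scaling power combine to give exactly $\frac{Q}{2}(\frac1p-\frac1q)$—and confirming that the Gaussian integral defining the constant is finite uniformly in $t$ after rescaling. The main conceptual step is recognizing that the correct time-scaling of $\|h_t\|_m$ is governed by the \emph{homogeneous} dimension $Q=2n+2$ rather than the topological dimension $2n+1$, which is precisely what the anisotropic dilation $\delta_{\sqrt t}$ encodes.
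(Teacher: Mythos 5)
Your proposal is correct and follows exactly the route the paper indicates: it derives the estimate from Young's convolution inequality combined with the pointwise Gaussian upper bound on $h_t$ and the anisotropic scaling $\eta=\delta_{\sqrt t}\,\xi$ with Jacobian $t^{Q/2}$, which is precisely the argument the paper sketches in the sentence preceding the lemma. The only (immaterial) difference is that you quote the upper bound from Proposition \ref{pointwiseupperbound} rather than the $l=|I|=0$ case of Proposition \ref{pointwiseheatestimation}, and your exponent bookkeeping $1-\tfrac1m=\tfrac1p-\tfrac1q$ and the endpoint cases are all handled correctly.
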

\subsection{Test functions}
In preparation for later use, we introduce and describe  the properties of the cut-off functions $\{\varphi_R\}_{R>0}$
defined as 
\begin{equation}\label{varphi_R}
\varphi_R(\eta)= \Phi^{\ell}\left(\xi_R(\eta)\right),\quad \xi_R(\eta)=\frac{|\eta|_{_{\mathbb{H}}}^2}{R}\quad\eta=(x,y,\tau)\in\mathbb{H}^n,
\end{equation}
where $\ell=2p'$, $p'=p/(p-1)$ is the H\"older conjugate of $p$, and $\Phi\in \mathcal{C}^\infty(\mathbb{R})$ is a smooth non-increasing function 
satisfying $\mathbbm{1}_{(-\infty,\frac{1}{2}]} \leq \Phi\leq \mathbbm{1}_{(-\infty,1]}$.  Furthermore, we define 
$$\varphi_R^*(\eta)=\Phi^\ell_*\left(\xi_R(\eta)\right),\quad\hbox{where}\quad \Phi_*=\mathbbm{1}_{[\frac{1}{2},1]}\Phi.$$
\begin{lemma}\label{lem:testfunction}
Define the family of cut-off functions $\{\varphi_R\}_{R>0}$
as in \eqref{varphi_R}.
Then the following inequality holds:
\[
|\Delta_{\mathbb{H}}\varphi_R(\eta)|\leq \frac{C}{R}
\left(\varphi_R^*(\eta)\right)^{\frac{1}{p}}.
\]
\end{lemma}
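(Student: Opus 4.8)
The plan is to treat $\varphi_R=\Phi^\ell\circ\xi_R$ as a composition and invoke the chain rule for the sub-Laplacian: for scalar-valued $\xi_R$ and smooth $f$, $\Delta_{\mathbb{H}}(f\circ\xi_R)=f'(\xi_R)\,\Delta_{\mathbb{H}}\xi_R+f''(\xi_R)\,|\nabla_{\mathbb{H}}\xi_R|^2$. This reduces the whole lemma to computing the two geometric quantities $\Delta_{\mathbb{H}}\xi_R$ and $|\nabla_{\mathbb{H}}\xi_R|^2$ attached to the Kor\'anyi gauge. Writing $N=|\eta|_{_{\mathbb{H}}}$ and $\rho^2=|x|^2+|y|^2$, so that $N^4=\rho^4+\tau^2$ is a smooth (polynomial) function, I would obtain these by first differentiating $N^4$ with the fields $X_i,Y_i$ and then passing to $N^2=R\,\xi_R$ by one more application of the chain rule.

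First I would compute, from $X_i=\partial_{x_i}-2y_i\partial_\tau$ and $Y_i=\partial_{y_i}+2x_i\partial_\tau$, the horizontal derivatives $X_iN^4=4(\rho^2x_i-\tau y_i)$ and $Y_iN^4=4(\rho^2y_i+\tau x_i)$; summing the squares gives $|\nabla_{\mathbb{H}}N^4|^2=16N^4\rho^2$, while a second differentiation yields $\Delta_{\mathbb{H}}N^4=8(n+2)\rho^2$. Applying the chain rule to $N^2=(N^4)^{1/2}$ then produces $|\nabla_{\mathbb{H}}N^2|^2=4\rho^2$ and, after the cancellation that reflects the $\Delta_{\mathbb{H}}$-harmonicity of the gauge at exponent $2-Q$, $\Delta_{\mathbb{H}}N^2=2Q\,\rho^2/N^2$. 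Since $\xi_R=N^2/R$, this gives $|\nabla_{\mathbb{H}}\xi_R|^2=4\rho^2/R^2=(4\psi/R)\,\xi_R$ and $\Delta_{\mathbb{H}}\xi_R=(2Q/R)\,\psi$, where $\psi:=\rho^2/N^2$. The crucial point is the uniform bound $0\le\psi\le1$, valid because $N^4=\rho^4+\tau^2\ge\rho^4$; this is exactly where the anisotropy of the Heisenberg setting is kept under control, and it is what replaces the trivial Euclidean estimates.

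It then remains to substitute these into the chain-rule identity with $f=\Phi^\ell$, for which $f'=\ell\Phi^{\ell-1}\Phi'$ and $f''=\ell(\ell-1)\Phi^{\ell-2}(\Phi')^2+\ell\Phi^{\ell-1}\Phi''$. Every resulting term carries a prefactor $R^{-1}$ (using $\psi\le1$ and, where $\xi_R$ appears, $\xi_R\le1$ on the support) and a power $\Phi^a$ with $a\ge\ell-2$, hence bounded by $\Phi^{\ell-2}$ since $0\le\Phi\le1$. Moreover $\Phi',\Phi''$ are bounded and supported in $\{\xi_R\in[1/2,1]\}$, where $\Phi(\xi_R)=\Phi_*(\xi_R)$ by the definition $\Phi_*=\mathbbm{1}_{[1/2,1]}\Phi$; therefore $|\Delta_{\mathbb{H}}\varphi_R|\le (C/R)\,\Phi_*^{\ell-2}(\xi_R)$. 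The estimate closes on the algebraic identity that pins down the choice $\ell=2p'$: indeed $\ell-2=2p'-2=2/(p-1)=2p'/p=\ell/p$, so that $\Phi_*^{\ell-2}=(\Phi_*^{\ell})^{1/p}=(\varphi_R^*)^{1/p}$, which is precisely the claimed inequality.

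The only genuinely delicate step is the first one: correctly evaluating $\Delta_{\mathbb{H}}N^2$ and $|\nabla_{\mathbb{H}}N^2|^2$ for the non-Euclidean gauge, in particular tracking the cancellations produced by the cross terms in $\Delta_{\mathbb{H}}$ and confirming the uniform bound $\psi\le1$. Once these gauge computations are secured, the passage to $\varphi_R$ is routine power-counting, and the exponent $\ell=2p'$ was chosen so that the lowest power $\Phi^{\ell-2}$ matches $(\varphi_R^*)^{1/p}$ exactly.
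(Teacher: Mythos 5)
Your proof is correct, and it reaches the estimate by a genuinely cleaner route than the paper. The paper expands $\Delta_{\mathbb{H}}\varphi_R$ term by term using the coordinate expression \eqref{40} (including all the mixed derivatives $\partial^2_{x_j\tau}$, $\partial^2_{y_j\tau}$ and the weight $4(|x|^2+|y|^2)\partial_\tau^2$), bounds everything by absolute values, and only then extracts the factor $R^{-1}$ via the anisotropic rescaling $\widetilde{x}=x/\sqrt{R}$, $\widetilde{y}=y/\sqrt{R}$, $\widetilde{\tau}=\tau/R$; the support of $\Phi_*$ then yields $(\varphi_R^*)^{1/p}$ exactly as in your last step. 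You instead use the intrinsic chain rule $\Delta_{\mathbb{H}}(f\circ\xi_R)=f'(\xi_R)\,\Delta_{\mathbb{H}}\xi_R+f''(\xi_R)\,|\nabla_{\mathbb{H}}\xi_R|^2$ (valid because $\Delta_{\mathbb{H}}$ is a sum of squares of derivations) and compute the two gauge quantities in closed form; I checked the identities $|\nabla_{\mathbb{H}}|\eta|_{\mathbb{H}}^4|^2=16|\eta|_{\mathbb{H}}^4\rho^2$, $\Delta_{\mathbb{H}}|\eta|_{\mathbb{H}}^4=8(n+2)\rho^2$, hence $|\nabla_{\mathbb{H}}|\eta|_{\mathbb{H}}^2|^2=4\rho^2$ and $\Delta_{\mathbb{H}}|\eta|_{\mathbb{H}}^2=2Q\rho^2/|\eta|_{\mathbb{H}}^2$, and they are all correct, as is the exponent bookkeeping $\ell-2=2p'/p$ that closes the argument. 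What your approach buys is explicit constants and a transparent replacement of the rescaling step by the single uniform bound $\rho^2/|\eta|_{\mathbb{H}}^2\leq 1$ (together with $\xi_R\leq 1$ on the support of $\Phi'$); what the paper's approach buys is robustness, since it never needs the exact closed-form expressions for the gauge and would survive a perturbation of $\xi_R$. The only cosmetic caveat, shared with the paper, is that $\Phi'_*$ should be read as $\mathbbm{1}_{[\frac{1}{2},1]}\Phi'$ rather than as a derivative of the discontinuous function $\Phi_*$; this does not affect the validity of either argument.
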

\begin{proof}
It is easy to see that 
\begin{eqnarray*}
\left|\Delta_{\mathbb{H}}\varphi_R(\eta)\right|&\leq&\left|\Delta_x\Phi^\ell\big(\xi_R(\eta)\big)\right|+\,\left|\Delta_y\Phi^{\ell}\big(\xi_R(\eta)\big)\right|+\,4(|x|^2+|y|^2)\left|\partial_\tau^2\Phi^{\ell}\big(\xi_R(\eta)\big)\right|\\
&{}&+\,4\sum_{j=1}^{n}|x_j|\left|\partial^2_{y_j\tau}\Phi^{\ell}\big(\xi_R(\eta)\big)\right| +\,4\sum_{j=1}^{n}|y_j|\left|\partial^2_{x_j\tau}\Phi^{\ell}\big(\xi_R(\eta)\big)\right|.
\end{eqnarray*}
So,
\begin{eqnarray*}
\left|\Delta_{\mathbb{H}}\varphi_R(\eta)\right|&\lesssim&\Phi^{\ell-2}_*\big(\xi_R(\eta)\big)\left(|\Phi'_*(\xi_R(\eta))|^2+|\Phi''_*(\xi_R(\eta))|\right)\left(\left|\nabla_x\xi_R(\eta)\right|^2+\left|\nabla_y\xi_R(\eta)\right|^2\right)\\
&{}&+\,\Phi^{\ell-1}_*\big(\xi_R(\eta)\big)|\Phi'_*(\xi_R(\eta))|\left(\left|\Delta_x\xi_R(\eta)\right|+\left|\Delta_y\xi_R(\eta)\right|\right)\\
&{}&+\,(|x|^2+|y|^2)\Phi^{\ell-2}_*\big(\xi_R(\eta)\big)\left(|\Phi'_*(\xi_R(\eta))|^2+|\Phi''_*(\xi_R(\eta))|\right)\left|\partial_\tau\xi_R(\eta)\right|^2\\
&{}&+\,(|x|^2+|y|^2)\Phi^{\ell-1}_*\big(\xi_R(\eta)\big)|\Phi'_*(\xi_R(\eta))|\left|\partial^2_\tau\xi_R(\eta)\right|\\
&{}&+\,\Phi^{\ell-2}_*\big(\xi_R(\eta)\big)\left(|\Phi'_*(\xi_R(\eta))|^2+|\Phi''_*(\xi_R(\eta))|\right)\left|\partial_\tau\xi_R(\eta)\right|\sum_{j=1}^{n}|x_j|\left|\partial_{y_j}\xi_R(\eta)\right|\\
&{}&+\,\Phi^{\ell-1}_*\big(\xi_R(\eta)\big)|\Phi'_*(\xi_R(\eta))|\sum_{j=1}^{n}|x_j|\left|\partial^2_{y_j\tau}\xi_R(\eta)\right|\\
&{}&+\,\Phi^{\ell-2}_*\big(\xi_R(\eta)\big)\left(|\Phi'_*(\xi_R(\eta))|^2+|\Phi''_*(\xi_R(\eta))|\right)\left|\partial_\tau\xi_R(\eta)\right|\sum_{j=1}^{n}|y_j|\left|\partial_{x_j}\xi_R(\eta)\right|\\
&{}&+\,\Phi^{\ell-1}_*\big(\xi_R(\eta)\big)|\Phi'_*(\xi_R(\eta))|\sum_{j=1}^{n}|y_j|\left|\partial^2_{x_j\tau}\xi_R(\eta)\right|.
\end{eqnarray*}
For any $\eta=(x,y,\tau)\in\mathbb{H}^n$, we introduce the rescaled variable $\widetilde{\eta}=(\widetilde{x},\widetilde{y},\widetilde{\tau})\in\mathbb{H}^n$ where
$$\widetilde{x}=\frac{x}{\sqrt{R}},\qquad\widetilde{y}=\frac{y}{\sqrt{R}},\qquad\widetilde{\tau}=\frac{\tau}{R},$$
which yields
\begin{eqnarray*}
\left|\Delta_{\mathbb{H}}\varphi_R(\eta)\right|&\lesssim&R^{-1}\Phi^{\ell-2}_*\big(|\widetilde{\eta}|^2\big)\left(|\Phi'_*(|\widetilde{\eta}|^2)|^2+|\Phi''_*(|\widetilde{\eta}|^2)|\right)+R^{-1}\,\Phi^{\ell-1}_*\big(|\widetilde{\eta}|^2\big)|\Phi'_*(|\widetilde{\eta}|^2)|.
\end{eqnarray*}
By taking into account the fact that $\Phi_*\leq1$, $\Phi_*\in C^\infty$ and the support of $\Phi_*$, a direct computation shows that
$$\left|\Delta_{\mathbb{H}}\varphi_R(\eta)\right|\lesssim R^{-1} \left(\varphi_R^*(\eta)\right)^{\frac{1}{p}}.$$
\end{proof}
\subsection{Fractional Calculus}
\begin{definition}
A function $\mathcal{A} :[a,b]\rightarrow\mathbb{R}$, $-\infty<a<b<\infty$, is said to be absolutely continuous if and only if there exists $\psi\in L^1(a,b)$ such that
\[
\mathcal{A} (t)=\mathcal{A} (a)+\int_{a}^t \psi(s)\,ds.
\]
$AC[a,b]$ denotes the space of these functions.  Moreover, 
\[
AC^2[a,b]:=\left\{\varphi:[a,b]\rightarrow\mathbb{R}\,\,\hbox{such that}\,\,\varphi'\in
AC[a,b]\right\}.
\]
\end{definition}

\begin{definition}
Let $f\in L^1(c,d)$, $-\infty<c<d<\infty$. The left- and right-sided Riemann-Liouville fractional integrals are defined by
\begin{equation}\label{I1}
I^\alpha_{c|t}f(t):=\frac{1}{\Gamma(\alpha)}\int_{c}^t(t-s)^{-(1-\alpha)}f(s)\,ds, \quad t>c, \; \alpha\in(0,1),
\end{equation}
and
\begin{equation}\label{I2}
I^\alpha_{t|d}f(t):=\frac{1}{\Gamma(\alpha)}\int_t^{d}(s-t)^{-(1-\alpha)}f(s)\,ds, \quad t<d, \; \alpha\in(0,1)
\end{equation}
where $\Gamma$ is the Euler gamma function.
\end{definition}

\begin{definition}
Let $f\in AC[c,d]$, $-\infty<c<d<\infty$. The left- and right-sided Riemann-Liouville fractional derivatives  are defined by
\begin{equation}\label{}
D^\alpha_{c|t}f(t):=\frac{d}{dt}I^{1-\alpha}_{c|t}f(t)=\frac{1}{\Gamma(1-\alpha)}\frac{d}{dt}\int_{c}^t(t-s)^{-\alpha}f(s)\,ds, \quad t>c,  \; \alpha\in(0,1),
\end{equation}
and
\begin{equation}\label{}
D^\alpha_{t|d}f(t):=-\frac{d}{dt}I^{1-\alpha}_{t|d}f(t)=-\frac{1}{\Gamma(1-\alpha)}\frac{d}{dt}\int_t^{d}(s-t)^{-\alpha}f(s)\,ds, \quad t<d, \; \alpha\in(0,1).
\end{equation}
\end{definition}

\begin{proposition}\cite[(2.64), p. 46]{SKM}\\
Let $\alpha\in(0,1)$ and $-\infty<c<d<\infty$. The fractional integration by parts formula
\begin{equation}\label{IP}
\int_{c}^{d}f(t)D^\alpha_{c|t}g(t)\,dt \;=\; \int_{c}^{d}
g(t)D^\alpha_{t|d}f(t)\,dt
\end{equation}
is satisfied for every $f\in I^\alpha_{t|d}(L^p(c,d))$, $g\in I^\alpha_{c|t}(L^q(c,d))$ such that $\frac{1}{p}+\frac{1}{q}\leq 1+\alpha$, $p,q>1$, where
\[
I^\alpha_{c|t}(L^q(0,T)):=\left\{f= I^\alpha_{c|t}h,\,\, h\in L^q(c,d)\right\},
\]
and
\[
I^\alpha_{t|d}(L^p(c,d)):=\left\{f= I^\alpha_{t|d}h,\,\, h\in L^p(c,d)\right\}.
\]
\end{proposition}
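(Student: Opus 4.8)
The plan is to strip the two fractional derivatives off of \eqref{IP} by means of the inversion identity $D^\alpha I^\alpha=\mathrm{Id}$, thereby reducing the claim to the adjoint (Fubini) relation for the Riemann--Liouville fractional integrals. Concretely, the hypotheses furnish representations $f=I^\alpha_{t|d}h$ with $h\in L^p(c,d)$ and $g=I^\alpha_{c|t}k$ with $k\in L^q(c,d)$. Using the semigroup property $I^{1-\alpha}_{c|t}I^\alpha_{c|t}=I^1_{c|t}$ together with the definition $D^\alpha_{c|t}=\frac{d}{dt}I^{1-\alpha}_{c|t}$, I would compute
\[
D^\alpha_{c|t}g=\frac{d}{dt}I^{1-\alpha}_{c|t}I^\alpha_{c|t}k=\frac{d}{dt}I^1_{c|t}k=k,
\]
and, in the same way, $D^\alpha_{t|d}f=-\frac{d}{dt}I^1_{t|d}h=h$. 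Substituting these into \eqref{IP}, the asserted identity collapses to the single statement
\[
\int_c^d \bigl(I^\alpha_{t|d}h\bigr)(t)\,k(t)\,dt=\int_c^d \bigl(I^\alpha_{c|t}k\bigr)(t)\,h(t)\,dt.
\]

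To establish this adjoint relation I would write out the left-hand side using \eqref{I1} and interchange the order of integration over the triangle $\{c<s<t<d\}$:
\[
\frac{1}{\Gamma(\alpha)}\int_c^d h(t)\int_c^t (t-s)^{\alpha-1}k(s)\,ds\,dt=\frac{1}{\Gamma(\alpha)}\int_c^d k(s)\int_s^d (t-s)^{\alpha-1}h(t)\,dt\,ds.
\]
Recognizing the inner integral on the right as $\Gamma(\alpha)\bigl(I^\alpha_{s|d}h\bigr)(s)$ by \eqref{I2}, the right-hand side becomes $\int_c^d k(s)\bigl(I^\alpha_{s|d}h\bigr)(s)\,ds$, which is exactly the reduced identity, completing the argument.

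The main obstacle is the rigorous justification of the interchange of integrals, since the kernel $(t-s)^{\alpha-1}$ is singular along the diagonal; this is precisely where the constraint $\frac1p+\frac1q\le 1+\alpha$ is used. By the Hardy--Littlewood theorem on a bounded interval, the operator $I^\alpha_{c|t}$ maps $L^q(c,d)$ boundedly into $L^{p'}(c,d)$ whenever $\frac{1}{p'}\ge \frac1q-\alpha$, which is equivalent to $\frac1p+\frac1q\le 1+\alpha$. Hence $I^\alpha_{c|t}|k|\in L^{p'}(c,d)$, and Hölder's inequality gives
\[
\int_c^d |h(t)|\,\bigl(I^\alpha_{c|t}|k|\bigr)(t)\,dt\le \|h\|_{p}\,\bigl\|I^\alpha_{c|t}|k|\bigr\|_{p'}<\infty,
\]
so the double integral converges absolutely and Fubini's theorem applies, legitimizing the exchange above and thus the whole computation.
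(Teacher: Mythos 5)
Your proof is correct, and it is essentially the standard argument behind the cited formula (2.64) of Samko--Kilbas--Marichev: the paper itself gives no proof and simply quotes the reference, where the identity is likewise obtained by writing $f=I^\alpha_{t|d}h$, $g=I^\alpha_{c|t}k$, cancelling the derivatives via $D^\alpha I^\alpha=\mathrm{Id}$, and reducing to the adjoint relation $\int_c^d (I^\alpha_{t|d}h)\,k\,dt=\int_c^d (I^\alpha_{c|t}k)\,h\,dt$ justified by Fubini together with the Hardy--Littlewood mapping $I^\alpha:L^q\to L^{p'}$ under $\tfrac1p+\tfrac1q\le 1+\alpha$. Your handling of the borderline case is also consistent with the hypothesis $p,q>1$, which excludes the only configuration where the critical Hardy--Littlewood embedding would fail.
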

\begin{remark}
A simple sufficient condition for functions $f$ and $g$ to satisfy (\ref{IP}) is that $f,g\in C[c,d],$ such that
$D^\alpha_{t|d}f(t),D^\alpha_{c|t}g(t)$ are continuous on $[c,d]$.
\end{remark}

\begin{proposition}\cite[Chapter~1]{SKM}\\
For $0<\alpha<1$, $-\infty<c<d<\infty$, we have the following identities
\begin{equation}\label{I3}
    D^\alpha_{c|t}I^\alpha_{c|t}\varphi(t)=\varphi(t),\,\hbox{a.e. $t\in(c,d)$}, \quad\hbox{for all}\,\varphi\in L^r(c,d), 1\leq r\leq\infty,
\end{equation}
and
\begin{equation}\label{I4}
   -D D^\alpha_{t|d}\varphi=D^{1+\alpha}_{t|d}\varphi,\quad \varphi\in AC^2[c,d],
\end{equation}
where $D:=\frac{d}{dt}$.
\end{proposition}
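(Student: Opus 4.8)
The plan is to treat the two identities separately: \eqref{I3} is the left-inverse (inversion) property of the Riemann--Liouville integral, whereas \eqref{I4} is a consolidation of iterated classical derivatives once the convention for non-integer orders above $1$ is fixed.

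For \eqref{I3}, I would first establish the composition rule $I^{1-\alpha}_{c|t}I^{\alpha}_{c|t}\varphi=I^{1}_{c|t}\varphi$. Inserting the definition \eqref{I1} twice and applying Fubini's theorem, which is legitimate since the double integral is absolutely convergent for $\varphi\in L^r(c,d)\subset L^1(c,d)$ against a locally integrable kernel, one is led to the inner Beta integral
$$\int_{s}^{t}(t-\sigma)^{-\alpha}(\sigma-s)^{\alpha-1}\,d\sigma=B(\alpha,1-\alpha)=\Gamma(\alpha)\Gamma(1-\alpha),$$
obtained via the substitution $\sigma=s+(t-s)\theta$. After dividing by the normalising constant $\Gamma(\alpha)\Gamma(1-\alpha)$, this collapses to $I^{1}_{c|t}\varphi(t)=\int_{c}^{t}\varphi(s)\,ds$. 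Since $\varphi\in L^1(c,d)$, the map $t\mapsto\int_{c}^{t}\varphi(s)\,ds$ is absolutely continuous and, by the Lebesgue differentiation theorem, differentiable almost everywhere with derivative $\varphi(t)$. Recalling that $D^{\alpha}_{c|t}=\frac{d}{dt}I^{1-\alpha}_{c|t}$, we conclude
$$D^{\alpha}_{c|t}I^{\alpha}_{c|t}\varphi(t)=\frac{d}{dt}\int_{c}^{t}\varphi(s)\,ds=\varphi(t)\qquad\text{a.e. }t\in(c,d),$$
which is \eqref{I3}.

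For \eqref{I4}, I would work from the standard convention for a right-sided derivative of order $1+\alpha\in(1,2)$, namely $D^{1+\alpha}_{t|d}\varphi=\left(-\frac{d}{dt}\right)^{2}I^{\,2-(1+\alpha)}_{t|d}\varphi=\frac{d^{2}}{dt^{2}}I^{1-\alpha}_{t|d}\varphi$, since $2-(1+\alpha)=1-\alpha$. Using the definition $D^{\alpha}_{t|d}\varphi=-\frac{d}{dt}I^{1-\alpha}_{t|d}\varphi$, a direct computation gives
$$-D\,D^{\alpha}_{t|d}\varphi=-\frac{d}{dt}\left(-\frac{d}{dt}I^{1-\alpha}_{t|d}\varphi\right)=\frac{d^{2}}{dt^{2}}I^{1-\alpha}_{t|d}\varphi=D^{1+\alpha}_{t|d}\varphi,$$
the sign factor $(-1)^{2}=1$ producing the claimed identity. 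The hypothesis $\varphi\in AC^{2}[c,d]$ is precisely what makes these manipulations rigorous: it ensures that $I^{1-\alpha}_{t|d}\varphi$ is twice differentiable in the appropriate sense and that the successive classical differentiations are well defined almost everywhere.

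The step I expect to be the main obstacle lies in \eqref{I3}: the delicate points are justifying the interchange of the order of integration for a merely $L^r$ (indeed only $L^1$) datum, and the passage from the composed integral to the pointwise almost-everywhere derivative, which rests on the Lebesgue differentiation theorem rather than on a classical fundamental theorem of calculus. Identity \eqref{I4}, by contrast, is essentially bookkeeping of derivatives and signs, with the $AC^2$ regularity serving only to legitimise the iterated differentiation.
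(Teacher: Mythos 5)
Your proof is correct. The paper does not prove this proposition at all --- it is quoted directly from Samko--Kilbas--Marichev \cite{SKM} --- and your argument is precisely the standard one from that reference: the semigroup identity $I^{1-\alpha}_{c|t}I^{\alpha}_{c|t}=I^{1}_{c|t}$ via Tonelli/Fubini and the Beta integral $B(\alpha,1-\alpha)=\Gamma(\alpha)\Gamma(1-\alpha)$ (legitimate since $(c,d)$ is bounded, so $L^r(c,d)\subset L^1(c,d)$), followed by Lebesgue differentiation for \eqref{I3}, and the definitional unwinding $D^{1+\alpha}_{t|d}=(-D)^2 I^{1-\alpha}_{t|d}=D^2 I^{1-\alpha}_{t|d}=-D\,D^{\alpha}_{t|d}$ for \eqref{I4}. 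The only point you assert rather than prove is that $\varphi\in AC^2[c,d]$ makes $I^{1-\alpha}_{t|d}\varphi$ twice differentiable; if you want this explicit, integrate by parts twice to write $I^{1-\alpha}_{t|d}\varphi$ as boundary terms plus $I^{3-\alpha}_{t|d}\varphi''$, each summand visibly having an absolutely continuous first derivative.
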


Given $T>0$, let us define the function $w_1$ by
\begin{equation}\label{w1}
\displaystyle w_1(t)=\left(1-t/T\right)^\sigma,\quad  t \in [0, T], \; \sigma\gg1.
\end{equation}
The following properties of $w_1$ will be used in the sequel.
\begin{lemma}\cite[(2.45), p. 40]{SKM}\label{L1}\\
Let $T>0$, $0<\alpha<1$. For all $t\in[0,T]$, we have
\begin{equation}\label{P1}
D_{t|T}^\alpha
w_1(t)=\frac{\Gamma(\sigma+1)}{
\Gamma(\sigma+1-\alpha)}T^{-\alpha}(1-t/T)^{\sigma-\alpha},
\end{equation}
and
\begin{equation}\label{P3}
D_{t|T}^{1+\alpha}
w_1(t)=\frac{\Gamma(\sigma+1)}{
\Gamma(\sigma-\alpha)}T^{-(1+\alpha)}(1-t/T)^{\sigma-\alpha-1}.
\end{equation}
\end{lemma}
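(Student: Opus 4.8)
The plan is to compute the right-sided Riemann--Liouville fractional derivative directly from its definition $D^\alpha_{t|T} = -\frac{d}{dt} I^{1-\alpha}_{t|T}$: first evaluate the fractional integral $I^{1-\alpha}_{t|T} w_1$ in closed form by reducing its defining integral to an Euler Beta integral, then differentiate in $t$ and negate to obtain \eqref{P1}. The second identity \eqref{P3} will follow almost immediately from \eqref{P1} together with the relation \eqref{I4}.

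For the fractional integral, writing $1 - s/T = T^{-1}(T-s)$ in \eqref{w1} and using the defining formula \eqref{I2} with exponent $1-\alpha$ gives
\begin{equation*}
I^{1-\alpha}_{t|T} w_1(t) = \frac{T^{-\sigma}}{\Gamma(1-\alpha)} \int_t^T (s-t)^{-\alpha}(T-s)^\sigma \, ds.
\end{equation*}
The affine substitution $s = t + (T-t)\rho$, $\rho \in [0,1]$, turns the integral into $(T-t)^{\sigma-\alpha+1}\int_0^1 \rho^{-\alpha}(1-\rho)^\sigma\,d\rho = (T-t)^{\sigma-\alpha+1}\, B(1-\alpha,\sigma+1)$, and the Beta--Gamma identity $B(1-\alpha,\sigma+1) = \Gamma(1-\alpha)\Gamma(\sigma+1)/\Gamma(\sigma+2-\alpha)$ cancels the factor $\Gamma(1-\alpha)$ in the denominator. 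After restoring the normalization $(T-t)^{\sigma-\alpha+1} = T^{\sigma-\alpha+1}(1-t/T)^{\sigma-\alpha+1}$ and collecting the powers of $T$, one arrives at
\begin{equation*}
I^{1-\alpha}_{t|T} w_1(t) = T^{1-\alpha}\,\frac{\Gamma(\sigma+1)}{\Gamma(\sigma+2-\alpha)}\,(1-t/T)^{\sigma+1-\alpha}.
\end{equation*}

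Next I would differentiate in $t$ and negate. Since $\frac{d}{dt}(1-t/T)^{\sigma+1-\alpha} = -T^{-1}(\sigma+1-\alpha)(1-t/T)^{\sigma-\alpha}$, the two sign changes cancel, the powers of $T$ combine to $T^{-\alpha}$, and the factor $(\sigma+1-\alpha)$ merges with $\Gamma(\sigma+2-\alpha) = (\sigma+1-\alpha)\Gamma(\sigma+1-\alpha)$ through the recurrence $\Gamma(z+1)=z\Gamma(z)$, producing exactly \eqref{P1}. For \eqref{P3} I would invoke \eqref{I4} in the form $D^{1+\alpha}_{t|T} w_1 = -\frac{d}{dt} D^\alpha_{t|T} w_1$, differentiate the already-established expression \eqref{P1}, and apply the analogous recurrence $\Gamma(\sigma+1-\alpha) = (\sigma-\alpha)\Gamma(\sigma-\alpha)$ to match the claimed prefactor.

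The computation is essentially mechanical; the only two points requiring attention are recognizing the Beta integral after the affine substitution (and applying the correct Beta--Gamma identity) and verifying the hypotheses under which \eqref{I4} may be used. The latter requires $w_1 \in AC^2[0,T]$, and this is precisely where the standing assumption $\sigma \gg 1$ enters: it ensures that $w_1$ and $w_1'$ remain absolutely continuous up to the endpoint $t=T$, so that the derivative formulas and the interchange underlying \eqref{I4} are legitimate. Beyond this bookkeeping I do not expect any genuine obstacle.
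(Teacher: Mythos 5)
Your computation is correct: the Beta-integral evaluation of $I^{1-\alpha}_{t|T}w_1$, the differentiation, and the two applications of the recurrence $\Gamma(z+1)=z\Gamma(z)$ all check out and reproduce \eqref{P1} and \eqref{P3} exactly. The paper itself offers no proof of this lemma---it is quoted verbatim from \cite[(2.45), p.~40]{SKM}---so your argument is a self-contained verification along the standard lines of that reference, and your remark that $\sigma\gg1$ is what legitimizes the use of \eqref{I4} (via $w_1\in AC^2[0,T]$) is the right observation about where the hypothesis enters.
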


\begin{lemma}\label{lemma1}\cite[Lemma~2.6]{FRT}${}$\\
For $\varepsilon>0$, $A>0$, $c>0$, let
$$\Theta(\eta)=ce^{-\varepsilon\sqrt{A+(|x|^2+|y|^2)^2+\tau^2}},\qquad \eta=(x,y,\tau)\in \mathbb{H}^n.$$
Then
$$
\Delta_{\mathbb{H}}\Theta(\eta)\geq -2\varepsilon(Q+2)\Theta(\eta),\qquad\hbox{for all}\,\,\eta\in\mathbb{H}^n.
$$
\end{lemma}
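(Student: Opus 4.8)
The plan is to exploit the fact that $\Delta_{\mathbb{H}}=\sum_{i=1}^n(X_i^2+Y_i^2)$ is a sum of squares of first-order vector fields, so that the chain rule acts cleanly on the exponential. Writing $g(\eta)=\sqrt{A+(|x|^2+|y|^2)^2+\tau^2}$ so that $\Theta=ce^{-\varepsilon g}$, I would first record that for any vector field $Z$ without zeroth-order term one has $Z^2(e^{-\varepsilon g})=\big(-\varepsilon Z^2 g+\varepsilon^2(Zg)^2\big)e^{-\varepsilon g}$, since $Z$ acts as a derivation. Summing this over $Z\in\{X_i,Y_i\}$ yields the identity
$$\Delta_{\mathbb{H}}\Theta=\Big(-\varepsilon\,\Delta_{\mathbb{H}}g+\varepsilon^2\,|\nabla_{\mathbb{H}}g|^2\Big)\Theta,\qquad |\nabla_{\mathbb{H}}g|^2:=\sum_{i=1}^n\big((X_ig)^2+(Y_ig)^2\big)\geq0.$$
Since $\Theta>0$, the nonnegative $\varepsilon^2$ term may be discarded, reducing the whole claim to the pointwise upper bound $\Delta_{\mathbb{H}}g\leq 2(Q+2)$, which would give $\Delta_{\mathbb{H}}\Theta\geq-\varepsilon(\Delta_{\mathbb{H}}g)\Theta\geq-2\varepsilon(Q+2)\Theta$.

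The core of the argument is therefore the computation of $\Delta_{\mathbb{H}}g$, and the key simplification is that $g$ depends on $(x,y)$ only through $\rho:=|x|^2+|y|^2$, so I would treat $g=g(\rho,\tau)$. Substituting into \eqref{40} with $\partial_{x_i}\rho=2x_i$, $\partial_{y_i}\rho=2y_i$ shows that $\Delta_x g+\Delta_y g=4\rho\,g_{\rho\rho}+4n\,g_\rho$ and $4(|x|^2+|y|^2)\partial_\tau^2 g=4\rho\,g_{\tau\tau}$, while — crucially — the first-order cross terms cancel identically, since $x_i\partial^2_{y_i\tau}g-y_i\partial^2_{x_i\tau}g=2x_iy_i\,g_{\rho\tau}-2x_iy_i\,g_{\rho\tau}=0$. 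This collapses the sub-Laplacian to the radial-type expression $\Delta_{\mathbb{H}}g=4\rho\,g_{\rho\rho}+4n\,g_\rho+4\rho\,g_{\tau\tau}$.

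It then remains to evaluate the one-variable derivatives of $g=\sqrt{A+\rho^2+\tau^2}$. From $g_\rho=\rho/g$ and $g_\tau=\tau/g$ one gets $g_{\rho\rho}=(A+\tau^2)/g^3$ and $g_{\tau\tau}=(A+\rho^2)/g^3$; substituting and using $g^2=A+\rho^2+\tau^2$ to rewrite $2A+\rho^2+\tau^2=A+g^2$ yields, after collecting terms,
$$\Delta_{\mathbb{H}}g=\frac{4A\rho}{g^3}+\frac{4(n+1)\rho}{g}.$$
The final bound follows from the two elementary inequalities $\rho\leq g$ (since $g^2\geq\rho^2$) and $A\leq g^2$: the first term is at most $4\cdot(A/g^2)\cdot(\rho/g)\leq 4$, and the second is at most $4(n+1)$, so that $\Delta_{\mathbb{H}}g\leq 4(n+2)=2(Q+2)$, recalling $Q=2n+2$. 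Combining this with the identity of the first paragraph completes the proof.

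I anticipate that the only delicate point is the bookkeeping in the second paragraph, namely verifying the cancellation of the cross terms and correctly reducing the sub-Laplacian to the $(\rho,\tau)$ variables; once that reduction is secured, the remaining steps are routine differentiations followed by sharp-but-simple estimates. It is worth noting that the constants match exactly, so the threshold $2(Q+2)$ is attained rather than merely approached, and the hypothesis $A>0$ is what guarantees $g\geq\sqrt{A}>0$ and hence the smoothness of $g$ on all of $\mathbb{H}^n$.
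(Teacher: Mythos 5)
Your proof is correct. Note that the paper itself offers no proof of this lemma: it is quoted verbatim from \cite[Lemma~2.6]{FRT}, so there is nothing in the text to compare against; your argument stands as a self-contained verification. I checked the key steps: the derivation identity $Z^2(e^{-\varepsilon g})=(-\varepsilon Z^2g+\varepsilon^2(Zg)^2)e^{-\varepsilon g}$ is valid for the first-order fields $X_i,Y_i$; the cross terms in \eqref{40} do cancel for any function of $(\rho,\tau)$ with $\rho=|x|^2+|y|^2$, since $x_i\partial^2_{y_i\tau}g-y_i\partial^2_{x_i\tau}g=2x_iy_ig_{\rho\tau}-2x_iy_ig_{\rho\tau}=0$; and the resulting expression $\Delta_{\mathbb{H}}g=4A\rho g^{-3}+4(n+1)\rho g^{-1}$ is bounded by $4+4(n+1)=4(n+2)=2(Q+2)$ via $\rho\leq g$ and $A\leq g^2$, which is exactly the constant in the statement. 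The only quibble is your closing remark that the threshold $2(Q+2)$ is ``attained'': equality in $\rho\leq g$ forces $A+\tau^2=0$, which is impossible for $A>0$, so the bound is strict (indeed the supremum of $\Delta_{\mathbb{H}}g$ is $4(n+1)$, approached as $\rho\to\infty$ with $\tau=0$); this is cosmetic and does not affect the proof.
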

\begin{lemma}\cite[Proposition~2.2]{CDW}\label{lemma2} \\
	Let $T>0$, $0\leq \gamma<1$, $p>1$, $a>0$, and $b>0$, there exists a constant $K=K(a,b,p)$ such that, if $w\in C^1([0,T])$, $w>0$, and satisfies 
	$$w'(t)+a\,w(t)\geq b\int_0^t(t-s)^{-\gamma}w^p(s)\,ds,\quad\hbox{for all}\,\,T>0,$$
	then the following properties hold:
	\begin{itemize}
		\item[$\mathrm{(i)}$] If $T\geq 1$, then $w(0)<K$.
		\item[$\mathrm{(ii)}$] If $p\gamma\leq 1$, then $T<\infty$.
	\end{itemize}
\end{lemma}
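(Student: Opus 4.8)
The plan is to strip off the linear term $aw$ with an integrating factor, reduce to a pure Volterra-type integro-differential inequality, and then force finite-time blow-up by a bootstrap of successive lower bounds; both the size threshold for $w(0)$ in $\mathrm{(i)}$ and the role of $p\gamma\le1$ in $\mathrm{(ii)}$ will emerge from the rate at which these iterated lower bounds grow. For the reduction, set $v(t)=e^{at}w(t)$. Since $w>0$ and the right-hand side is nonnegative, the hypothesis gives $(e^{at}w)'=e^{at}(w'+aw)\ge0$, so $v$ is nondecreasing; in particular $w(t)\ge e^{-at}w(0)$ and $v(t)\ge v(0)=w(0)$. Rewriting the inequality in terms of $v$ yields
\[ v'(t)\ge b\int_0^t (t-s)^{-\gamma}e^{a(t-ps)}v^p(s)\,ds, \]
and on $[0,1]$ one has $t-ps\ge-p$, hence the clean model inequality $v'(t)\ge b'\int_0^t(t-s)^{-\gamma}v^p(s)\,ds$ with $b'=be^{-ap}>0$.

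\textbf{Part (i).} Work on $[0,1]$ with the model inequality and $m:=v(0)=w(0)$. Using monotonicity of $v$, a bootstrap gives $v(t)\ge A_k\,t^{\beta_k}$ with $\beta_{k+1}=p\beta_k+(2-\gamma)$, $\beta_0=0$, and $A_{k+1}=\kappa_k A_k^{p}$, where $\kappa_k$ is the ratio of Gamma functions produced by $\int_0^t(t-s)^{-\gamma}s^{p\beta_k}\,ds=B(1-\gamma,p\beta_k+1)\,t^{1-\gamma+p\beta_k}$. Thus $\beta_k=(2-\gamma)\frac{p^k-1}{p-1}$, and tracking the product of the $\kappa_k$ through $\Gamma$-ratio asymptotics one obtains
\[ v(t)\ \ge\ \big(C_1\,m\,t^{(2-\gamma)/(p-1)}\big)^{p^{k}}\,C_2\,t^{-(2-\gamma)/(p-1)} \]
for constants $C_1,C_2>0$ depending on $b',\gamma,p$. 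Hence $v(t)=+\infty$ whenever $C_1\,m\,t^{(2-\gamma)/(p-1)}>1$, i.e. $w$ cannot stay finite past $t_\ast\simeq(C_1m)^{-(p-1)/(2-\gamma)}$. If $m>K:=C_1^{-1}$ then $t_\ast<1$, contradicting $T\ge1$; therefore $w(0)<K$, with $K=K(a,b,p)$ (the dependence on $a$ entering through $b'$, hence through $C_1$). Equivalently, one may phrase this as a comparison with the self-similar model $y'=b'\int_0^t(t-s)^{-\gamma}y^p$, $y(0)=m$, whose blow-up time scales like $m^{-(p-1)/(2-\gamma)}$.

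\textbf{Part (ii).} Now no smallness is assumed, so we cannot confine ourselves to $[0,1]$ and must exploit $p\gamma\le1$ via the long-time accumulation of the memory term. Fix $\sigma_0>0$; from $w(t)\ge e^{-a(t-\sigma_0)}w(\sigma_0)$ one gets a uniform positive lower bound on $[\sigma_0,\sigma_0+1]$, and feeding it into $F(t)=\int_0^t(t-s)^{-\gamma}w^p(s)\,ds$ produces, after integrating $\frac{d}{dt}(e^{at}w)\ge be^{at}F(t)$ and using $\int^t e^{a\sigma}\sigma^{\rho}\,d\sigma\sim a^{-1}e^{at}t^{\rho}$, a genuine polynomial bound $w(t)\gtrsim t^{-\gamma}$ for large $t$. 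We then bootstrap: from $w(s)\gtrsim s^{\rho}$ the same computation gives $F(t)\gtrsim t^{\,1-\gamma+p\rho}$ and hence $w(t)\gtrsim t^{\,1-\gamma+p\rho}$, so the exponent updates by the affine map $\rho\mapsto1-\gamma+p\rho$. This map is repelling from its fixed point $\rho^\ast=-(1-\gamma)/(p-1)$ with multiplier $p>1$, and the starting value $\rho_0=-\gamma$ satisfies $\rho_0-\rho^\ast=(1-p\gamma)/(p-1)$, which is $>0$ precisely when $p\gamma<1$; hence the exponents $\rho_k$ increase without bound. A final iteration, identical in spirit to Part (i), converts this unbounded polynomial growth at a fixed time $t_1$ into $w(t_1)=+\infty$, so $T<\infty$.

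\textbf{Main obstacle.} The technical heart is controlling the multiplicative constants $\kappa_k$ (products of Beta functions) in both bootstraps, to certify that the iterated lower bounds genuinely diverge rather than being annihilated by decaying constants; this is handled by Stirling/$\Gamma$-ratio asymptotics showing $\log A_k\sim p^k(\log m-\mathrm{const})$. The second delicate point is the borderline case $p\gamma=1$ in Part (ii): there $\rho_0=\rho^\ast$, the exponents no longer grow, and the gain must instead be harvested as a logarithmic factor from $\int^t(t-s)^{-\gamma}s^{-1}\,ds\sim(\log t)\,t^{-\gamma}$, so the iteration is rerun on accumulating powers of $\log t$ until a positive polynomial exponent is recovered and the finite-time blow-up of Part (i) applies.
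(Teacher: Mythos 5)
First, a point of context: the paper does not prove this lemma at all --- it is quoted verbatim from [CDW, Proposition~2.2] --- so there is no in-paper proof to compare against. Your strategy (integrating factor $v=e^{at}w$, monotonicity of $v$, reduction on $[0,1]$ to $v'\ge b'\int_0^t(t-s)^{-\gamma}v^p\,ds$, then an iteration of power-type lower bounds whose multiplicative constants are tracked through Beta/Gamma asymptotics) is exactly the standard mechanism behind the cited result, and your Part (i) is correct as described: the recursion $\beta_{k+1}=p\beta_k+(2-\gamma)$, $A_{k+1}=\kappa_kA_k^{p}$ with $\log\kappa_k=O(k)$ gives $\log A_k=p^k(\log m-c_*)+o(p^k)$, hence divergence already at $t=1$ once $m=w(0)$ exceeds $e^{c_*}$.

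Two steps in Part (ii) do not work as literally written. (a) In the borderline case $p\gamma=1$, your endgame --- "the iteration is rerun on accumulating powers of $\log t$ until a positive polynomial exponent is recovered" --- is unachievable: since $\rho^\ast=-\gamma$ is the fixed point of $\rho\mapsto 1-\gamma+p\rho$, every iterate has the form $c_k\,t^{-\gamma}(\log t)^{\mu_k}$ with $\mu_{k+1}=p\mu_k+1$, and no finite number of logarithmic gains ever improves the power $t^{-\gamma}$, let alone makes it positive. The correct finish is the one you already use in Part (i): fix $t_1$ with $\log\log t_1$ sufficiently large and let $k\to\infty$; since $\mu_k\sim p^k/(p-1)$ while $\log c_k\gtrsim -C\,p^k$, the lower bounds $c_k\,t_1^{-\gamma}(\log t_1)^{\mu_k}$ diverge, contradicting $w(t_1)<\infty$. (b) Each bootstrap step in Part (ii) produces a bound valid only for $t\ge s_{k+1}$ with $s_{k+1}>s_k$ (you integrate the differential inequality starting from $s_k$ and need $t$ bounded away from $s_k$); if $s_k\to\infty$, "evaluating at a fixed $t_1$" is vacuous. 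You must either verify $\sup_k s_k<\infty$, or --- cleaner, and sufficient when $p\gamma<1$ --- stop at the first $k$ with $\rho_k>0$, conclude that $w$ is unbounded, pick $t_0$ with $w(t_0)>K$, and apply Part (i) to $\tilde w(t)=w(t_0+t)$, which satisfies the same inequality on $[0,1]$ because $\int_0^{t_0+t}\ge\int_{t_0}^{t_0+t}$. With these repairs the argument closes; both are within reach of the machinery you have already set up.
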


\section{Local existence}\label{sec3}
\begin{proof}[Proof of Theorem \ref{Local}]
	 For arbitrary $T>0,$ we define the Banach space
\[E_T:=\left\{u\in L^\infty((0,T),C_0(\mathbb{H}^n));\;
\vertiii{u}\leq 2\|u_0\|_{L^\infty}\right\},
\]
where
$\vertiii{u}:=\|\cdotp\|_{L^\infty((0,T),L^\infty(\mathbb{H}^n))}.$ Next, for
every $u\in E_T,$ we define
\[
\Phi(u)(t,\eta):=S_{\mathbb{H}}(t) u_0(\eta)+\Gamma(\alpha)\int_{0}^tS_{\mathbb{H}}(t-s)I_{0|s}^\alpha(|u|^{p-1}u)(s,\eta)\,ds.
\]
We prove the local existence by the Banach fixed point
theorem.\\
\noindent {\it Step 1. ${\bf\Phi:E_T\rightarrow E_T}$.} Let $u\in E_T,$ using Lemma \ref{Lp-Lqestimate}, we obtain
\begin{eqnarray*}
\vertiii{\Phi(u)} &\leq& \|u_0\|_\infty+\left\|
  \int_0^t\int_0^s(s-\sigma)^{-\gamma}\|u(\sigma)\|^p_\infty\,d\sigma\,ds\right\|_{L^\infty(0,T)}\\
   &=& \|u_0\|_\infty+\left\|
   \int_0^t\int_\sigma^t(s-\sigma)^{-\gamma}\|u(\sigma)\|^p_\infty\,ds\,d\sigma\right\|_{L^\infty(0,T)} \\
   &\leq&\|u_0\|_\infty+\frac{T^{2-\gamma}}{(1-\gamma)(2-\gamma)}\vertiii{u}^p\\
   &\leq&\|u_0\|_\infty+\frac{T^{2-\gamma}2^p\|u_0\|_{L^\infty}^{p-1}}
   {(1-\gamma)(2-\gamma)}\|u_0\|_\infty.
\end{eqnarray*}
Now, if we choose $T$ small enough such that
\begin{equation}\label{conditionssurT+}
\frac{T^{2-\gamma}2^p\|u_0\|_{\infty}^{p-1}}{(1-\gamma)(2-\gamma)}\leq1,
\end{equation}
we conclude that $\vertiii{\Phi(u)}\leq 2\|u_0\|_{\infty},$ and then $\Phi(u)\in E_T.$\\

\noindent {\it Step 2. $\Phi$ is a contraction.}  For $u,v\in E_T$, taking account of Lemma \ref{Lp-Lqestimate}, we have
\begin{eqnarray*}
 \vertiii{\Psi(u)-\Psi(v)}&\leq&\left\|
  \int_0^t\int_0^s(s-\sigma)^{-\gamma}
  \||u|^{p-1}u(\sigma)-|v|^{p-1}v(\sigma)\|_{\infty}\,d\sigma\,ds\right\|_{L^\infty(0,T)}\\
   &=&\left\|
   \int_0^t\int_\sigma^t(s-\sigma)^{-\gamma}\||u|^{p-1}u(\sigma)-
   |v|^{p-1}v(\sigma)\|_{\infty}\,ds\,d\sigma\right\|_{L^\infty(0,T)} \\
   &\leq&\frac{T^{2-\gamma}}{(1-\gamma)(2-\gamma)} \vertiii{|u|^{p-1}u-|v|^{p-1}v}  \\
   &\leq& \frac{C(p)2^p\|u_0\|_{\infty}^{p-1}T^{2-\gamma}}
   {(1-\gamma)(2-\gamma)} \vertiii{u-v} \\
   &\leq& \frac{1}{2} \vertiii{u-v},
\end{eqnarray*}
thanks to the following inequality
\begin{equation}\label{estimationimp}
||u|^{p-1}u-|v|^{p-1}v|\leq C(p)|u-v|(|u|^{p-1}+|v|^{p-1});
\end{equation}
$T$ is chosen such that
\begin{equation}\label{esti5+}
\frac{T^{2-\gamma}2^p\|u_0\|_{\infty}^{p-1}\max(2C(p),1)}{(1-\gamma)(2-\gamma)}\leq1.
\end{equation}
Then, by the Banach fixed point
theorem, there exists a unique mild solution $u\in E_T$ to the problem \eqref{1}.\\

\noindent {\it Step 3. Continuity.} One can easily check that $f\in L^1((0,T),C_0(\mathbb{H}^{n}))$, with
$$f(t):= \int_0^t(t-s)^{-\gamma}|u(s)|^{p-1}u(s) \, \mathrm{d}s,\qquad \hbox{for all}\,\,t\in (0,T).$$
 Applying Lemma 4.1.5 in \cite{CH}, using the continuity of the semigroup $S_{\mathbb{H}}(t)$,  we conclude that $u\in C([0,T],C_0(\mathbb{H}^{n}))$.\\
 
\noindent {\it Step 4. Uniqueness in $\Pi_T:=C([0,T],C_0(\mathbb{H}^n))$.} If $u,v$ are two mild solutions in $\Pi_T$ for some $T>0,$
using Lemma \ref{Lp-Lqestimate} and  \eqref{estimationimp}, we obtain
\begin{eqnarray*}
  \|u(t)-v(t)\|_{\infty}&\leq&C(p)2^p(\vertiii{u}^{p-1}+\vertiii{v}^{p-1})
  \int_0^t\int_0^s(s-\sigma)^{-\gamma}
  \|u(\sigma)-v(\sigma)\|_{\infty}\,d\sigma\,ds\\
   &=&C(p)2^p(\vertiii{u}^{p-1}+\vertiii{v}^{p-1})
   \int_0^t\int_\sigma^t(s-\sigma)^{-\gamma}\|u(\sigma)-
   v(\sigma)\|_{\infty}\,ds\,d\sigma \\
   &=&\frac{C(p)2^p(\vertiii{u}^{p-1}+\vertiii{v}^{p-1})}
   {1-\gamma} \int_0^t(t-\sigma)^{1-\gamma}\|u(\sigma)-
   v(\sigma)\|_{\infty}\,d\sigma\\
    &\leq&\frac{C(p)2^p(\vertiii{u}^{p-1}+\vertiii{v}^{p-1})T^{1-\gamma}}
   {1-\gamma} \int_0^t\|u(\sigma)-
   v(\sigma)\|_{\infty}\,d\sigma.
\end{eqnarray*}
So the uniqueness follows from Gronwall's inequality.

Using the uniqueness of solutions, we conclude the existence
of a unique maximal solution 
\[
u\in C([0,T_{\max}),C_0(\mathbb{H}^n)),
\]
where
\[
T_{\max}:=\sup\left\{T>0\;;\;\text{there exists a mild solution $u\in\Pi_T$
to \eqref{1}}\right\}\leq+\infty.
\]

\noindent {\it Step 5. Alternative.} We proceed by contradiction. If
$$\liminf_{t\rightarrow T_{\max}}\|u\|_{L^\infty((0,t),L^\infty(\mathbb{H}^{n}))}=:L<\infty,$$
then there exists a time sequence $\{t_m\}_{m\geq0}$ tending to $T_{\max}$ as $m\rightarrow\infty$ and such that
$$\sup_{m\in\mathbb{N}}\sup_{0<s\leq t_m}\|u(s)\|_{L^q}\leq L+1.$$
Moreover, if $0\leq t_m\leq t_m+\tau< T_{\max},$ using \eqref{IE}, we can write
\begin{eqnarray}\label{newIE+}
u(t_m+\tau)&=&S_{\mathbb{H}}(\tau)u(t_m)+\int_0^\tau S_{\mathbb{H}}(\tau-s)\int_0^s(s-\sigma)^{-\gamma}|u|^{p-1}u(t_m+\sigma)\,d\sigma\,ds\nonumber\\
&& +\int_0^\tau S_{\mathbb{H}}(\tau-s)\int_0^{t_m}(t_m+s-\sigma)^{-\gamma}|u|^{p-1}u(\sigma)\,d\sigma\,ds.
\end{eqnarray}
Using the fact that the last term on the right-hand side in \eqref{newIE+} depends only on the values of $u$ in the interval
$(0,t_m)$ and using again a fixed-point argument with $u(t_m)$ as initial condition, one can deduce that there exists $T(L + 1) > 0$, depends on $L+1$, such that the solution
$u(t)$ can be extended on the interval $[t_m, t_m + T(L + 1)]$ for any $m\geq0$. Thus, by the definition of the maximality time, $$T_{\max}\geq t_m+T(L+1),$$ for any $m\geq0$. We get the desired contradiction by letting $m\rightarrow\infty$.\\

\noindent {\it Step 6. Positivity of solutions.} If $u_0\geq0$ and $u_0\not\equiv0,$ then we can construct
a nonnegative solution on some interval $[0,T]$ by applying the
fixed point argument in the set $E_T^+=\{u\in E_T;\;u\geq0\}.$ In
particular, it follows from \eqref{IE} and Proposition \ref{pointwiseupperbound} that $$u(t)\geq S_{\mathbb{H}}(t)u_0>0\,\,\,\,\qquad
\text{on} \,\,\,\,(0,T].$$  It is not difficult by uniqueness to deduce that $u$ stays positive
on $(0,T_{\max}).$\\

\noindent {\it Step 7. Regularity.} If $u_0\in
L^r(\mathbb{H}^n)\cap C_0(\mathbb{H}^n),$ for $1\leq r<\infty,$ then by repeating the
fixed point argument in the space
\[
  E_{T,r}:= \{u\in L^\infty((0,T),C_0(\mathbb{H}^n)
  \cap L^r(\mathbb{H}^n));\;\vertiii{u}\leq 2\|u_0\|_{L^\infty},\|u\|_{\infty,r}\leq
   2\|u_0\|_{L^r}\},
\]
instead of $E_T,$ where $\|\cdotp\|_{\infty,r}:=\|\cdotp\|_{L^\infty((0,T),L^r(\mathbb{H}^n))},$
and by estimating $\|u^p\|_{L^r(\mathbb{H}^n)}$ by
$\|u\|^{p-1}_{L^\infty(\mathbb{H}^n)}\|u\|_{L^r(\mathbb{H}^n)}$ in
the contraction mapping argument, using \eqref{Lp-Lqestimate}, we obtain a
unique solution in $E_{T,r};$ we conclude then that
\[
u\in
C([0,T_{\max}),C_0(\mathbb{H}^n)\cap L^r(\mathbb{H}^n)).
\] The proof is complete.
\end{proof}

\section{Finite-time blow up}\label{sec4}
This section is devoted to proving the blow-up part of Theorem \ref{Blowup-global}. The idea of the proof is to use the variational formulation of the weak solution and choose an appropriate test function. We address the cases $p\gamma\leq 1$ and $p\leq p_\gamma$ separately.

\begin{proof}[The case $p\gamma\leq 1$]

	The proof is by contradiction. Suppose that $u$ is a global mild solution to \eqref{1}, then,  $u\geq0$ and
	\begin{equation}\label{IEE}
		u(t)=S_{\mathbb{H}}(t)u_0+\Gamma(\alpha)\int_0^tS_{\mathbb{H}}(t-s)I_{0|s}^\alpha
		(u^{p})(s)\,ds,\quad t\in[0,T],
	\end{equation}
	for all $T\gg1$, where $\alpha=1-\gamma$. Multiply \eqref{IEE} by $\Theta$, defined  in Lemma \ref{lemma1}, and integrating over $\mathbb{H}^n$, we have
	\[
	\int_{\mathbb{H}^n}u(t,\eta)\Theta(\eta)\,\mathrm{d\eta}= \int_{\mathbb{H}^n}S_{\mathbb{H}}(t)u_0(\eta)\Theta(\eta)\,\mathrm{d\eta}+\Gamma(\alpha)\int_{\mathbb{H}^n}\int_0^t S_{\mathbb{H}}(t-s)I_{0|s}^\alpha
	(u^p)(s,\eta)\Theta(\eta)\,ds\,\mathrm{d\eta}.
	\]
	We differentiate to obtain
	\begin{eqnarray}\label{differentiation}
		\frac{d}{\mathrm{dt}} \ \int_{\mathbb{H}^n}u(t,\eta)\Theta(\eta)\,\mathrm{d\eta}&=& \int_{\mathbb{H}^n} \frac{d}{\mathrm{dt}}\left(S_{\mathbb{H}}(t)u_0(\eta)\right)\Theta(\eta)\,\mathrm{d\eta}\\
		&&\,+\Gamma(\alpha)\int_{\mathbb{H}^n} \frac{d}{\mathrm{dt}}\int_0^t S_{\mathbb{H}}(t-s)I_{0|s}^\alpha(u^p)(s,\eta)\,ds\Theta(\eta)\,\mathrm{d\eta}.\nonumber
	\end{eqnarray}
	Using the properties of the semigroup $S(t)$, we have
	\begin{equation}\label{int1A}\begin{split}
		\int_{\mathbb{H}^n} \frac{d}{\mathrm{dt}}\left(S_{\mathbb{H}}(t)u_0(\eta)\right)\Theta(\eta)\,\mathrm{d\eta}&=\int_{\mathbb{H}^n} \Delta_{\mathbb{H}}\left(S_{\mathbb{H}}(t)u_0(\eta)\right)\Theta(\eta)\,\mathrm{d\eta}\\&=\int_{\mathbb{H}^n} S_{\mathbb{H}}(t)u_0(\eta)\Delta_{\mathbb{H}}\Theta(\eta)\,\mathrm{d\eta}.\end{split}
	\end{equation}
	Similarly,
	\begin{eqnarray}\label{int2A}
		&&\int_{\mathbb{H}^n} \frac{d}{\mathrm{dt}}\int_0^t S_{\mathbb{H}}(t-s)I_{0|s}^\alpha(u^p)(s,\eta)\,ds\Theta(\eta)\,\mathrm{d\eta}\nonumber\\
		&&=\int_{\mathbb{H}^n} I_{0|t}^\alpha(u^p)(t,\eta)\Theta(\eta)\,\mathrm{d\eta}+ \int_0^t \int_{\mathbb{H}^n}\frac{d}{\mathrm{dt}}\left(S_{\mathbb{H}}(t-s)I_{0|s}^\alpha(u^p)(s,\eta)\right)\Theta(\eta)\,\mathrm{d\eta}\,ds\nonumber\\
		&&=\int_{\mathbb{H}^n} I_{0|t}^\alpha(u^p)(t,\eta)\Theta(\eta)\,\mathrm{d\eta}+ \int_0^t \int_{\mathbb{H}^n}\Delta_{\mathbb{H}}\left(S_{\mathbb{H}}(t-s)I_{0|s}^\alpha(u^p)(s,\eta)\right)\Theta(\eta)\,\mathrm{d\eta}\,ds\nonumber\\
		&&=\int_{\mathbb{H}^n} I_{0|t}^\alpha(u^p)(t,\eta)\Theta(\eta)\,\mathrm{d\eta}+ \int_0^t \int_{\mathbb{H}^n}S_{\mathbb{H}}(t-s)I_{0|s}^\alpha(u^p)(s,\eta)\Delta_{\mathbb{H}}\Theta(\eta)\,\mathrm{d\eta}\,ds,
	\end{eqnarray}
	where we have used $I_{0|t}^\alpha (u^p)\in C([0,T];C_0(\mathbb{H}^n)).$\\
	Thus, using \eqref{IEE}, \eqref{int1A} and \eqref{int2A}, we conclude that \eqref{differentiation} implies
	\[
	\frac{d}{\mathrm{dt}} \ \int_{\mathbb{H}^n}u(t,\eta)\Theta(\eta)\,\mathrm{d\eta}=  \int_{\mathbb{H}^n}u(t,\eta)\Delta_{\mathbb{H}}\Theta(\eta)\,\mathrm{d\eta}+\int_0^t(t-s)^{-\gamma}\int_{\mathbb{H}^n} u^p(s,\eta)\Theta(\eta)\,\mathrm{d\eta}\,ds.
	\]
	Applying Lemma \ref{lemma1}, we choose $c>0$ so that the test function $\Theta$ is normalized, i.e.,
	$$ \int_{\mathbb{H}^n}\Theta(\eta)\,\mathrm{d\eta}=1,$$
	with $\varepsilon=1/(2Q+4)$. Under this choice, we have
	$$\Delta_{\mathbb{H}}\Theta(\eta)\geq-\Theta(\eta).$$ 
Therefore, by letting 
	$$f(t):= \displaystyle\int_{\mathbb{H}^n}u(t,\eta)\Theta(\eta)\,\mathrm{d\eta},$$ we arrive that
	$$f'(t)\geq -f(t)+\int_0^t(t-s)^{-\gamma}\int_{\mathbb{H}^n} u^p(s,\eta)\Theta(\eta)\,\mathrm{d\eta}\,ds.$$
	As $\displaystyle \int_{\mathbb{H}^n}\Theta (\eta)\,\mathrm{d\eta}=1$, by Jensen's inequality, we conclude that
	\begin{equation}\label{int3A}
		f'(t)+f(t)\geq \int_0^t(t-s)^{-\gamma}f^p(s)\,ds,\qquad\hbox{for all}\,\,t\in[0,T].
	\end{equation}
	Since $u_0\geq0$, $u_0\not\equiv0$ implies $f(0)>0$, \eqref{int3A}, $p\gamma\leq 1$, and Lemma \ref{lemma2} $\mathrm{(ii)}$ yield a contradiction. 
\end{proof}

\begin{proof}[The case $p\leq p_\gamma$] The proof is also by contradiction. Suppose that $u$ is a global mild solution to \eqref{1}, then $u$ is a global weak solution and therefore
 \begin{equation}\label{newweaksolution}
\int_{\mathbb{H}^n}u_0(\eta)\varphi(0,\eta)\,d\eta+\Gamma(\alpha)\int_0^T\int_{\mathbb{H}^n}I_{0|t}^\alpha(u^p)\varphi(t,\eta)\,d\eta\,dt=-\int_0^T\int_{\mathbb{H}^n}u\left[\partial_t\varphi+\Delta_{\mathbb{H}}\varphi \right]\,d\eta\,dt,
\end{equation}
 for all $T>0$ and all compactly supported $\varphi\in C^{2,1}([0,T]\times\mathbb{H}^n)$ such that $\varphi(T,\cdotp)=0$, where $\alpha:=1-\gamma\in(0,1)$.
 Let $R$ and $T$ be large parameters in $(0,\infty)$. Let us choose 
	$$\varphi(t,\eta)=D^\alpha_{t|T}\left(\widetilde{\varphi}(t,\eta)\right):=D^\alpha_{t|T}\left(w_1(t)\varphi_R(\eta)\right),\qquad \eta\in\mathbb{H}^{n},\,\,t\geq 0,$$
	where $w_1$ is given by \eqref{w1} and $\varphi_R$ be a test function as defined in \eqref{varphi_R}.  Consequently, for every $T>R$, 
  \begin{eqnarray}\label{weak1A}
&{}&D^\alpha_{t|T}w_1(0)\int_{\mathbb{H}^n}u_0(\eta)\varphi_R(\eta)\,d\eta
+\Gamma(\alpha)\int_0^T\int_{\mathbb{H}^n}I_{0|t}^\alpha(u^p)D^\alpha_{t|T}\left(\widetilde{\varphi}(t,\eta)\right)\,d\eta\,dt\nonumber
\\
&{}&=-\int_0^T\int_{\mathbb{H}^n}u(t,\eta)\partial_tD^\alpha_{t|T}\left(\widetilde{\varphi}(t,\eta)\right)\,d\eta\,dt-\int_0^T\int_{\mathbb{H}^n}u(t,\eta)\Delta_{\mathbb{H}}D^\alpha_{t|T}\left(\widetilde{\varphi}(t,\eta)\right)\,d\eta\,dt.
\end{eqnarray}
Therefore, using \eqref{IP}, \eqref{I4}, and \eqref{P1} in  \eqref{weak1A}, we obtain
  \begin{eqnarray*}
&{}&C\,T^{-\alpha}\int_{\mathbb{H}^n}u_0(\eta)\varphi_R(\eta)\,d\eta
+\Gamma(\alpha)\int_0^T\int_{\mathbb{H}^n}D^{\alpha}_{0|T}I_{0|t}^\alpha(u^p)\widetilde{\varphi}(t,\eta)\,d\eta\,dt
\\
&{}&=\int_0^T\int_{\mathbb{H}^n}u(t,\eta)D^{1+\alpha}_{t|T}\left(\widetilde{\varphi}(t,\eta)\right)\,d\eta\,dt-\int_0^T\int_{\mathbb{H}^n}u(t,\eta)\Delta_{\mathbb{H}}D^\alpha_{t|T}\left(\widetilde{\varphi}(t,\eta)\right)\,d\eta\,dt.
\end{eqnarray*}
Moreover, using \eqref{I3} and Lemma \eqref{lem:testfunction}, we may write
 \begin{eqnarray}\label{weak2A}
&{}&C\,T^{-\alpha}\int_{\mathbb{H}^n}u_0(\eta)\varphi_R(\eta)\,d\eta
+\Gamma(\alpha)\int_0^T\int_{\mathbb{H}^n}u^p(t,\eta)\widetilde{\varphi}(t,\eta)\,d\eta\,dt\nonumber
\\
&{}&=\int_0^T\int_{\mathbb{H}^n}u(t,\eta)D^{1+\alpha}_{t|T}\left(\widetilde{\varphi}(t,\eta)\right)\,d\eta\,dt-\int_0^T\int_{\mathbb{H}^n}u(t,\eta)\Delta_{\mathbb{H}}\varphi_R(\eta)D^\alpha_{t|T}w_1(t)\,d\eta\,dt\nonumber\\
&{}&\leq \int_0^T\int_{\mathbb{H}^n}u(t,\eta)|D^{1+\alpha}_{t|T}\left(\widetilde{\varphi}(t,\eta)\right)|\,d\eta\,dt+CR^{-1}\int_0^T\int_{\mathbb{H}^n}u(t,\eta)(\varphi_R^*)^{1/p}D^\alpha_{t|T}w_1(t)\,d\eta\,dt\nonumber\\
&{}&=:I_1+I_2.
\end{eqnarray}
 To estimate $I_1$, we rewrite the integrand by inserting the factor $\widetilde{\varphi}^{1/p}\widetilde{\varphi}^{-1/p}$ and then apply Young's inequality, which leads to
	\begin{align}\label{I1A}
		I_1\leqslant\frac{\Gamma(\alpha)}{4} \int_0^T\int_{\mathbb{H}^n}(u(t,\eta))^{p}\widetilde{\varphi}(t,\eta)\,\mathrm{d\eta}\,\mathrm{dt}+C\int_0^T\int_{\mathbb{H}^n}\varphi_R(\eta)(w_1(t))^{-p'/p}\left|D^{1+\alpha}_{t|T} w_1(t)\right|^{p'}\mathrm{d}\eta\,\mathrm{d}t.
	\end{align}
To proceed with the estimation of $I_2$,  we rewrite the integrand by introducing the factor $(w_1\varphi_R^*)^{1/p}(w_1\varphi_R^*)^{-1/p}$. This allows the application of Young's inequality, along with the bound $\varphi_R^*\leq \varphi_R$, leading to the estimate 
	\begin{equation}\label{I2A}
I_2\leqslant\frac{\Gamma(\alpha)}{4} \int_0^T\int_{\mathbb{H}^n}|u(t,\eta)|^{p}\widetilde{\varphi}(t,\eta)\,\mathrm{d\eta}\,\mathrm{dt}+C\,R^{-p'}\int_0^T\int_{\mathbb{H}^n} \mathbbm{1}_{[\frac{1}{2},1]}\left(\xi_R(\eta)\right)(w_1(t))^{-p'/p}\left|D^{\alpha}_{t|T} w_1(t)\right|^{p'}\mathrm{d}\eta\,\mathrm{d}t.
\end{equation}
	Combining \eqref{weak2A}, \eqref{I1A}, and \eqref{I2A}, we arrive at
	\begin{eqnarray*}\label{weak2A*}
		&{}&C\,T^{-\alpha}\int_{\mathbb{H}^n}u_0(\eta)\varphi_R(\eta)\,d\eta
+\frac{\Gamma(\alpha)}{2}\int_0^T\int_{\mathbb{H}^n}u^p(t,\eta)\widetilde{\varphi}(t,\eta)\,d\eta\,dt\nonumber\\
		&{}&\leq C\int_0^T\int_{\mathbb{H}^n}\varphi_R(\eta)(w_1(t))^{-p'/p}\left|D^{1+\alpha}_{t|T} w_1(t)\right|^{p'}\mathrm{d}\eta\,\mathrm{d}t\nonumber\\
		&{}&\quad+\,CR^{-p'}\int_{\mathbb{H}^n} \mathbbm{1}_{[\frac{1}{2},1]}\left(\xi_R(\eta)\right)\,\mathrm{d}\eta\int_0^T(w_1(t))^{-p'/p}\left|D^{\alpha}_{t|T} w_1(t)\right|^{p'}\,\mathrm{d}t.
			\end{eqnarray*}
	At this stage, by introducing the scaled variables 
	$$\widetilde{x}=\frac{x}{R^{\frac{1}{2}}},\qquad\widetilde{y}=\frac{y}{R^{\frac{1}{2}}},\qquad\widetilde{\tau}=\frac{\tau}{R},\qquad \widetilde{t}=\frac{t}{T},\qquad\hbox{for}\,\,\eta=(x,y,\tau)\in\mathbb{H}^n,\,\,t>0,$$
	and using Lemma \ref{L1} and $u_0\geq0$, we get
	\begin{equation}\label{weak3A}
		\int_0^T\int_{\mathbb{H}^n}u^p(t,\eta)\widetilde{\varphi}(t,\eta)\,d\eta\,dt\lesssim T^{1-(1+\alpha)p'}R^{\frac{Q}{2}}+\,T^{1-\alpha p'}R^{-p'+\frac{Q}{2}}.
	\end{equation}
	We consider two separate cases. For $p<p_\gamma$, choosing $R=T$, it follows from \eqref{weak3A} that
	\begin{equation}\label{weak4A}
			\int_0^T\int_{\mathbb{H}^n}u^p(t,\eta)\widetilde{\varphi}(t,\eta)\,d\eta\,dt\lesssim T^{-\delta}
	\end{equation}
	where $$\delta:=-1+(1+\alpha)p'-\frac{Q}{2}>0.$$ Passing to the limit in \eqref{weak4A} as $T\rightarrow\infty$, and using the monotone convergence theorem, we get
	$$0\leq \int_0^\infty\int_{\mathbb{H}^n}(u(t,\eta))^{p}\,\mathrm{d\eta}\,\mathrm{dt}\leq 0.$$
	Therefore $u\equiv 0$; contradiction. In order to get a contradiction in the case $p=p_\gamma$ too, we need to improve the estimate of $I_2$ by using H\"older's inequality instead of Young's inequality. Indeed,
		\begin{equation}\label{I2B}
I_2\lesssim R^{-1}\left(\int_0^T\int_{\mathbb{H}^n}(u(t,\eta))^{p}w_1(t)\varphi_R^*(\eta)\,\mathrm{d\eta}\,\mathrm{dt}\right)^{\frac{1}{p}}\left(\int_0^T\int_{\mathbb{H}^n} \mathbbm{1}_{[\frac{1}{2},1]}\left(\xi_R(\eta)\right)(w_1(t))^{-p'/p}\left|D^{\alpha}_{t|T} w_1(t)\right|^{p'}\mathrm{d}\eta\,\mathrm{d}t\right)^{\frac{1}{p'}}.
\end{equation}
Substituting \eqref{I1A} and \eqref{I2B} into \eqref{weak2A}, and noting that $u_0\geq0$, we arrive at
\begin{eqnarray}\label{weak2B}
		&{}&\int_0^T\int_{\mathbb{H}^n}u^p(t,\eta)\widetilde{\varphi}(t,\eta)\,d\eta\,dt\\
		&{}&\lesssim \int_0^T\int_{\mathbb{H}^n}\varphi_R(\eta)(w_1(t))^{-p'/p}\left|D^{1+\alpha}_{t|T} w_1(t)\right|^{p'}\mathrm{d}\eta\,\mathrm{d}t\nonumber\\
		&{}&+ R^{-1}\left(\int_0^T\int_{\mathbb{H}^n}(u(t,\eta))^{p}w_1(t)\varphi_R^*(\eta)\,\mathrm{d\eta}\,\mathrm{dt}\right)^{\frac{1}{p}}\left(\int_0^T\int_{\mathbb{H}^n} \mathbbm{1}_{[\frac{1}{2},1]}\left(\xi_R(\eta)\right)(w_1(t))^{-p'/p}\left|D^{\alpha}_{t|T} w_1(t)\right|^{p'}\mathrm{d}\eta\,\mathrm{d}t\right)^{\frac{1}{p'}}.\nonumber
					\end{eqnarray}
			By applying the same change of variables as before and by setting $R=TK^{-1}$, where $K\ge 1$ and $K<T$, so that $T$ and $K$ cannot simultaneously tend to infinity, and considering the fact that $p=p_\gamma$, we obtain
			\begin{equation}\label{weak2C}
	\int_0^T\int_{\mathbb{H}^n}u^p(t,\eta)\widetilde{\varphi}(t,\eta)\,d\eta\,dt\lesssim K^{-\frac{Q}{2}} +\,K^{1-\frac{Q}{2p'}}\left(\int_0^T\int_{\mathbb{H}^n}(u(t,\eta))^{p}w_1(t)\varphi_R^*(\eta)\,\mathrm{d\eta}\,\mathrm{dt}\right)^{\frac{1}{p}}.
	\end{equation}
	On the other hand, from \eqref{weak4A} as $T\rightarrow\infty$, and taking into account that $p=p_\gamma$, it follows that
	\begin{equation}\label{regularityA}
		u\in L^p((0,\infty),L^p(\mathbb{H}^{n})).
	\end{equation}
	Taking the limit as $T\rightarrow\infty$ in \eqref{weak2C}, and applying \eqref{regularityA} along with the Lebesgue dominated convergence theorem, we conclude that
	$$\int_0^\infty\int_{\mathbb{H}^n}u^p(t,\eta)\,d\eta\,dt\lesssim K^{-\frac{Q}{2}}.$$
	Therefore, taking a sufficiently large $K$ we obtain the desired contradiction. The proof is complete.
\end{proof}
\section{Global existence}\label{sec5}
We now proceed to prove the global existence result stated in Theorem \ref{Blowup-global}-$(\mathrm{ii})$.
\begin{proof}[Proof of Theorem \ref{Blowup-global}-$(\mathrm{ii})$] Since $p>p_c$, it is possible to choose a positive constant $q>0$ such that
\begin{equation}\label{estiA}
    \frac{2-\gamma}{p-1}-\frac{1}{p}<\frac{Q}{2
    q}<\frac{1}{p-1}\quad \text{with} \quad q\geq p.
\end{equation}
From this, it follows that 
\begin{equation}\label{estiB}
    q>\frac{Q(p-1)}{2}>q_{\mathrm{sc}}>1.
\end{equation}
We then define
\begin{equation}\label{estiC}
    \beta:=\frac{Q}{2 q_{\mathrm{sc}}}-\frac{Q}{2
    q}=\frac{2-\gamma}{p-1}-\frac{Q}{2
    q}.
\end{equation}
Therefore, based on relations (\ref{estiA})-(\ref{estiC}), the following relations hold
\begin{equation}\label{estiD}
    \beta>\frac{1-\gamma}{p-1}>0,\quad \frac{Q(p-1)}{2
    q}+(p-1)\beta+\gamma=2,\quad\hbox{and}\quad p\beta<1.
\end{equation}
 Since $u_0\in L^{q_{\mathrm{sc}}}(\mathbb{H}^{n})$, applying Lemma \ref{Lp-Lqestimate} with $q>q_{\mathrm{sc}}$, and using \eqref{estiC}, we
get
\begin{equation}\label{estiE}
    \sup_{t>0}t^\beta\|S_{\mathbb{H}}(t)u_0\|_{L^q}\leq
   C \|u_0\|_{L^{q_{\mathrm{sc}}}}=:\rho<\infty.
\end{equation}
Set
\begin{equation}\label{estiF}
    \mathbb{X}:=\left\{u\in
    L^\infty((0,\infty),L^q(\mathbb{H}^{n}));\;\sup_{t>0}t^\beta\|u(t)\|_{L^q}\leq\delta\right\},
\end{equation}
where $\delta>0$ is chosen to be sufficiently small. For $u,v\in \mathbb{X}$, we define the metric
\begin{equation}\label{estiG}
    d_{\mathbb{X}}(u,v):=\sup_{t>0}t^\beta\|u(t)-v(t)\|_{L^q}.
\end{equation}
It is straightforward to verify that $(\mathbb{X},d)$ is a nonempty complete metric space. For $u\in \mathbb{X}$, we define the mapping $\Phi(u)$ by
\begin{equation}\label{estiH}
    \Phi(u)(t):=S_{\mathbb{H}}(t)u_0 + \int_{0}^{t}S_{\mathbb{H}}(t-s) \int_0^s(s-\tau)^{-\gamma}(u(\tau))^{p} \, \mathrm{d}\tau \,\mathrm{d}s,\quad \text{for all}\,\, t\geq0.
\end{equation}
Let us now verify that the operator $\Phi:  \mathbb{X} \rightarrow \mathbb{X}$. By employing the assumption $q\geq p$, inequalities \eqref{estiE} and \eqref{estiF}, along with Lemma \ref{Lp-Lqestimate}, we derive the following estimate for any $u \in  \mathbb{X}$, 
\begin{eqnarray}\label{estiI}
    t^\beta\|\Phi(u)(t)\|_{L^q}&\leq& \rho+\,Ct^\beta\int_0^t(t-s)^{-\frac{Q(p-1)}{2q}}
    \int_0^s(s-\sigma)^{-\gamma}\|u(\sigma)\|^{p}_{L^{q}}\,d\sigma\,ds\nonumber\\
    &\leq&\rho+C\delta^{p} t^\beta\int_0^t\int_0^s(t-s)^{-\frac{Q(p-1)}{2
    q}}(s-\sigma)^{-\gamma}\sigma^{-\beta p}\,d\sigma\,ds.
\end{eqnarray}
Now, using the parameter constraints in \eqref{estiA} and \eqref{estiD}, together with the condition $p\beta<1$, the double integral becomes
\begin{equation}\label{estiJ}\begin{split}
 \int_0^t\int_0^s(t-s)^{-\frac{Q(p-1)}{2
    q}}(s-\sigma)^{-\gamma}\sigma^{-\beta p}\,d\sigma\,ds&= C\int_0^t(t-s)^{-\frac{Q(p-1)}{\beta
    q}}s^{1-\gamma-\beta p}\,ds\\&=C t^{-\beta},\end{split}
\end{equation}
valid for all $t\geq0$. It then follows from estimates \eqref{estiI} and \eqref{estiJ} that
\begin{equation}\label{estiK}
    t^\beta\|\Phi(u)(t)\|_{L^q}\leq \rho+C\delta^p.
\end{equation}
Hence, if $\rho$ and $\delta$ are chosen sufficiently small such that $\rho+C\delta^p\leq\delta$, it follows that $\Phi(u)\in\mathbb{X}$, i.e., $\Phi: \mathbb{X}\rightarrow \mathbb{X}$. A similar argument shows that, under the same smallness assumptions on $\rho$ and $\delta$, the operator $\Phi$ is a strict contraction. Therefore, it has a unique fixed point $u\in \mathbb{X}$ which corresponds to a mild solution of equation \eqref{1}.

We now aim to prove that $u\in C([0,\infty),C_0(\mathbb{H}^{n}))$. We begin by showing that $u\in C([0,T],C_0(\mathbb{H}^{n}))$ for some sufficiently small $T>0$. Indeed, the previous argument ensures uniqueness in the space $ \mathbb{X}_T,$ where for any $T>0,$
$$
 \mathbb{X}_T:=\left\{u\in
    L^\infty((0,T),L^q(\mathbb{H}^{n}));\;\sup_{0<t<T}t^\beta\|u(t)\|_{L^q}\leq\delta\right\}.
$$
Let $\tilde{u}$ denote the local solution of \eqref{1} established in Theorem \ref{Local}. From inequality \eqref{estiB}, we know that $p_{\mathrm{sc}}<q<\infty$, which implies 
$$u_0\in C_0(\mathbb{H}^{n})\cap L^{q_{\mathrm{sc}}}(\mathbb{H}^{n})\subset C_0(\mathbb{H}^{n})\cap L^{q}(\mathbb{H}^{n}).$$ 
Thus,  Theorem \ref{Local} guarantees that $\tilde{u}\in C([0,T_{\max}),C_0(\mathbb{H}^{n})\cap L^q(\mathbb{H}^{n}))$. Consequently, for sufficiently small $T>0$, we have
$$\sup\limits_{t\in(0,T)}t^\beta\|\tilde{u}(t)\|_{L^q}\leq\delta.$$
By the uniqueness of solutions, it follows that $u=\tilde{u}$ on
$[0,T]$, leading to the conclusion that $u\in C([0,T],C_0(\mathbb{H}^{n}))$.

To extend the regularity to $[T,\infty)$, we employ a bootstrap argument. For $t>T,$ we express $u(t)$ as
\begin{eqnarray*}
  u(t)-S_{\mathbb{H}}(t)u_0 &=&
  \int_0^tS_{\mathbb{H}}(t-s)\int_0^T
    (s-\sigma)^{-\gamma}(u(\sigma))^p\,d\sigma\,ds+\int_0^tS_{\mathbb{H}}(t-s)\int_T^s(s-\sigma)^{-\gamma}(u(\sigma))^p\,d\sigma\,ds\\
   &\equiv& I_1(t)+I_2(t).
\end{eqnarray*}
Since $u\in C([0,T],C_0(\mathbb{H}^{n}))$, we immediately have $I_1\in
C([T,\infty),C_0(\mathbb{H}^{n})).$ Furthermore, by similar estimates used in the fixed-point argument and noting that $$t^{-\beta}\leq
T^{-\beta}<\infty,$$ we also get $I_1\in C([T,\infty),L^q(\mathbb{H}^{n}))$.
Next, from \eqref{estiB}, we observe that $q>Q(p-1)/2$, which guarantees the existence of some $r\in(q,\infty]$ such that
\begin{equation}\label{estiL}
\frac{Q}{2}\left(\frac{p}{q}-\frac{1}{r}\right)<1.
\end{equation}
For $T<s<t$, since $u\in
L^\infty((0,\infty),L^q(\mathbb{H}^{n})),$ we obtain
$$u^p\in L^\infty((T,s),L^{\frac{q}{p}}(\mathbb{H}^{n})).$$
 Thus, applying Lemma \ref{Lp-Lqestimate} and condition \eqref{estiL}, we deduce that $I_2\in
C([T,\infty),L^r(\mathbb{H}^{n}))$.
Since the terms $S_{\mathbb{H}}(t)u_0$ and $I_1$ belong to
$$C([T,\infty),C_0(\mathbb{H}^{n}))\cap
C([T,\infty),L^q(\mathbb{H}^{n})),$$ we conclude that $u\in
C([T,\infty),L^r(\mathbb{H}^{n}))$. Iterating this procedure a finite
number of times, eventually leads to $u\in
C([T,\infty),C_0(\mathbb{H}^{n})).$ 
\end{proof}

\section{Lifespan estimates}\label{sec6}
\begin{proof}[Proof of Theorem \ref{Lifespan}]
By \eqref{weak2A}-\eqref{weak3A}, and setting $R=T$, it follows that
$$\varepsilon\,T^{-\alpha}\int_{\mathbb{H}^n}u_0(\eta)\varphi_R(\eta)\,d\eta\lesssim T^{1-(2-\gamma)p'+\frac{Q}{2}}.$$
that is,
\begin{equation}\label{corC}
\varepsilon\int_{\mathbb{H}^n}u_0(\eta)\varphi_R(\eta)\,d\eta\lesssim T^{(2-\gamma)(1-p')+\frac{Q}{2}}.
\end{equation}
Furthermore, by applying the dominated convergence theorem, we have
$$\lim_{R \to \infty} \int_{\mathbb{H}^n}u_0(\eta)\varphi_R(\eta)\,d\eta=\int_{\mathbb{H}^n} u_0(\eta)\,d\eta>0.$$
This implies that
$$  1\lesssim \int_{\mathbb{H}^n}u_0(\eta)\varphi_R(\eta)\,d\eta,\qquad\text{for}\,\, T \geq T_1,$$
 where \( T_1 \) is a suitably large constant. Hence, for all \( T \geq T_1 \), we conclude that
$$\varepsilon\lesssim T^{(2-\gamma)(1-p')+\frac{Q}{2}}.$$
As $p < p_{\mathrm{sc}}$, the power for $T$ is negative in the last estimate. Thus,
$$T\lesssim \varepsilon^{-\left(\frac{2-\gamma}{p-1}-\frac{Q}{2}\right)^{-1}}.$$
Note that we have assumed without loss of generality that \( T\geq T_1 \). Indeed, if \( T \leq T_1 \), then for sufficiently small \( \varepsilon > 0 \), the inequality $$T \lesssim \varepsilon^{-\left(\frac{2-\gamma}{p-1} - \frac{Q}{2}\right)^{-1}}$$ is trivially satisfied. Since the statement holds for any $T\in(0,T_\varepsilon)$, the same conclusion is true for $T_\varepsilon$.
The proof is complete.
\end{proof}
\begin{proof}[Proof of Theorem \ref{Lifespan-2}]
By exploiting  \eqref{corA} and \eqref{varphi_R}, we have
\begin{equation}\label{corB}\begin{split}
 \varepsilon \int_{\mathbb{H}^n}u_0(\eta)\varphi_T(\eta)\,d\eta&\ge  \varepsilon \int_{\frac{T}{4}\leq|\eta|^2\leq \frac{T}{2}}u_0(\eta)\,d\eta\\&\ge  \varepsilon \int_{\frac{T}{4}\leq|\eta|^2\leq \frac{T}{2}} \frac{1}{(1+|\eta|)^{\kappa}}\,d\eta
 \\&\geq C\varepsilon\,T^{\frac{Q-\kappa}{2}},\end{split}\end{equation}
for any $T>1$.  By combining \eqref{corC} and \eqref{corB}, we conclude that
 $$
\varepsilon\,T^{\frac{Q-\kappa}{2}}\lesssim T^{(2-\gamma)(1-p')+\frac{Q}{2}}, \qquad  \hbox{for all}\,\,T>1,
$$
that is,
$$
\varepsilon\lesssim T^{\frac{\kappa}{2}-(2-\gamma)(p'-1)}, \qquad  \hbox{for all}\,\,T>1,
$$
  which leads, using $\kappa< 2(2-\gamma)/(p-1)$, to derive that there exists a constant $\varepsilon_0> 0$ such
that $T_{\varepsilon}$ satisfies
$$
 T_{\varepsilon}\le
 C\varepsilon^{-\left(\frac{2-\gamma}{p-1}-\frac{\kappa}{2}\right)^{-1}},\quad\hbox{ for all}\,\,\varepsilon\leq\varepsilon_0.
 $$ The proof is complete.
\end{proof}


\section*{Acknowledgment}
Ahmad Z. Fino is supported by the Research Group Unit, College of Engineering and Technology, American University of the Middle East. Z. Sabbagh would like to express her gratitude to A. Z. Fino for hosting her in Kuwait where she initiated this work. Berikbol T. Torebek is supported by the Science Committee of the Ministry of Education and Science of the Republic of Kazakhstan (Grant No. AP23483960) and by the Methusalem programme of the Ghent University Special Research Fund (BOF) (Grant number 01M01021).

\section*{Declaration of competing interest}
	The Authors declare that there is no conflict of interest

\section*{Data Availability Statements} The manuscript has no associated data.

\end{document}